\documentclass[a4paper,12pt,oneside,headsepline]{scrartcl}

\usepackage{etoolbox}
\usepackage{ifdraft}
\usepackage{verbatim}

\usepackage[utf8]{inputenc}
\usepackage[T1]{fontenc}


\usepackage{lmodern}
\usepackage{fourier}

\usepackage{amssymb, amsfonts} 

\usepackage{mathrsfs} 
\usepackage{dsfont}
\usepackage[usenames,dvipsnames]{xcolor}
\usepackage{lettrine}
\usepackage{url}


\usepackage{stmaryrd}


\usepackage[english]{babel}


\usepackage[draft=false]{scrlayer-scrpage}

\usepackage{wrapfig}
\usepackage{footmisc}
\usepackage{needspace}

\usepackage{amsmath}
\usepackage{mathtools}
\usepackage{nccmath}
\usepackage[thmmarks, amsmath, amsthm]{ntheorem}


\providebool{nobiblatex}
\boolfalse{nobiblatex}
\ifbool{nobiblatex}{%
}{%
  \usepackage[style=numeric-comp,giveninits,url=false,sortcites=true,maxbibnames=99]{biblatex}
  \bibliography{bibliography.bib}
}

\usepackage[pdftex,final,
            pdfborder={0 0 0}, colorlinks=true,
            linkcolor=BrickRed, citecolor=ForestGreen, urlcolor=RoyalBlue]{hyperref}


\ifdraft{%
\usepackage[textsize=scriptsize,colorinlistoftodos]{todonotes}
\usepackage{lineno}
}{}

\usepackage{booktabs}
\usepackage[inline]{enumitem}
\usepackage{float}
\usepackage{graphicx}
\usepackage[final]{listings}
\usepackage{multicol}
\usepackage{scrtime}
\usepackage{tikz}
\usetikzlibrary{matrix,arrows,positioning}

\ifdraft{\date{\today}}{\date{}}


\linespread{1.05}

\ifdraft{%
\linenumbers
\newcommand*\patchAmsMathEnvironmentForLineno[1]{%
  \expandafter\let\csname old#1\expandafter\endcsname\csname #1\endcsname
  \expandafter\let\csname oldend#1\expandafter\endcsname\csname end#1\endcsname
  \renewenvironment{#1}%
     {\linenomath\csname old#1\endcsname}%
     {\csname oldend#1\endcsname\endlinenomath}}%
\newcommand*\patchBothAmsMathEnvironmentsForLineno[1]{%
  \patchAmsMathEnvironmentForLineno{#1}%
  \patchAmsMathEnvironmentForLineno{#1*}}%
\AtBeginDocument{%
\patchBothAmsMathEnvironmentsForLineno{equation}%
\patchBothAmsMathEnvironmentsForLineno{align}%
\patchBothAmsMathEnvironmentsForLineno{flalign}%
\patchBothAmsMathEnvironmentsForLineno{alignat}%
\patchBothAmsMathEnvironmentsForLineno{gather}%
\patchBothAmsMathEnvironmentsForLineno{multline}%
}}





\KOMAoptions{DIV=10}


\pagestyle{scrheadings}
\clearmainofpairofpagestyles
\automark[section]{subsection}

\renewcommand{\subsectionmark}[1]{}

\cfoot[--\,\pagemark\,--]{--\,\pagemark\,--}
\lohead{{\small \headertitle}}
\rohead{{\small \headerauthors}}


\RedeclareSectionCommand[%
  font=\Large\sffamily\bfseries,%
  beforeskip=1\baselineskip,%
  afterskip=0.5\baselineskip,%
  indent=0em%
  ]{section}

\RedeclareSectionCommands[%
  font=\normalfont\bfseries,%
  afterskip=-1em%
  ]{subsection,subsubsection}

\RedeclareSectionCommands[%
  font=\normalfont\itshape,%
  afterskip=-1em,%
  indent=0pt,
  ]{paragraph}



\ifbool{nobiblatex}{}{%
  \AtEveryBibitem{\clearfield{doi}}
  \AtEveryBibitem{\clearfield{isbn}}
  \AtEveryBibitem{\clearfield{issn}}
  \AtEveryBibitem{\clearfield{pages}}
  \AtEveryBibitem{\clearlist{language}}

  \setlength\bibitemsep{1pt}
  
  \renewbibmacro*{in:}{}
  \DeclareFieldFormat
    [article,inbook,incollection,inproceedings,patent,thesis,unpublished]
    {title}{#1}
}


\newenvironment{enumerateroman}{
\begin{enumerate}[label=(\roman*), leftmargin=0pt,labelindent=2em,itemindent=!]
}{
\end{enumerate}
}

\newenvironment{enumeratearabic*}{
\begin{enumerate*}[label=(\arabic*)] 
}{
\end{enumerate*}
}

\newenvironment{enumerateroman*}{
\begin{enumerate*}[label=(\roman*)] 
}{
\end{enumerate*}
}


\numberwithin{equation}{section}

\theoremnumbering{arabic}
\newtheorem{theoremcounter}{theoremcounter}[section]
\theoremnumbering{Alph}
\newtheorem{maintheoremcounter}{maintheoremcounter}

\theoremstyle{plain}

\newtheorem{corollary}[theoremcounter]{Corollary}
\newtheorem{lemma}[theoremcounter]{Lemma}

\newtheorem{proposition}[theoremcounter]{Proposition}
\newtheorem{theorem}[theoremcounter]{Theorem}

\theoremstyle{plain}

\newtheorem{maintheorem}[maintheoremcounter]{Theorem}
\newtheorem{mainclassification}[maintheoremcounter]{Classification}

\theoremstyle{definition}

\newtheorem{definition}[theoremcounter]{Definition}
\newtheorem{example}[theoremcounter]{Example}

\newtheorem{mainexample}[maintheoremcounter]{Example}

\theoremstyle{remark}

\newtheorem{remark}[theoremcounter]{Remark}

\theoremstyle{nonumberremark}

\newtheorem{mainremark}{Remark}

\newenvironment{mainremarkenumerate}
{%
\mainremark
\enumeratearabic
}{%
\endenumeratearabic
\endmainremark
}%




\lstdefinelanguage{julia}
{%
  keywordsprefix=\@,
  morekeywords={%
    error,throw,assert,
    begin,end,
    if,elseif,else,
    try,catch,
    while,for,do,break,continue,
    function,return,where,
    type,mutable,abstract,const,
    in,
    macro,
    quote,
    let,
    ccall,
    using,module,import,export,importall,
    local,global,
    Any,Nothing,None,Nullable,
  },
  sensitive=true,
  morecomment=[l]{\#},
  morestring=[b]",
}

\lstset
{%
  language=julia,
  numbers=left,
  stepnumber=1,
  numberfirstline=true, 
  numberblanklines=true, 
  numbersep=10pt, 
  basicstyle=\small\ttfamily,
  keywordstyle=\bfseries\color{BlueViolet},
  commentstyle=\color{OliveGreen},
  stringstyle=\color{Sepia},
  numberstyle=\tiny\ttfamily\color{Gray},
  identifierstyle=,
  breaklines=false,
  upquote=false,
  showstringspaces=false,
  showtabs=false,
  tabsize=1,
  inputencoding=utf8,
  extendedchars=true,
  literate={⊗}{{$\otimes$}}1,
}







\newcommand{\tx}{\ensuremath{\text}}
\newcommand{\thdash}{\nbd th}
\newcommand{\nbd}{\nobreakdash-\hspace{0pt}}

\ifdraft{
 \newcommand{\texpdf}[2]{#1}
}{
 \newcommand{\texpdf}[2]{\texorpdfstring{#1}{#2}}
}

\newcommand{\minwidthmathbox}[2]{%
  \mathmakebox[{\ifdim#1<\width\width\else#1\fi}]{#2}%
}


\newcommand{\tbf}{\bfseries}




\newcommand{\cF}{\ensuremath{\mathcal{F}}}


\newcommand{\frake}{\ensuremath{\mathfrak{e}}}

\newcommand{\frakk}{\ensuremath{\mathfrak{k}}}

\newcommand{\frakz}{\ensuremath{\mathfrak{z}}}

\newcommand{\frakA}{\ensuremath{\mathfrak{A}}}
\newcommand{\frakB}{\ensuremath{\mathfrak{B}}}


\newcommand{\rmd}{\ensuremath{\mathrm{d}}}


\newcommand{\rmF}{\ensuremath{\mathrm{F}}}

\newcommand{\rmH}{\ensuremath{\mathrm{H}}}

\newcommand{\rmL}{\ensuremath{\mathrm{L}}}

\newcommand{\rmR}{\ensuremath{\mathrm{R}}}



\newcommand{\wtd}{\widetilde}
\newcommand{\ov}{\overline}




\NewCommandCopy{\rightarroworig}{\rightarrow}
\renewcommand{\rightarrow}
  {\DOTSB\protect\relbar\mspace{-9.7mu}\rightarroworig}
\renewcommand{\hookrightarrow}
  {\DOTSB\protect\lhook\joinrel\mspace{-0.1mu}\relbar\mspace{-11.9mu}\rightarroworig}
\renewcommand{\twoheadrightarrow}
  {\DOTSB\protect\rightarroworig\mspace{-15mu}\rightarroworig}

\NewCommandCopy{\leftarroworig}{\leftarrow}
\renewcommand{\leftarrow}
  {\DOTSB\protect\leftarroworig\mspace{-9.7mu}\relbar}

\renewcommand{\longrightarrow}
  {\DOTSB\protect\relbar\joinrel\mspace{-0.2mu}\relbar\mspace{-9.5mu}\rightarroworig}

\newcommand{\xtwoheadrightarrow}[1]
  {\DOTSB\protect\xrightarrow{#1}\mspace{-15mu}\rightarroworig}

\renewcommand{\xlongrightarrow}[1]
  {\xrightarrow{\minwidthmathbox{1.256em}{#1}}}

\newcommand{\xlongmapsto}[1]{\DOTSB\mapstochar\xlongrightarrow{#1}}

\newcommand{\ra}{\ensuremath{\rightarrow}}
\newcommand{\hra}{\ensuremath{\hookrightarrow}}
\newcommand{\thra}{\ensuremath{\twoheadrightarrow}}

\newcommand{\lra}{\ensuremath{\longrightarrow}}

\newcommand{\mto}{\ensuremath{\mapsto}}
\newcommand{\lmto}{\ensuremath{\longmapsto}}

\renewcommand{\Re}{\ensuremath{\mathrm{Re}}}
\renewcommand{\Im}{\ensuremath{\mathrm{Im}}}


\renewcommand{\pmod}[1]{\ensuremath{\;(\mathrm{mod}\, #1)}}

\newcommand{\sgn}{\ensuremath{\mathrm{sgn}}}

\newcommand{\Hom}{\ensuremath{\mathrm{Hom}}}

\newenvironment{psmatrix}{\left(\begin{smallmatrix}}{\end{smallmatrix}\right)}
\newcommand{\Mat}[1]{\ensuremath{\mathrm{Mat}_{#1}}}



\newcommand{\lspan}{\ensuremath{\mathop{\mathrm{span}}}}



\newcommand{\ZZ}{\ensuremath{\mathbb{Z}}}

\newcommand{\RR}{\ensuremath{\mathbb{R}}}
\newcommand{\CC}{\ensuremath{\mathbb{C}}}

\newcommand{\GL}[1]{\ensuremath{\mathrm{GL}_{#1}}}

\newcommand{\SL}[1]{\ensuremath{\mathrm{SL}_{#1}}}
\newcommand{\SO}[1]{\ensuremath{\mathrm{SO}_{#1}}}


\newcommand{\HS}{\mathbb{H}}

\newcommand{\Ga}{\Gamma}
\newcommand{\ga}{\gamma}

\newcommand{\cok}{\mathrm{cok}}

\newcommand{\Ad}{\mathrm{Ad}}
\newcommand{\Lie}{\mathrm{Lie}}

\newcommand{\Xm}{X_-}
\newcommand{\Xz}{X_\ast}
\newcommand{\Xp}{X_+}
\newcommand{\Ym}{Y_-}
\newcommand{\Yz}{Y_\ast}
\newcommand{\Yp}{Y_+}

\newcommand{\Pol}{\mathrm{Pol}}
\newcommand{\End}{\mathrm{End}}

\newcommand{\tdtr}{\wtd{\mathrm{tr}}}

\newcommand{\img}{\mathrm{img}}


\DeclareMathOperator{\Rep}{\mathsf{Rep}}
\DeclareMathOperator{\HC}{\mathsf{HC}}
\newcommand{\lieg}{\mathfrak{g}}
\newcommand{\idm}{\mathfrak{m}}

\newcommand{\dD}{\mathfrak{D}}
\newcommand{\dA}{\mathfrak{A}}
\newcommand{\dB}{\mathfrak{B}}
\newcommand{\kA}{\mathcal A}

\newcommand{\EE}{\mathbb{E}}
\newcommand{\HH}{\mathbb{H}}

\newcommand{\llbrace}{(\!(}
\newcommand{\rrbrace}{)\!)}

\newcommand{\lar}{\longrightarrow}

\newcommand{\headertitle}{{\normalfont%
  A classification of polyharmonic Maa\ss{} forms
}}
\newcommand{\headerauthors}{%
  C.~Alfes-Neumann,
  I.~Burban,
  M.~Raum%
}
\title{%
  A classification of\\polyharmonic Maa\ss{} forms\\via quiver representations
}
\author{%
Claudia Alfes-Neumann%
\thanks{The author was partially supported by the Daimler and Benz Foundation, the Klaus Tschira Boost Fund and the German
Research Foundation SFB-TRR 358/1 2023 -- 491392403.}
\and%
Igor Burban%
\thanks{The author was partially supported by the German
Research Foundation SFB-TRR 358/1 2023 -- 491392403.}
\and
Martin Raum%
\thanks{The author was partially supported by Vetenskapsr\aa det Grant~2019-03551.}%
}
\ifdraft{%
\date{\today\ at\ \thistime}
}

\begin{document}

\thispagestyle{scrplain}
\begingroup
\deffootnote[1em]{1.5em}{1em}{\thefootnotemark}
\maketitle
\endgroup


\begin{abstract}
\small
\noindent
{\tbf Abstract:}
We give a classification of the Harish-Chandra modules generated by the pullback to~$\SL{2}(\RR)$ of \emph{poly}harmonic Maa\ss{} forms for congruence subgroups of~$\SL{2}(\ZZ)$ with exponential growth allowed at the cusps. This extends results of Bringmann--Kudla in the harmonic case. While in the harmonic setting there are nine cases, our classification comprises ten; A new case arises in weights $k > 1$. 
To obtain the classification we introduce quiver representations into the topic and show that those associated with polyharmonic Maa\ss{} forms are cyclic, indecomposable representations of the two-cyclic or the Gelfand quiver. 
 A classification of these transfers to a classification of polyharmonic weak Maa\ss{} forms. To realize all possible cases of Harish-Chandra modules we develop a theory of weight shifts for Taylor coefficients of vector-valued spectral families.
We provide a comprehensive computer implementation of this theory, which allows us to provide explicit examples.
\\[.3\baselineskip]
\noindent
\textsf{\textbf{%
  polyharmonic Maa\ss{} forms%
}}%
\noindent
\ {\tiny$\blacksquare$}\ %
\textsf{\textbf{%
  Harish-Chandra modules%
}}%
\noindent
\ {\tiny$\blacksquare$}\ %
\textsf{\textbf{%
  Gelfand quiver%
}}%
\noindent
\ {\tiny$\blacksquare$}\ %
\textsf{\textbf{%
  mock modular forms%
}}%
\noindent
\ {\tiny$\blacksquare$}\ %
\textsf{\textbf{%
  Kronecker limit formula%
}}%
\\[.2\baselineskip]
\noindent
\textsf{\textbf{%
  MSC Primary:
  11F37, 17B10%
}}%
\ {\tiny$\blacksquare$}\ %
\textsf{\textbf{%
  MSC Secondary:
  11F72, 16G20%
}}
\end{abstract}

\Needspace*{4em}
\addcontentsline{toc}{section}{Introduction}
\markright{Introduction}
\lettrine[lines=2,nindent=.2em]{\tbf H}{armonic} weak Maa\ss{} forms have proved to be immensely useful in number theory and related areas such as arithmetic geometry, combinatorics, and mathematical physics. Numerous applications were showcased by Bringmann--Fol\-som--Ono--Rolen~\cite{bringmann-folsom-ono-rolen-2018}, often rooted in generating series that are ``completed'' to produce harmonic weak Maa\ss{} forms. Special highlights are the confirmation of the Andrews--Dra\-go\-nette Conjecture~\cite{bringmann-ono-2006}, incoherent Eisenstein series in the Kudla Program~\cite{kudla-rapoport-yang-1999}, and Mathieu Moonshine~\cite{eguchi-ooguri-tachikawa-2011,gannon-2016} associated with~$K3$--surfaces. 
As work by Duke, \.Imamo\u{g}lu, and T\'oth~\cite{dit2013} shows \emph{poly}harmonic weak Maa\ss{} forms play an evenly important role in number theory. Moreover, these forms appear in string theory, especially via the theory of modular graph functions~\cite{green-vanhove-2000}, and in the correspond mathematical theory of iterated period integrals on the moduli space of marked genus-$1$ curves~\cite{brown-2017-preprint,brown-2018}.

The study of harmonic weak Maa\ss{} forms from a representation theoretic perspective was initiated by Bringmann and Kudla~\cite{BK}. They provided a classification of the Harish-Chandra modules generated by the pullback to $\mathrm{SL}_2(\RR)$ of harmonic weak Maa\ss{} forms of integral weight. 

In the present work we study polyharmonic (weak) Maa\ss{} forms, i.e.\@ forms that vanish under a power of the Laplace operator, transform like modular forms, and have at most exponential growth at the cusps. Here, the weight $k$ Laplace operator is defined as
\begin{gather*}
\Delta_k = - \rmR_{k-2}\rmL_k, \quad \tx{where\ } \rmR_{k} = 2 i \partial_\tau +ky^{-1}, \, \rmL_k=-2iy^2\partial_{\ov{\tau}}
\tx{.}
\end{gather*}
We call a polyharmonic Maa\ss{} form~$f$ of weight~$k$ satisfying $\Delta^{d+1}_k f=0$ and $\Delta^d_k f\neq 0$ polyharmonic of~\emph{exact depth~$d$} (compare Definition \ref{def:polyharmonic_maass_forms}).
 A classical example of such a form, which appears already in the Kronecker limit formula, is the function
\begin{gather*}
  - \tfrac{1}{6}\, \mathrm{log}\big( y^{12} \big| \eta(\tau) \big|^{48} \big)
\tx{,}
\end{gather*}
where $\eta(\tau) = \exp(\pi i \tau \slash 12)\, \prod_{n=1}^\infty (1-\exp(2\pi i n \tau))$.

\subsection*{The classification}

Our first main result is the classification of Harish-Chandra modules corresponding to polyharmonic Maa\ss{} forms of integral weight. As a special case we recover the classification for harmonic Maa\ss{} forms obtained by Bringmann and Kudla. Harish-Chandra modules associated with polyharmonic Maa\ss{} forms are formally described in Section~\ref{ssec:automorphic_forms}. Informally, they capture all~$\SL{2}(\RR)$-covariant differential equations satisfied by a given form.

\begin{mainclassification}%
\label{mainthm:classification}
In Sections~\ref{ssec:classification_representation_labels} and~\ref{ssec:classification_BK_labels} we give a classification comprising\/~$10$ cases of Harish-Chandra modules associated with weight-$k$ polyharmonic weak Maa\ss{} forms of exact depth~$d$: Cases~Ia--d for~$k < 1$, Cases~IIa--b for~$k = 1$, and Cases~IIIa--d for~$k > 1$. The classification is given in terms of the vanishing and non-vanishing of the following functions:
\begin{center}
\begin{tabular}{@{}lclcl@{}}
  $k < 1$ && $k = 1$ && $k > 1$
\\
\cmidrule{1-1}
\cmidrule{3-3}
\cmidrule{5-5}
   $\rmL_k \Delta_k^d\, f \stackrel{?}{=} 0$, $\rmR_k^{1-k} \Delta_k^d\, f \stackrel{?}{=} 0$
&& $\rmL_k \Delta_k^d\, f \stackrel{?}{=} 0$
&& $\rmL^k \Delta_k^{d-1}\, f \stackrel{?}{=} 0$, $\rmL_k \Delta_k^d\, f \stackrel{?}{=} 0$, $\rmL_k^k \Delta_k^d\, f \stackrel{?}{=} 0$.
\end{tabular}
\end{center}
\end{mainclassification}

\begin{mainremarkenumerate}%
\label{mainrm:classification}
\item
The three vanishing conditions that appear for~$k > 1$ imply one another from left to right, so that they yield~$4$ as opposed to~$8$ cases.
\item
The classification of harmonic weak Maa\ss{} forms, i.e.\@ the case~$d = 0$, encompasses only~$9$ cases. Each of them generalizes in a suitable way, but there is an additional Case~IIId if~$k > 1$, which occurs only in positive depth~$d$.
\end{mainremarkenumerate}

\begin{mainexample}%
\label{mainex:classification}
In our classification, the previously mentioned form~$-\frac{1}{6} \log(y^{12} |\eta(\tau)|^{48})$ falls into Case~Ia. Bringmann--Kudla highlighted~$s$-derivatives of Eisenstein series as a further class of interesting functions that are not covered by their classification. These are polyharmonic Maa\ss{} forms of type~Id if~$k < 0$, of type~Ia if~$k = 0$, of type~IIIb if~$k = 2$, and of type~IIIa if~$k > 2$. The case~$k = 1$ gives rise to incoherent Eisenstein series that fall under type~IIa. See Section \ref{sec:modular_realization} for details.
\end{mainexample}

Elementary Lie algebra considerations that serve well in the harmonic setup cease to work in ours. 
In particular, the method of classification employed by Bringmann--Kudla relies heavily on the harmonicity condition~$\Delta_k\, f=0$, which implies that $K$-types of the Harish-Chandra module associated with~$f$ occur with multiplicity at most~$1$. The transition between $K$-types can therefore be adequately described by the vanishing or non-vanishing of scalars. This no longer holds true for general polyharmonic Maa\ss{} forms, which is the major obstacle to the classification that we achieved. It turns out that the theory of quiver representations beautifully enables us to circumvent this obstacle. In Proposition~\ref{P:Equivalence} and Theorem~\ref{T:Equivalence}, we provide equivalences between categories of specific Harish-Chandra modules and quiver representations following~\cite{Gelfand,mp1,mp,CB,Nodal,Gnedin}. This naturally leads us to representations of the Gelfand quiver, which are well-known to be intricate. A key step in our classification is Theorem~\ref{thm:corrspondence_polyharmonic_maass_to_harish_chandra}, in which we show that quiver representations that arise from polyharmonic Maa\ss{} forms are cyclic and indecomposable. There are only few cyclic representations of the Gelfand quiver, which we give in Theorem~\ref{T:GelfandQuiverCyclicReps} and which mirror the Cases~Ia--d and~IIIa--d in our classification. Cases~IIa and~IIb arise from the two-cyclic quiver as discussed in Remark~\ref{rm:CyclicQuiverCyclicReps}.

\subsection*{The explicit realization}

Our second main result is the explicit realization of all cases of the classification. Classification~\ref{mainthm:classification} constrains the possible Harish-Chandra modules associated with polyharmonic weak Maa\ss{} forms, however a priori it is not clear whether each case appears.
\begin{maintheorem}%
\label{mainthm:realization}
For any~$k \in \ZZ$ and any case of Classification~\ref{mainthm:classification} associated with this~$k$, there is a (vector-valued) polyharmonic weak Maa\ss\ form of weight~$k$ that realizes this case.
\end{maintheorem}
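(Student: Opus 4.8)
The plan is to construct, for every weight $k \in \ZZ$ and every case of Classification~\ref{mainthm:classification} admissible for that~$k$, an explicit (vector-valued) polyharmonic weak Maa\ss{} form realizing it, and to certify the realization by computing the test functions $\rmL_k \Delta_k^d f$, $\rmR_k^{1-k}\Delta_k^d f$, $\rmL^k \Delta_k^{d-1} f$, $\rmL_k^k \Delta_k^d f$ that appear in the statement. By Theorem~\ref{thm:corrspondence_polyharmonic_maass_to_harish_chandra} any polyharmonic weak Maa\ss{} form automatically generates a cyclic, indecomposable representation of the two-cyclic or the Gelfand quiver, so realizing a case reduces to producing a form whose vanishing/non-vanishing pattern of these four functions matches the prescribed one; no independent check of cyclicity or indecomposability is required, and the list of target patterns is exactly the list of ten cyclic indecomposable representations recorded in Theorem~\ref{T:GelfandQuiverCyclicReps} and Remark~\ref{rm:CyclicQuiverCyclicReps}. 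The natural source of such forms is the theory of vector-valued spectral families from the body of the paper: a holomorphic family $\tau \mapsto F(\tau,s)$ of $\Delta_k$-eigenforms with eigenvalue $\lambda(s)$ has Taylor coefficients at a point $s_0$ with $\lambda(s_0)=0$ that are polyharmonic of depth at most the Taylor order, and the exact depth together with the fine structure is governed by the leading behaviour of $F$, $\rmL_k F$ and $\rmR_k F$ near~$s_0$. Taking $F$ to be the real-analytic Eisenstein series of weight~$k$, scalar or vector-valued for the Weil representation of a small even lattice, and expanding at $s_0 = k/2$ and $s_0 = 1-k/2$ already yields a first batch of cases; indeed the Eisenstein $s$-derivatives and the Kronecker-limit form $-\tfrac16\log(y^{12}|\eta(\tau)|^{48})$ of Example~\ref{mainex:classification} account for Cases~Ia, Id, IIa, IIIa, IIIb.

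To obtain the remaining cases without constructing a separate seed for each pair $(k,\text{case})$, I would invoke the weight-shift theory for Taylor coefficients of spectral families: the raising and lowering operators $\rmR_k$, $\rmL_k$, together with multiplication by a fixed holomorphic (vector-valued) modular form and the $\xi$-type conjugations, act on spectral families and hence on all their Taylor coefficients, and the theory records precisely how the vanishing pattern of the four test functions transforms under each such move, including across the boundaries between the regimes $k<1$, $k=1$, $k>1$. This turns the realization into a finite bookkeeping problem: start from the Eisenstein realizations above, chase them through weight shifts, and read off which case each lands in for every~$k$ and every depth $d$ — depths $d \ge 1$ being accessible because the $d$-th Taylor coefficient at a simple zero of $\lambda$ is generically polyharmonic of exact depth~$d$.

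The concrete verification is carried out on explicit Fourier--Taylor expansions: for a candidate~$f$ one writes down $\Delta_k^{d} f$ and $\Delta_k^{d-1} f$ and applies $\rmL_k$, $\rmL_k^k$, $\rmR_k^{1-k}$, $\rmL^k$ term by term, checking the prescribed vanishing and non-vanishing. The comprehensive computer implementation mentioned in the abstract performs this both symbolically, as rational-function arithmetic in the spectral parameter, and numerically as a cross-check, and it also produces the explicit examples referenced in Section~\ref{sec:modular_realization}.

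The main obstacle is the new Case~IIId, which occurs only for $k>1$ in exact depth $d \ge 1$ and therefore cannot be produced at depth~$0$: one must exhibit a genuinely positive-depth object --- naturally a higher Taylor coefficient of a weight-$k$ Eisenstein spectral family, or an iterated Eisenstein-type integral of the kind underlying modular graph functions --- whose top harmonic layer is of the most degenerate type, so that $\rmL_k \Delta_k^d f = 0$, while the layer below it is not, so that $\rmL^k \Delta_k^{d-1} f \neq 0$. The delicate point throughout is the \emph{non-vanishing} assertions: vanishing of a test function follows from modularity and the eigenvalue equation, whereas non-vanishing forces one to isolate a specific Fourier coefficient or constant term of the spectral family and prove it is nonzero, which is exactly where the weight-shift formulas and the explicit Eisenstein computations are indispensable. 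The degenerate weights $k \in \{0,1,2\}$, where $\lambda$ acquires a double zero or some raising/lowering operator drops rank, require the same care and are, I expect, where genuinely vector-valued rather than scalar seeds become necessary.
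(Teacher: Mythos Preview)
Your overall strategy---build polyharmonic forms from Taylor coefficients of spectral families, then move them around in weight using Theorem~\ref{thm:spectral_families_altering_weight}---is indeed the paper's strategy. But the proposal has a concrete error and omits the specific constructions that make the argument go through.

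First, you have the defining condition of Case~IIId backwards. You write that one needs $\rmL_k \Delta_k^d f = 0$ together with $\rmL^k \Delta_k^{d-1} f \ne 0$; but by Section~\ref{ssec:classification_BK_labels} that pair of conditions is exactly Case~IIIa. Case~IIId is characterized by $\rmL_k^k \Delta_k^{d-1} f = 0$, i.e.\ the \emph{lower} layer is already degenerate. Your suggested candidate (a higher Taylor coefficient of the weight-$k$ Eisenstein family) will therefore land in~IIIa, not~IIId.

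Second, Eisenstein $s$-derivatives together with weight shifts do not reach Cases~Ib, Ic, IIb, IIIc, IIId. The paper needs genuinely different seeds and transfers:
\begin{itemize}
\item For Ib one must use weakly holomorphic forms with nonzero principal part, embedded in the Poincar\'e spectral families of Proposition~\ref{prop:poincarespectral}; Eisenstein series never satisfy $\rmR_k^{1-k}\Delta_k^d f \ne 0$ with $\rmL_k\Delta_k^d f = 0$ in the required way.
\item Case~Ic is obtained from~Ib by the flipping operator~$\rmF_k$ of~\eqref{eq:def:flipping_operator}, using~\eqref{eq:flipping_maass_low} and~\eqref{eq:flipping_maass_bol}. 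Your phrase ``$\xi$-type conjugations'' may gesture at this, but the specific involution swapping the two test conditions is what makes the argument work.
\item Case~IIb requires the incoherent Eisenstein series~$E_D^-(\tau,s)$; ordinary weight-$1$ Eisenstein series do not produce a form with $\rmL_1\Delta_1^d f \ne 0$.
\item The paper's mechanism for Cases~IIIa, IIIc, IIId is not a weight shift in the sense of Theorem~\ref{thm:spectral_families_altering_weight} but the transfer $f \mapsto \rmR_{2-k}^{k-1} f$ from weight~$2-k$ to weight~$k$, which via~\eqref{eq:laplace_commutators} and~\eqref{eq:maass_iterated_commutators} converts a Case~Id (resp.~Ic, Ib) form of depth~$d$ (resp.~$d+1$, $d+1$) into a Case~IIIa (resp.~IIIc, IIId) form of depth~$d$. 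In particular, IIId arises from Ib in depth~$d+1$, not from an Eisenstein family in weight~$k$.
\end{itemize}

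Without these ingredients the bookkeeping you describe cannot close, and the non-vanishing checks you flag as ``delicate'' are in fact handled in the paper by choosing the seeds so that the relevant condition is already known (e.g.\ nonzero principal part for~Ib, $\rmL E_2 = \tfrac{3}{\pi}$ for~IIIb), rather than by Fourier-coefficient inspection after the fact.
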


\begin{mainremark}
In Theorem~\ref{thm:spectral_families_altering_weight} we provide a comprehensive theory that allows us to alter the weight of a modular realization of Cases~Ia--d or IIIa--d if it is obtained from a spectral family. This theory involves a detailed analysis of the action of several differential operators on spectral families and is interesting in its own right.
\end{mainremark}

To prove Theorem~\ref{mainthm:realization} we construct preimages under $\Delta_k^d$ of the realizations provided by Bringmann and Kudla. For scalar-valued realizations we can employ a classical approach and extract polyharmonic Maa\ss{} forms from the Taylor coefficients of a spectral family. This idea was already used by  Lagarias and Rhoades~\cite{lagariasrhoades} for Eisenstein series and by Duke--\.Imamo\u{g}lu--T\'oth~\cite{dit2013} for Poincar\'e series. For vector-valued forms we vastly generalize this method. We show that the weight of a modular realization can be adjusted when taking products with specific vector-valued Maa\ss{} forms. In special cases this was also used by Bringmann and Kudla.  Our Theorem~\ref{thm:spectral_families_altering_weight} extends this approach to the broadest context compatible with our classification.

The proof of Theorem~\ref{thm:spectral_families_altering_weight} depends on a delicate analysis of the action of the Laplace operator on specific products of Maa\ss{} forms. It can be viewed as an explicit instance of a theory by Bernstein--Gelfand~\cite{bernstein-gelfand-1980}, who studied tensor products of Harish-Chandra modules with finite dimensional representation. Our treatment is sufficiently explicit to accommodate a computer implementation, which is available on the third author's homepage and allows for the precise calculation of coefficients in every single case. 

\subsection*{Structure of the paper}
The paper is structured as follows. In Section~\ref{sec:harish_chandra_blocks} we provide preliminaries on Harish-Chandra modules and investigate the structure of the relevant path algebra. In Section~\ref{sec:cyclic_modules} we classify the corresponding cyclic modules. The bridge between these results and the classification of the Harish-Chandra modules arising from polyharmonic weak Maa\ss{} forms is provided in Section~\ref{sec:polyharmonic_to_quiver}. In Section~\ref{sec:spectral_taylor_coefficients} we develop the results that enable us to alter the weight of spectral families. We use these results to show that there exist polyharmonic weak Maa\ss{} forms for each of the cases of the classification in Section~\ref{sec:modular_realization}.




\section{Blocks of the category of Harish-Chandra--modules for \texpdf{$\SL{2}(\RR)$}{SL(2,R)}}%
\label{sec:harish_chandra_blocks}

\subsection{Preliminaries on Harish-Chandra--modules}

Let $G = \SL{2}(\RR)$ and $K = \SO{2}(\RR)$ be a maximal compact subgroup. Let
\begin{gather*}
  \lieg
:=
  \Lie(G) \otimes_{\RR} \CC
\cong
  \mathfrak{sl}_2(\CC)
=
  \left\langle
  H = 
  \begin{psmatrix}
  0 & -i \\
  i & 0
  \end{psmatrix}
\tx{,}\,
  X = \mfrac{1}{2}
  \begin{psmatrix}
  1 & i \\
  i & -1
  \end{psmatrix}
\tx{,}\,
  Y = 
  \mfrac{1}{2}\begin{psmatrix}
  1 & -i \\
  -i & -1
  \end{psmatrix}
  \right\rangle
\end{gather*}
be the complexified Lie algebra of~$G$ and~$U(\lieg)$ be the universal enveloping algebra of~$\lieg$. The following relations in $\lieg$ are satisfied:
\begin{gather}\label{E:DefiningIdentities}
  [H, X] = 2 X
\tx{,\ }
  [H, Y] = - 2Y
\tx{\ and\ }
  [X, Y] = H
\tx{.}
\end{gather}
Let 
\begin{gather}
\label{eq:def:casimir_element}
  C = H^2-2H + 4 XY = H^2 + 2H + 4 YX \in U(\lieg) 
\end{gather}
be a Casimir element. It is well-known that the center~$\frakz$ of~$U(\lieg)$ is~$\CC[C]$ (the algebra of polynomials in $C$). For any~$\theta \in \RR$, we set
\begin{gather}
\label{E:Rotation}
  k_\theta = \exp(i \theta H)
= 
  \begin{psmatrix}
  \cos(\theta) & \sin(\theta) \\
  -\sin(\theta) & \cos(\theta)
  \end{psmatrix}
\in
  K = \SO{2}(\RR)
\tx{.}
\end{gather}

\begin{definition}
A complex vector space~$M$ is a~$(\lieg, K)$--module if it has 
a structure $(M, \circ)$ of a representation of~$\lieg$ as well as a structure~$(M, \cdot)$ of a locally smooth representation of~$K$ such that, first,
\begin{gather*}
  \bigl(\Ad_k(Z)\bigr) \circ v
=
  k \cdot \bigl(Z \circ (k^{-1} \cdot v)\bigr)
\end{gather*}
for any~$v \in M$, $k \in K$, and~$Z \in \lieg$, where~$\Ad$ is the adjoint action of~$G$ on~$\lieg$, and second,
\begin{gather*}
  \mfrac{\rmd}{\rmd t}\, \big( \exp(tZ) \cdot v \big) \big|_{t = 0}
=
  Z \circ v
\end{gather*}
for any~$v \in M$ and~$Z \in \frakk \subset \lieg$, the Lie complexified Lie algebra of~$K$. In what follows, we shall omit~$\circ$ and~$\cdot$ from the notation for the action on~$M$. For~$n \in \ZZ$ we put:
\begin{gather*}
  M_n = \big\{v \in M \,:\, H \circ v = n v \big\}
\tx{.}
\end{gather*}
A $(\lieg, K)$--module $M$ is a \emph{Harish-Chandra} module if it is finitely generated over~$U(\lieg)$ and 
\begin{gather*}
  M
\cong
  \bigoplus_{n \in \ZZ} M_n
\tx{\ with\ }
  \dim_{\CC}(M_n) < \infty
  \tx{\ for all\ } n \in \ZZ
\tx{.}
\end{gather*}
\end{definition}

In what follows, $\HC(\lieg, K)$ denotes the category of Harish-Chandra~$(\lieg, K)$--modules. 

\begin{definition}
Let~$l \in \ZZ_{\ge 0}$ and~$\gamma = l^2-1$. Let~$\HC_l(\lieg, K)$ be the full subcategory of $\HC(\lieg, K)$ consisting of such Harish-Chandra modules~$M$ for which there exists~$m \in \ZZ_{> 0}$ (depending on $M$) such that~$(C- \gamma I)^m \cdot M = 0$.
\end{definition}

We have the following standard result.
\begin{lemma}
\label{la:hc_quiver_category}
The category\/~$\HC_l(\lieg, K)$ is a full subcategory of\/~$\HC(\lieg, K)$. For any object~$M$ of\/~$\HC_l(\lieg, K)$ we have a direct sum decomposition 
\begin{gather*}
  M = \bigoplus\limits_{i \in \ZZ} M_{-l-1 + 2i}
\end{gather*}
and its $\lieg$--module structure can be visualized by the following (infinite) diagram of finite dimensional vector spaces and linear maps:
\begin{gather}
\label{E:DiagramHCmodule}
  \begin{tikzpicture}[baseline]
  \matrix(m)[matrix of math nodes,
  column sep = 2.5em,
  ampersand replacement=\&]
  {  \cdots \& M_{-l-1} \& M_{-l+1}
  \& \cdots \& M_{l-1}  \& M_{l+1}
  \& \cdots\tx{.} \\};
  \path[-stealth,bend left=15]
  (m-1-1.east |- m-1-2.north) edge node[above] {$X$} (m-1-2.west |- m-1-2.north)
  (m-1-2.east |- m-1-2.north) edge node[above] {$X$} (m-1-3.west |- m-1-2.north)
  (m-1-3.east |- m-1-2.north) edge node[above] {$X$} (m-1-4.west |- m-1-2.north)
  (m-1-4.east |- m-1-2.north) edge node[above] {$X$} (m-1-5.west |- m-1-2.north)
  (m-1-5.east |- m-1-2.north) edge node[above] {$X$} (m-1-6.west |- m-1-2.north)
  (m-1-6.east |- m-1-2.north) edge node[above] {$X$} (m-1-7.west |- m-1-2.north)
  (m-1-7.west |- m-1-2.south) edge node[below] {$Y$} (m-1-6.east |- m-1-2.south)
  (m-1-6.west |- m-1-2.south) edge node[below] {$Y$} (m-1-5.east |- m-1-2.south)
  (m-1-5.west |- m-1-2.south) edge node[below] {$Y$} (m-1-4.east |- m-1-2.south)
  (m-1-4.west |- m-1-2.south) edge node[below] {$Y$} (m-1-3.east |- m-1-2.south)
  (m-1-3.west |- m-1-2.south) edge node[below] {$Y$} (m-1-2.east |- m-1-2.south)
  (m-1-2.west |- m-1-2.south) edge node[below] {$Y$} (m-1-1.east |- m-1-2.south);
  \end{tikzpicture}
\end{gather}
\end{lemma}

We call~\eqref{E:DiagramHCmodule} a \emph{diagram description} of a Harish-Chandra module~$M \in \HC_l(\lieg, K)$ (see also~\cite{Mazorchuk2010} for 
a detailed treatment of various classes of representations of $\lieg$).

For any~$p \in \ZZ$, consider the following fragment of~\eqref{E:DiagramHCmodule}:
\begin{gather*}
  \begin{tikzpicture}[baseline]
  \matrix(m)[matrix of math nodes,
  column sep = 4em,
  ampersand replacement=\&]
  { M_{p-1} \& M_{p+1}. \\};
  \path[-stealth,bend left=15]
  (m-1-1.north east) edge node[above] {$X$} (m-1-2.north west)
  (m-1-2.south west) edge node[below] {$Y$} (m-1-1.south east);
  \end{tikzpicture}
\end{gather*}
A calculation shows that
\begin{gather}
\label{E:KeyIdentities}
\begin{aligned}
  & XY = \mfrac{1}{4}\bigl(C - H^2 + 2H\bigr) = \mfrac{1}{4}\bigl(C -(p^2-1)\bigr)
\tx{,}\\ 
  & YX = \mfrac{1}{4}\bigl(C - H^2 - 2H\bigr) = \mfrac{1}{4}\bigl(C -(p^2-1)\bigr) 
\tx{.}
\end{aligned}
\end{gather}
As a consequence of~\eqref{E:KeyIdentities}, we obtain the following statements about the diagram description~\eqref{E:DiagramHCmodule} of~$M \in \HC_l(\lieg, K)$:
\begin{itemize}
\setlength\itemsep{0pt}
\item $X|_{M_{p-1}}$ is an isomorphism for $p \ne \pm l$. 
\item $Y|_{M_{p+1}}$ is an isomorphism for $p \ne \pm l$.
\item For $p = \pm l$, the corresponding endomorphisms $XY$ and $YX$ are nilpotent.
\end{itemize}
Let $M \xlongrightarrow{\psi} N$ be a morphism in the category $\HC_l(\lieg, K)$. Then for any $i \in \ZZ$ we have: $\psi(M_i) \subseteq N_i$ and for any $p \in \ZZ$ the following diagram is commutative:
\begin{gather}
\label{E:MorphismOfReprs}
  \begin{tikzpicture}[baseline]
  \matrix(m)[matrix of math nodes,
  column sep = 4em, row sep = 4em,
  ampersand replacement=\&]
  {  M_{p-1} \& M_{p+1} 
  \\ N_{p-1} \& N_{p+1} 
  \\};
  \path[-stealth]
  (m-1-1) edge node[left]  {\small $\psi_{p-1}$} (m-2-1)
  (m-1-2) edge node[right] {\small $\psi_{p+1}$} (m-2-2);
  \path[-stealth,bend left=10]
  (m-1-1.east |- m-1-1.north) edge node[above] {\small $X_M$} (m-1-2.west |- m-1-1.north)
  (m-1-2.west |- m-1-1.south) edge node[below] {\small $Y_M$} (m-1-1.east |- m-1-1.south);
  \path[-stealth,bend left=10]
  (m-2-1.east |- m-2-1.north) edge node[above] {\small $X_N$} (m-2-2.west |- m-2-1.north)
  (m-2-2.west |- m-2-1.south) edge node[below] {\small $Y_N$} (m-2-1.east |- m-2-1.south);
  \end{tikzpicture}
\end{gather}
where $\psi_{p\pm 1} := \psi\big|_{M_{p \pm 1}}$. Conversely, let $M, N \in \HC_l(\lieg, K)$ and $\bigl(M_i \xlongrightarrow{\psi_i} N_i\bigr)_{i \in \ZZ}$ be any family of linear maps such that the diagram \eqref{E:MorphismOfReprs} is commutative for any $p \in \ZZ$.  We put $\psi = \oplus_{i \in \ZZ} \psi_i$. Then $M \xlongrightarrow{\psi} N$ is a morphism in the category $\HC_l(\lieg, K)$.

\subsection{Path algebras}

Let $\dD = \CC\llbracket t\rrbracket$, $\idm = (t)$ and
\begin{gather}
\label{E:GelfandOrder}
  \dA = 
  \begin{psmatrix}
  \dD & \idm & \idm\\
  \dD & \dD & \idm \\
  \dD & \idm & \dD
  \end{psmatrix}
\subset
  \Mat{3\times 3}(\dD)
\quad\tx{and}\quad 
  \dB =
  \begin{psmatrix}
  \dD & \idm \\
  \dD & \dD 
  \end{psmatrix}
\subset
  \Mat{2\times 2}(\dD)
\tx{.}
\end{gather}
Note that the algebra~$\dB$ is isomorphic to the arrow ideal completion of the path algebra of the cyclic quiver
\begin{gather}
\label{E:CyclicQuiver}
  \begin{tikzpicture}[baseline]
  \matrix(m)[matrix of math nodes,
  column sep = 4em,
  ampersand replacement=\&]
  { - \& + \\};
  \path[-stealth,bend left=15]
  (m-1-1.east |- m-1-1.north) edge node[above] {$a$} (m-1-2.west |- m-1-1.north)
  (m-1-2.west |- m-1-1.south) edge node[below] {$b$} (m-1-1.east |- m-1-1.south);
  \end{tikzpicture}
\tx{,}
\end{gather}
whereas~$\dA$ is isomorphic to the arrow ideal completion of the path algebra of the so-called Gelfand quiver
\begin{gather}
\label{E:GelfandQuiver}
  \begin{tikzpicture}[baseline]
  \matrix(m)[matrix of math nodes,
  column sep = 4em,
  ampersand replacement=\&]
  { - \& \ast \& + \\};
  \path[-stealth,bend left=15]
  (m-1-1.east |- m-1-2.north) edge node[above] {$a_-$} (m-1-2.west |- m-1-2.north)
  (m-1-2.west |- m-1-2.south) edge node[below] {$b_-$} (m-1-1.east |- m-1-2.south);
  \path[-stealth,bend right=15]
  (m-1-3.west |- m-1-2.north) edge node[above] {$a_+$} (m-1-2.east |- m-1-2.north)
  (m-1-2.east |- m-1-2.south) edge node[below] {$b_+$} (m-1-3.west |- m-1-2.south);
  \end{tikzpicture}
\tx{,}
\qquad
  a_{-} b_{-} = a_{+} b_{+}
\tx{.}
\end{gather}

\begin{remark}
The visualization of the Gelfand quiver in~\eqref{E:GelfandQuiver} is the usual one, but in this work another one naturally emerges. We will apply the following identification without further mentioning:
\begin{gather*}
  \left[
  \begin{tikzpicture}[baseline]
  \matrix(m)[matrix of math nodes,
  column sep = 4em,
  ampersand replacement=\&]
  { - \& \ast \& + \\};
  \path[-stealth,bend left=15]
  (m-1-1.east |- m-1-2.north) edge node[above] {$a_-$} (m-1-2.west |- m-1-2.north)
  (m-1-2.west |- m-1-2.south) edge node[below] {$b_-$} (m-1-1.east |- m-1-2.south);
  \path[-stealth,bend right=15]
  (m-1-3.west |- m-1-2.north) edge node[above] {$a_+$} (m-1-2.east |- m-1-2.north)
  (m-1-2.east |- m-1-2.south) edge node[below] {$b_+$} (m-1-3.west |- m-1-2.south);
  \end{tikzpicture}
  \right]
=
  \left[
  \begin{tikzpicture}[baseline]
  \matrix(m)[matrix of math nodes,
  column sep = 4em,
  ampersand replacement=\&]
  { - \& \ast \& + \\};
  \path[-stealth,bend left=15]
  (m-1-1.east |- m-1-2.north) edge node[above] {$a_-$} (m-1-2.west |- m-1-2.north)
  (m-1-2.west |- m-1-2.south) edge node[below] {$b_-$} (m-1-1.east |- m-1-2.south);
  \path[-stealth,bend left=15]
  (m-1-3.west |- m-1-2.south) edge node[below] {$a_+$} (m-1-2.east |- m-1-2.south)
  (m-1-2.east |- m-1-2.north) edge node[above] {$b_+$} (m-1-3.west |- m-1-2.north);
  \end{tikzpicture}
  \right]
\tx{.}
\end{gather*}
\end{remark}

\begin{example}
The isomorphisms between~$\frakB$ and~$\frakA$ and the completed path algebras of~\eqref{E:CyclicQuiver} and~\eqref{E:GelfandQuiver} can be implemented as follows:
\begin{gather*}
  a \mto \begin{psmatrix} 0 & 0 \\ 1 & 0 \end{psmatrix}
\tx{,}\ 
  b \mto \begin{psmatrix} 0 & t \\ 0 & 0 \end{psmatrix}
\tx{;}\quad
  a_- \mto \begin{psmatrix} 0 & t & 0 \\ 0 & 0 & 0 \\ 0 & 0 & 0 \end{psmatrix}
\tx{,}\ 
  a_+ \mto \begin{psmatrix} 0 & 0 & t \\ 0 & 0 & 0 \\ 0 & 0 & 0 \end{psmatrix}
\tx{,}\ 
  b_- \mto \begin{psmatrix} 0 & 0 & 0 \\ 1 & 0 & 0 \\ 0 & 0 & 0 \end{psmatrix}
\tx{,}\ 
  b_+ \mto \begin{psmatrix} 0 & 0 & 0 \\ 0 & 0 & 0 \\ 1 & 0 & 0 \end{psmatrix}
\tx{.}
\end{gather*}
More graphically, a matrix with a single nonzero, monomial entry corresponds to a path, whose source and target are recorded by its position. The column position records the source and the row position records the target of a path. In the case of the cyclic quiver we assign the labels~``$-$'' and~``$+$'' to the first and second position, respectively. The exponent of~$t$ in a monomial records the number of times a path arrives at~``$-$''.
For instance, we next depict the path starting at~``$+$`` and ending at~``$-$`` with~$2$ loops and the reverse path together with the corresponding elements of~$\frakB$:
\begin{gather*}
  \begin{tikzpicture}[baseline]
  \matrix(m)[matrix of math nodes,
  column sep = 4em,
  ampersand replacement=\&]
  { - \& + \\};
  \path[-,bend left=15]
  (m-1-1.east |- m-1-1) edge (m-1-2.west |- m-1-1)
  (m-1-2.west |- m-1-1) edge (m-1-1.east |- m-1-1);
  \path[-,bend left=40]
  (m-1-1.east |- m-1-1) edge (m-1-2.west |- m-1-1)
  (m-1-2.west |- m-1-1) edge (m-1-1.east |- m-1-1);
  \path[-stealth,bend left=80]
  (m-1-2.west |- m-1-1) edge (m-1-1.east |- m-1-1);
  \end{tikzpicture}
\quad
  \begin{psmatrix} 0 & t^3 \\ 0 & 0 \end{psmatrix}
\tx{,}
\qquad
  \begin{tikzpicture}[baseline]
  \matrix(m)[matrix of math nodes,
  column sep = 4em,
  ampersand replacement=\&]
  { - \& + \\};
  \path[-,bend left=15]
  (m-1-1.east |- m-1-1) edge (m-1-2.west |- m-1-1)
  (m-1-2.west |- m-1-1) edge (m-1-1.east |- m-1-1);
  \path[-,bend left=40]
  (m-1-1.east |- m-1-1) edge (m-1-2.west |- m-1-1)
  (m-1-2.west |- m-1-1) edge (m-1-1.east |- m-1-1);
  \path[-stealth,bend left=80]
  (m-1-1.east |- m-1-1) edge (m-1-2.west |- m-1-1);
  \end{tikzpicture}
\quad
  \begin{psmatrix} 0 & 0 \\ t^2 & 0 \end{psmatrix}
\tx{.}
\end{gather*}
The composition of the left with the right path yields five loops starting at~``$+$'', corresponding to~$\begin{psmatrix} 0 & t^3 \\ 0 & 0 \end{psmatrix} \begin{psmatrix} 0 & 0 \\ t^2 & 0 \end{psmatrix} = \begin{psmatrix} 0 & 0 \\ 0 & t^5  \end{psmatrix} \in \frakB$.

In the case of the Gelfand quiver, the source and target~``$\ast$'', ``$-$'', and~``$+$'' in that order correspond to the positions in the matrix. The exponent of~$t$ records the number of times a path arrives at~``$\ast$''. It is crucial for this correspondence that the two loops starting at~``$\ast$'' are considered equivalent by the relation~$a_{-} b_{-} = a_{+} b_{+}$ in~\eqref{E:GelfandQuiver}.
\end{example}

In what follows, $\Rep(\dA)$ and~$\Rep(\dB)$ denote the categories of finite dimensional left $\dA$--modules and $\dB$--modules. They are equivalent to the categories of finite dimensional \emph{nilpotent} representations of~\eqref{E:GelfandQuiver} and~\eqref{E:CyclicQuiver}.

\paragraph{The cyclic quiver}

Let~$M \in \HC_0(\lieg, K)$, whose diagram description is~\eqref{E:DiagramHCmodule}. We use the following notation:
\begin{gather}
\label{eq:def:XYrestrictions_cyclic}
  Z_- = X \big|_{M_{-1}}
\quad\tx{and}\quad
  Z_+ = Y \big|_{M_{+1}}
\tx{.}
\end{gather}

\begin{proposition}
\label{P:Equivalence}
There is an equivalence of categories
\begin{gather}
  \HC_0(\lieg, K) \xlongrightarrow{\EE} \Rep(\dB)
\end{gather}
given on the level of objects by the assignment
\begin{gather}\label{E:FunctorTwoCycles}
  M \xlongmapsto{\EE}
  \left[
  \begin{tikzpicture}[baseline=-0.1\baselineskip]
  \matrix(m)[matrix of math nodes,
  column sep = 4em,
  ampersand replacement=\&]
  { V_- \& V_+ \\};
  \path[-stealth,bend left=15]
  (m-1-1.east |- m-1-1.north) edge node[above] {$A$} (m-1-2.west |- m-1-1.north)
  (m-1-2.west |- m-1-1.south) edge node[below] {$B$} (m-1-1.east |- m-1-1.south);
  \end{tikzpicture}
  \right]
=
  \left[
  \begin{tikzpicture}[baseline=-0.1\baselineskip]
  \matrix(m)[matrix of math nodes,
  column sep = 4em,
  ampersand replacement=\&]
  { M_{-1} \& M_{+1} \\};
  \path[-stealth,bend left=15]
  (m-1-1.east |- m-1-1.north) edge node[above] {$Z_-$} (m-1-2.west |- m-1-1.north)
  (m-1-2.west |- m-1-1.south) edge node[below] {$Z_+$} (m-1-1.east |- m-1-1.south);
  \end{tikzpicture}
  \right]
\tx{.}
\end{gather}
\end{proposition}

\begin{proof}
Let~$M$ and~$N$ be a pair of objects of~$\HC_0(\lieg, K)$ and~$M \xlongrightarrow{\psi} N$ be any morphism. Commutativity of the diagram \eqref{E:MorphismOfReprs} for $p = 0$ implies that the assignment \eqref{E:FunctorTwoCycles} is functorial. Moreover, for any $n \in \ZZ_{> 0}$ we have:
\begin{gather}\label{E:MapsComponents}
\psi_{2n+1} = \bigl(X_N\bigr)^n \psi_1  \bigl(X_M\bigr)^{-n} \quad \tx{and} \quad \psi_{-2n-1} = \bigl(X_N\bigr)^{-n} \psi_{-1} \bigl(X_M\bigr)^{n}.
\end{gather}
Hence, $\psi_1 = 0 = \psi_{-1}$ implies that $\psi = 0$. As a consequence, the functor $\EE$ is faithful. 

Now we prove that $\EE$ is full. Let $\phi_{\pm 1}: M_{\pm 1} \rightarrow N_{\pm 1}$ be linear maps such that 
the diagram
\begin{gather}\label{E:FragmentMorphism}
  \begin{tikzpicture}[baseline]
  \matrix(m)[matrix of math nodes,
  column sep = 4em, row sep = 4em,
  ampersand replacement=\&]
  {  M_{-1} \& M_{+1} 
  \\ N_{-1} \& N_{+1} 
  \\};
  \path[-stealth]
  (m-1-1) edge node[left]  {\small $\phi_{-1}$} (m-2-1)
  (m-1-2) edge node[right] {\small $\phi_{+1}$} (m-2-2);
  \path[-stealth,bend left=10]
  (m-1-1.east |- m-1-1.north) edge node[above] {\small $X_M$} (m-1-2.west |- m-1-1.north)
  (m-1-2.west |- m-1-1.south) edge node[below] {\small $Y_M$} (m-1-1.east |- m-1-1.south);
  \path[-stealth,bend left=10]
  (m-2-1.east |- m-2-1.north) edge node[above] {\small $X_N$} (m-2-2.west |- m-2-1.north)
  (m-2-2.west |- m-2-1.south) edge node[below] {\small $Y_N$} (m-2-1.east |- m-2-1.south);
  \end{tikzpicture}
\end{gather}
is commutative. For any $n \in \ZZ_{>0}$ consider the linear maps $M_{\pm 2n \pm 1} \xlongrightarrow{\phi_{\pm 2n \pm 1}} N_{\pm 2n \pm 1}$ given by the formulae 
$\phi_{2n+1} = \bigl(X_N\bigr)^n \phi_1  \bigl(X_M\bigr)^{-n}$ and $\phi_{-2n-1} = \bigl(X_N\bigr)^{-n} \phi_{-1} \bigl(X_M\bigr)^{n}$. It follows by construction that $\phi_{2n+1} X_M = X_N \phi_{2n-1}$ for all $n \in \ZZ$. 
It can be be checked  that $\phi_{2n-1} Y_M = Y_N \phi_{2n+1}$ for all $n \in \ZZ$, too. We put: 
$\phi := \oplus_{n \in \ZZ} \phi_{2n+1}$. Then $M \xlongrightarrow{\phi} N$ is a morphism in $\HC_0(\lieg, K)$. Hence,  the functor $\EE$ is full, as asserted. 

It remains to be proven that $\EE$ is essentially surjective. Take an arbitrary nilpotent representation 
\begin{gather*}
W = \left[
  \begin{tikzpicture}[baseline=-0.1\baselineskip]
  \matrix(m)[matrix of math nodes,
  column sep = 4em,
  ampersand replacement=\&]
  { V_- \& V_+ \\};
  \path[-stealth,bend left=15]
  (m-1-1.east |- m-1-1.north) edge node[above] {$A$} (m-1-2.west |- m-1-1.north)
  (m-1-2.west |- m-1-1.south) edge node[below] {$B$} (m-1-1.east |- m-1-1.south);
  \end{tikzpicture}
  \right]
\end{gather*}
of the cyclic quiver \eqref{E:CyclicQuiver}. For any $n \in \ZZ_{> 0}$ we put: $M_{2n+1} = V_+$ and 
$M_{-2n-1} = V_-$. Let $M = \oplus_{n \in \ZZ} M_{2n+1}$. We define $X, Y, H \in \End_{\CC}(M)$ using the following rules: 
\begin{itemize}
\item $H\big|_{M_{2n+1}} := (2n+1) \mathsf{Id}$, $X(M_{2n-1}) \subseteq M_{2n+1}$ and $Y(M_{2n+1}) \subseteq M_{2n-1}$ for all $n \in \ZZ$.
\item $X\big|_{M_{-1}} := A$ and $Y\big|_{M_{1}} := B$.
\item $X\big|_{M_{2n-1}} := \mathsf{Id}$ for all $n \ne 0$. 
\end{itemize}
A description of  $Y$ requires more efforts. We first introduce  operators $C_\pm \in \End_{\CC}(V_\pm)$ by the formulae 
\begin{equation}\label{E:Casimirs}
AB = \frac{1}{4}\bigl(C_+ +I\bigr) \quad \tx{and} \quad BA = \frac{1}{4}\bigl(C_- +I\bigr).
\end{equation}
Then for any $n \in \ZZ_{>0}$ we define  the corresponding linear map $$M_{p+1} = V_+ \xlongrightarrow{Y} V_+ = M_{p-1}$$ by the rule: 
$Y = \frac{1}{4}\left(C_+ - (p^2-1)I\right)$, where $p = 2n$.  Analogously, for any $n \in \ZZ_{ < 0}$ and $p = 2n$ we set: 
$$Y := \frac{1}{4}\left(C_- - (p^2-1)I\right): \quad M_{p+1} = V_- \lar V_- = M_{p-1}.$$
It can be checked that $X, Y, H \in \End_{\CC}(M)$ satisfy the identities \eqref{E:DefiningIdentities} and hence define the structure of a $\lieg$-module on the vector space $M$. 

Let $(u_1^\pm, \dots, u_{m_\pm}^\pm)$ be bases of $V_\pm$. Then $u_1^+, \dots, u_{m_+}^+, u_1^-, \dots, u_{m_-}^- \in M$ generate $M$ as a $\lieg$-module. Finally, it can be shown that the Casimir element $C$ defined by \eqref{eq:def:casimir_element} acts on $M_{2n +1} = V_+$ as $C_+$ and on $M_{-2n - 1} = V_-$ as $C_-$ for any $n \in \ZZ_{\ge 0}$. It follows that $(C + I)^m M = 0$, where $m = \max\{m_+, m_-\}$. Summing up, 
$M \in \HC_0(\lieg, K)$ and $\EE(M) \cong W$. Hence, the functor $\EE$ is fully faithful and essentially surjective, hence an equivalence of categories. 
\end{proof}

\paragraph{The Gelfand quiver}

Let~$l \in \ZZ_{> 0}$ and~$M \in \HC_l(\lieg, K)$, whose diagram description is~\eqref{E:DiagramHCmodule}. We use the following notation:
\begin{gather}
\label{eq:def:XYrestrictions}
\begin{aligned}
&
\begin{alignedat}{3}
&
  \Xm = X \big|_{M_{-l-1}}
\tx{,}\quad
&&
  \Xp = X \big|_{M_{l-1}}
\quad\tx{and}\quad
&&
  X_i = X \big|_{M_{-l-1 + 2i}} \tx{\ for\ }1 \le i \le l-1
\tx{;}\\
&
  \Ym = Y \big|_{M_{-l+1}}
\tx{,}\quad
&&
  \Yp = Y \big|_{M_{l+1}}
\quad\tx{and}\quad
&&
  Y_i = Y \big|_{M_{-l + 1 + 2i}} \tx{\ for\ }1 \le i \le l-1
\tx{;}
\end{alignedat}
\\&
  \Xz = X_{l-1} \cdots X_1
\quad\tx{and}\quad
  \Yz = Y_1 \cdots Y_{l-1}
\tx{.}
\end{aligned}
\end{gather}

\begin{lemma}
\label{L:UsefulEquality}
In the above notation~\eqref{eq:def:XYrestrictions}, the following identities are true:
\begin{gather*}
  \Xz \Xm \Ym = \Yp \Xp \Xz
\quad\tx{and}\quad
  \Xm \Ym \Yz = \Yz \Yp \Xp
\tx{.}
\end{gather*}
\end{lemma}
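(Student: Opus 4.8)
The plan is to recognize both identities as manifestations of the centrality of the Casimir element $C$ in $U(\lieg)$. I would start from two preliminary reinterpretations. First, because the action of $X$ and $Y$ on the weight decomposition $M=\bigoplus_i M_{-l-1+2i}$ is exactly the system of maps displayed in~\eqref{E:DiagramHCmodule}, the composite $\Xz=X_{l-1}\cdots X_1$ is nothing but the restriction of $X^{l-1}\in U(\lieg)$ to $M_{-l+1}$, regarded as a linear map $M_{-l+1}\to M_{l-1}$; symmetrically $\Yz$ is the restriction of $Y^{l-1}\in U(\lieg)$ to $M_{l-1}$, regarded as a map $M_{l-1}\to M_{-l+1}$. (When $l=1$ both are the identity of $M_0$ and the two asserted identities degenerate to $XY|_{M_0}=YX|_{M_0}$.) Second, from the definitions in~\eqref{eq:def:XYrestrictions} the composite $\Xm\circ\Ym$ is the restriction of $XY$ to $M_{-l+1}$, so~\eqref{E:KeyIdentities} (applied with ``$p$'' equal to $-l$) yields $\Xm\Ym=\tfrac14\bigl(C-\gamma I\bigr)\big|_{M_{-l+1}}$; likewise $\Yp\circ\Xp$ is the restriction of $YX$ to $M_{l-1}$, and~\eqref{E:KeyIdentities} (with ``$p$'' equal to $l$) gives $\Yp\Xp=\tfrac14\bigl(C-\gamma I\bigr)\big|_{M_{l-1}}$, using $\gamma=l^2-1$. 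Note that $C-\gamma I$ preserves every weight space, so all these restrictions make sense.

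For the first identity I would then simply compose these descriptions. The left-hand side becomes $\Xz\,\Xm\Ym = X^{l-1}\big|_{M_{-l+1}}\circ\tfrac14\bigl(C-\gamma I\bigr)\big|_{M_{-l+1}} = \tfrac14\bigl(X^{l-1}(C-\gamma I)\bigr)\big|_{M_{-l+1}}$, and the right-hand side becomes $\Yp\Xp\,\Xz = \tfrac14\bigl(C-\gamma I\bigr)\big|_{M_{l-1}}\circ X^{l-1}\big|_{M_{-l+1}} = \tfrac14\bigl((C-\gamma I)X^{l-1}\bigr)\big|_{M_{-l+1}}$. Since $C$ lies in the centre $\frakz$ of $U(\lieg)$, the elements $X^{l-1}(C-\gamma I)$ and $(C-\gamma I)X^{l-1}$ are equal in $U(\lieg)$ and hence act identically on $M$, which gives the claim.

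The second identity I would prove in exactly the same fashion: $\Xm\Ym\,\Yz = \tfrac14\bigl(C-\gamma I\bigr)\big|_{M_{-l+1}}\circ Y^{l-1}\big|_{M_{l-1}} = \tfrac14\bigl((C-\gamma I)Y^{l-1}\bigr)\big|_{M_{l-1}}$, whereas $\Yz\,\Yp\Xp = Y^{l-1}\big|_{M_{l-1}}\circ\tfrac14\bigl(C-\gamma I\bigr)\big|_{M_{l-1}} = \tfrac14\bigl(Y^{l-1}(C-\gamma I)\bigr)\big|_{M_{l-1}}$, and the two agree again because $C$ is central.

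I do not anticipate a real obstacle here; the only care needed is in tracking the sources and targets of the various restricted operators (and the degenerate case $l=1$) and in the elementary observation that everything may be computed inside $U(\lieg)$, where centrality of $C$ does all the work. A more pedestrian alternative would be to commute $Y$ past $X^{l-1}$ via $[Y,X]=-H$ and bookkeep the resulting weight-dependent corrections, but the Casimir argument avoids this entirely.
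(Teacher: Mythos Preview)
Your argument is correct, and in fact more streamlined than the paper's. Both proofs hinge on the same observation---that $\Xm\Ym$ and $\Yp\Xp$ are the restrictions of $\tfrac14(C-\gamma I)$ to $M_{-l+1}$ and $M_{l-1}$ respectively, and hence commute with $\Xz$ and $\Yz$ by centrality of $C$---but the paper takes a detour: it first chooses bases so as to reduce to the special case $X_1=\cdots=X_{l-1}=I$ (where $\Xz=I$ and the identity becomes $\Xm\Ym=\Yp\Xp$, read off directly from~\eqref{E:KeyIdentities}), and then transports the conclusion back along an explicit isomorphism of diagrams. Your direct computation inside $U(\lieg)$ bypasses this coordinate reduction entirely. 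The paper's route has the mild advantage of making the isomorphism~\eqref{E:DiagramHCmoduleI}$\to$\eqref{E:DiagramHCmoduleII} visible, which is reused later in Theorem~\ref{T:Equivalence}; your route is shorter and conceptually cleaner for the lemma itself.
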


\begin{proof}
We only show  the first statement since a proof of the second one is completely analogous. 
The commutator relation
\begin{gather*}
  \big[ X^l, Y \big] = l X^{l-1} (H + l - 1)
\end{gather*}
implies:
\begin{gather*}
  X_\ast X_- Y_-
=
  X^l Y \big|_{M_{-l+1}}
=
  Y X^l \big|_{M_{-l+1}}
  +
  l X^{l-1} (H + l - 1) \big|_{M_{-l+1}}
=
  Y X^l \big|_{M_{-l+1}}
=
  Y_+ X_+ X_\ast
\tx{.}
\end{gather*}
\end{proof}

\begin{lemma}
\label{la:nilpotent_gelfand_quiver_representations}
Let~$l \in \ZZ_{> 0}$ and~$M$ be an object of\/~$\HC_l(\lieg, K)$ with diagram description
\begin{gather}
\label{E:DiagramHCmoduleI}
  \begin{tikzpicture}[baseline]
  \matrix(m)[matrix of math nodes,
  column sep = 2.5em,
  ampersand replacement=\&]
  {  \cdots \& M_{-l-1} \& M_{-l+1}
  \& \cdots \& M_{l-1}  \& M_{l+1}
  \& \cdots \\};
  \path[-stealth,bend left=15]
  (m-1-1.east |- m-1-2.north) edge                         (m-1-2.west |- m-1-2.north)
  (m-1-2.east |- m-1-2.north) edge node[above] {$\Xm$}     (m-1-3.west |- m-1-2.north)
  (m-1-3.east |- m-1-2.north) edge node[above] {$X_1$}     (m-1-4.west |- m-1-2.north)
  (m-1-4.east |- m-1-2.north) edge node[above] {$X_{l-1}$} (m-1-5.west |- m-1-2.north)
  (m-1-5.east |- m-1-2.north) edge node[above] {$\Xp$}     (m-1-6.west |- m-1-2.north)
  (m-1-6.east |- m-1-2.north) edge                         (m-1-7.west |- m-1-2.north)
  (m-1-7.west |- m-1-2.south) edge                         (m-1-6.east |- m-1-2.south)
  (m-1-6.west |- m-1-2.south) edge node[below] {$\Yp$}     (m-1-5.east |- m-1-2.south)
  (m-1-5.west |- m-1-2.south) edge node[below] {$Y_{l-1}$} (m-1-4.east |- m-1-2.south)
  (m-1-4.west |- m-1-2.south) edge node[below] {$Y_1$}     (m-1-3.east |- m-1-2.south)
  (m-1-3.west |- m-1-2.south) edge node[below] {$\Ym$}     (m-1-2.east |- m-1-2.south)
  (m-1-2.west |- m-1-2.south) edge                         (m-1-1.east |- m-1-2.south);
  \end{tikzpicture}
\end{gather}
Then
\begin{gather}
\label{E:PairofRepsgelfand}
  \begin{tikzpicture}[baseline]
  \matrix(m)[matrix of math nodes,
  column sep = 2.5em,
  ampersand replacement=\&]
  { M_{-l-1} \& M_{-l+1} \& M_{l+1} \\};
  \path[-stealth,bend left=15]
  (m-1-1.east |- m-1-1.north) edge node[above] {$\Xm$}          (m-1-2.west |- m-1-1.north)
  (m-1-2.east |- m-1-1.north) edge node[above] {$\Xp \Xz$}      (m-1-3.west |- m-1-1.north)
  (m-1-3.west |- m-1-1.south) edge node[below] {$\Xz^{-1} \Yp$} (m-1-2.east |- m-1-1.south)
  (m-1-2.west |- m-1-1.south) edge node[below] {$\Ym$}          (m-1-1.east |- m-1-1.south);
  \end{tikzpicture}
\quad\tx{and}\quad 
  \begin{tikzpicture}[baseline]
  \matrix(m)[matrix of math nodes,
  column sep = 2.5em,
  ampersand replacement=\&]
  { M_{-l-1} \& M_{l-1} \& M_{l+1} \\};
  \path[-stealth,bend left=15]
  (m-1-1.east |- m-1-1.north) edge node[above] {$\Yz^{-1} \Xm$} (m-1-2.west |- m-1-1.north)
  (m-1-2.east |- m-1-1.north) edge node[above] {$\Xp$}          (m-1-3.west |- m-1-1.north)
  (m-1-3.west |- m-1-1.south) edge node[below] {$\Yp$}          (m-1-2.east |- m-1-1.south)
  (m-1-2.west |- m-1-1.south) edge node[below] {$\Ym \Yz$}      (m-1-1.east |- m-1-1.south);
  \end{tikzpicture}
\end{gather}
are nilpotent representations of the Gelfand quiver~\eqref{E:GelfandQuiver} which are moreover isomorphic (as before, $\Xz = X_{l-1} \cdots X_1$ and 
$\Yz = Y_1 \cdots Y_{l-1}$).
\end{lemma}

\begin{proof}
The fact that~\eqref{E:PairofRepsgelfand} are representations of the Gelfand quiver follows from Lemma~\ref{L:UsefulEquality}. The requested isomorphism of representations is constructed  as follows:
\begin{gather}
\label{E:MorphismOfReps}
  \begin{tikzpicture}[baseline]
  \matrix(m)[matrix of math nodes,
  column sep = 4em, row sep = 4em,
  ampersand replacement=\&]
  {  M_{-l-1} \& M_{-l+1} \& M_{l+1}
  \\ M_{-l-1} \& M_{l-1}  \& M_{l+1}
  \\};
  \path[-stealth]
  (m-1-1) edge node[left]  {\small $T$}   (m-2-1)
  (m-1-2) edge node[right] {\small $\Xz$} (m-2-2)
  (m-1-3) edge node[right] {\small $I$}   (m-2-3);
  \path[-stealth,bend left=10]
  (m-1-1.east |- m-1-1.north) edge node[above] {\small $\Xm$}          (m-1-2.west |- m-1-1.north)
  (m-1-2.east |- m-1-1.north) edge node[above] {\small $\Xp \Xz$}      (m-1-3.west |- m-1-1.north)
  (m-1-3.west |- m-1-1.south) edge node[below] {\small $\Xz^{-1} \Yp$} (m-1-2.east |- m-1-1.south)
  (m-1-2.west |- m-1-1.south) edge node[below] {\small $\Ym$}          (m-1-1.east |- m-1-1.south);
  \path[-stealth,bend left=10]
  (m-2-1.east |- m-2-1.north) edge node[above] {\small $\Yz^{-1} \Xm$} (m-2-2.west |- m-2-1.north)
  (m-2-2.east |- m-2-1.north) edge node[above] {\small $\Xp$}          (m-2-3.west |- m-2-1.north)
  (m-2-3.west |- m-2-1.south) edge node[below] {\small $\Yp$}          (m-2-2.east |- m-2-1.south)
  (m-2-2.west |- m-2-1.south) edge node[below] {\small $\Ym \Yz$}      (m-2-1.east |- m-2-1.south);
  \end{tikzpicture}
\end{gather}
First note that both squares in the right part of~\eqref{E:MorphismOfReps} are commutative. To define $T$, note that
\begin{gather*}
  \Yz \Xz = Y_1 \dots Y_{l-1} X_{l-1} \dots X_1 = p(C_1)
\tx{,}
\end{gather*}
where $p(t) = \frac{1}{4^{l-1}}(t - \gamma_1) \dots (t-\gamma_{l-1}) \in \CC[t]$ for appropriate $\gamma_1, \dots, \gamma_{l-1} \in \ZZ$, which are  all different from $\gamma$. We put $T = p(C_0)$, where $C_0 = C\big|_{M_{-l-1}}$. It follows that $T$ is an isomorphism of vector spaces. The commutativity  of both left squares of the diagram~\eqref{E:MorphismOfReps} follows in particular from the fact that $C$ is a central element of $U(\lieg)$.
\end{proof}

\begin{theorem}%
\label{T:Equivalence}
For any~$l \in \ZZ_{> 0}$ there is an equivalence of categories
\begin{gather}
  \HC_l(\lieg, K) \xlongrightarrow{\EE} \Rep(\dA)
\end{gather}
given on the level of objects by the assignment
\begin{gather}\label{E:AssignmentGelfand}
  M \xlongmapsto{\EE}
  \left[
  \begin{tikzpicture}[baseline]
  \matrix(m)[matrix of math nodes,
  column sep = 2.5em,
  ampersand replacement=\&]
  { V_- \& V_\ast \& V_+ \\};
  \path[-stealth,bend left=15]
  (m-1-1.east |- m-1-2.north) edge node[above] {$A_-$} (m-1-2.west |- m-1-2.north)
  (m-1-2.west |- m-1-2.south) edge node[below] {$B_-$} (m-1-1.east |- m-1-2.south);
  \path[-stealth,bend right=15]
  (m-1-3.west |- m-1-2.north) edge node[above] {$A_+$} (m-1-2.east |- m-1-2.north)
  (m-1-2.east |- m-1-2.south) edge node[below] {$B_+$} (m-1-3.west |- m-1-2.south);
  \end{tikzpicture}
  \right]
= 
  \left[
  \begin{tikzpicture}[baseline]
  \matrix(m)[matrix of math nodes,
  column sep = 2.5em,
  ampersand replacement=\&]
  { M_{-l-1} \& M_{-l+1} \& M_{l+1} \\};
  \path[-stealth,bend left=15]
  (m-1-1.east |- m-1-1.north) edge node[above] {$\Xm$}          (m-1-2.west |- m-1-1.north)
  (m-1-2.east |- m-1-1.north) edge node[above] {$\Xp \Xz$}      (m-1-3.west |- m-1-1.north)
  (m-1-3.west |- m-1-1.south) edge node[below] {$\Xz^{-1} \Yp$} (m-1-2.east |- m-1-1.south)
  (m-1-2.west |- m-1-1.south) edge node[below] {$\Ym$}          (m-1-1.east |- m-1-1.south);
  \end{tikzpicture}
  \right]
\tx{.}
\end{gather}
\end{theorem}

\begin{proof} Let $M \xlongrightarrow{\psi} N$ be a morphism in $\HC_l(\lieg, K)$. We claim that the following diagram 
\begin{gather}
\label{E:MorphismOfRepsN}
  \begin{tikzpicture}[baseline]
  \matrix(m)[matrix of math nodes,
  column sep = 4em, row sep = 4em,
  ampersand replacement=\&]
  {  M_{-l-1} \& M_{-l+1} \& M_{l+1}
  \\ N_{-l-1} \& N_{l-1}  \& N_{l+1}
  \\};
  \path[-stealth]
  (m-1-1) edge node[left]  {\small $\psi_{-l-1}$}   (m-2-1)
  (m-1-2) edge node[left] {\small $\psi_{-l+1}$} (m-2-2)
  (m-1-3) edge node[right] {\small $\psi_{l+1}$}   (m-2-3);
  \path[-stealth,bend left=10]
  (m-1-1.east |- m-1-1.north) edge node[above] {\small $\Xm$}          (m-1-2.west |- m-1-1.north)
  (m-1-2.east |- m-1-1.north) edge node[above] {\small $\Xp \Xz$}      (m-1-3.west |- m-1-1.north)
  (m-1-3.west |- m-1-1.south) edge node[below] {\small $\Xz^{-1} \Yp$} (m-1-2.east |- m-1-1.south)
  (m-1-2.west |- m-1-1.south) edge node[below] {\small $\Ym$}          (m-1-1.east |- m-1-1.south);
  \path[-stealth,bend left=10]
  (m-2-1.east |- m-2-1.north) edge node[above] {\small $\Xm$} (m-2-2.west |- m-2-1.north)
  (m-2-2.east |- m-2-1.north) edge node[above] {\small $\Xp \Xz$}          (m-2-3.west |- m-2-1.north)
  (m-2-3.west |- m-2-1.south) edge node[below] {\small $\Xz^{-1} \Yp$}          (m-2-2.east |- m-2-1.south)
  (m-2-2.west |- m-2-1.south) edge node[below] {\small $\Ym $}      (m-2-1.east |- m-2-1.south);
  \end{tikzpicture}
\end{gather}
is commutative (abusing the notation, we use same symbols for the structure maps of $\lieg$-modules $M$ and $N$ in their diagram descriptions). It follows from \eqref{E:MorphismOfReprs}  that the left square of \eqref{E:MorphismOfRepsN} is commutative and $\psi_{l+1} X_+ X_\ast = X_+ X_\ast \psi_{-l+1}$. Next, $\psi_{l-1} X_\ast = X_\ast \psi_{-l+1}$ and $\psi_{l-1} Y_+ = Y_+ \psi_{l+1}$, implying that 
$$
\psi_{-l+1} X_\ast^{-1} Y_+ = X_\ast^{-1} \psi_{l-1} Y_+ = X_\ast^{-1}  Y_+ \psi_{l+1}.
$$
Hence, the diagram \eqref{E:MorphismOfRepsN} is indeed commutative, which implies that the assignment \eqref{E:AssignmentGelfand} is indeed functorial.

Next, similarly to the case $l = 0$ (see \eqref{E:MapsComponents}) one can show that all    linear maps $\psi_{-l+1+2n}$ can be expressed via 
$\psi_{-l-1}$, $\psi_{-l+1}$, $\psi_{l+1}$ and the structure maps $X$ and $Y$ of the modules $M$ and $N$. This implies that the functor $\EE$ is faithful. The fact that 
$\EE$ is full and essentially surjective can be shown analogously  to the proof of Proposition \ref{P:Equivalence}. See also \cite[Section 3.9]{Mazorchuk2010} for another description of an equivalence between $\HC_l(\lieg, K)$ and  $\Rep(\dA)$.
\end{proof}

\begin{remark}%
\label{R:SecondDescriptionOfTheEquivalence}

In what follows we shall use the fact that in the spirit of Lemma~\ref{la:nilpotent_gelfand_quiver_representations} 
\begin{gather}
  \EE(M)
\cong
  \begin{tikzpicture}[baseline]
  \matrix(m)[matrix of math nodes,
  column sep = 2.5em,
  ampersand replacement=\&]
  { M_{-l-1} \& M_{l-1} \& M_{l+1}. \\};
  \path[-stealth,bend left=15]
  (m-1-1.east |- m-1-1.north) edge node[above] {$\Yz^{-1} \Xm$} (m-1-2.west |- m-1-1.north)
  (m-1-2.east |- m-1-1.north) edge node[above] {$\Xp$}          (m-1-3.west |- m-1-1.north)
  (m-1-3.west |- m-1-1.south) edge node[below] {$\Yp$}          (m-1-2.east |- m-1-1.south)
  (m-1-2.west |- m-1-1.south) edge node[below] {$\Ym \Yz$}      (m-1-1.east |- m-1-1.south);
  \end{tikzpicture}
\end{gather}
\end{remark}

\section{Cyclic modules over \texpdf{$\dA$}{A} and \texpdf{$\dB$}{B}}
\label{sec:cyclic_modules}

We refer to~\cite{CurtisReiner} for basic notions and results from the representation theory of associative algebras. 
Let
\begin{gather}\label{E:Idempotents}
  e_\ast
=
  \begin{psmatrix}
  1 & 0 & 0 \\
  0 & 0 & 0 \\
  0 & 0 & 0
  \end{psmatrix}
\tx{,} \; 
  e_+
=
  \begin{psmatrix}
  0 & 0 & 0 \\
  0 & 1 & 0 \\
  0 & 0 & 0
  \end{psmatrix}
\; \tx{and\ } \;
  e_-
=
  \begin{psmatrix}
  0 & 0 & 0 \\
  0 & 0 & 0 \\
  0 & 0 & 1
  \end{psmatrix}
\end{gather}
be three primitive idempotents of the algebra $\dA$ corresponding to three vertices of the Gelfand quiver~\eqref{E:GelfandQuiver}. Then
\begin{gather*}
  P_\ast = \dA e_\ast
\cong 
  \begin{psmatrix}
  \dD \\
  \dD \\
  \dD
  \end{psmatrix}
\tx{,} \; 
  P_+ = \dA e_+
\cong
  \begin{psmatrix}
  \idm \\
  \dD \\
  \idm
  \end{psmatrix}
\; \tx{and\ } \; 
  P_- = \dA e_-
\cong
  \begin{psmatrix}
  \idm \\
  \idm \\
  \dD
  \end{psmatrix}
\end{gather*}
are indecomposable projective $\dA$--modules. Conversely, any indecomposable projective $\dA$--module is isomorphic to $P_\ast$ or $P_\pm$. 

\begin{definition}
A finite dimensional $\dA$--module $V$ is called \emph{cyclic} if there exists an epimorphism $P \thra V$ for an indecomposable projective $\dA$--module $P$.
\end{definition}

\begin{lemma}
Any  cyclic\/~$\dA$--module is indecomposable.
\end{lemma}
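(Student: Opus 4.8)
The plan is to deduce indecomposability from the stronger property that a cyclic module is \emph{local} (has a simple top). This rests on three observations: that each indecomposable projective $P_\ast$, $P_+$, $P_-$ is itself a local module; that a nonzero quotient of a finite-dimensional local module is again local; and that a local module is indecomposable. Since by definition a cyclic $\dA$-module is an epimorphic image of one of $P_\ast$, $P_+$, $P_-$, these combine immediately.

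First I would record that $P_\ast$, $P_+$, $P_-$ are local. The ring $\dA$ is module-finite over the complete local noetherian ring $\dD = \CC\llbracket t\rrbracket$, hence semiperfect (it is $t$-adically complete, so idempotents lift modulo the radical), with $\dA/\operatorname{rad}(\dA) \cong \CC \times \CC \times \CC$ and complete set of primitive orthogonal idempotents $e_\ast, e_+, e_-$. By the standard theory of semiperfect rings (see~\cite{CurtisReiner}), for each $P \in \{P_\ast, P_+, P_-\}$ the submodule $\operatorname{rad}(P) = \operatorname{rad}(\dA)\,P$ is the unique maximal submodule and $P/\operatorname{rad}(P)$ is the simple module concentrated at the corresponding vertex of the Gelfand quiver. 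One can also see this directly from the explicit column descriptions of $P_\ast$, $P_+$, $P_-$ given above: multiplying by $\operatorname{rad}(\dA)$ contracts the entry $\dD$ at the distinguished vertex to $\idm$, with simple quotient $\dD/\idm \cong \CC$.

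Next, let $P$ be a finite-dimensional local module and $V = P/L \neq 0$ a quotient. As $P$ is finitely generated, every proper submodule lies in the unique maximal submodule $\operatorname{rad}(P)$, so in particular $L \subseteq \operatorname{rad}(P)$; since the submodules of $V$ are precisely those of $P$ containing $L$, the module $V$ has the single maximal submodule $\operatorname{rad}(P)/L = \operatorname{rad}(V)$ and $V/\operatorname{rad}(V) \cong P/\operatorname{rad}(P)$ is simple, i.e.\ $V$ is local. Finally, a finite-dimensional local module $V$ is indecomposable: a splitting $V = V_1 \oplus V_2$ with $V_1, V_2 \neq 0$ would give, via Nakayama's lemma, $\operatorname{rad}(V_i) \subsetneq V_i$ and hence $V/\operatorname{rad}(V) \cong V_1/\operatorname{rad}(V_1) \oplus V_2/\operatorname{rad}(V_2)$ a direct sum of two nonzero modules, contradicting simplicity of the top. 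Composing the three steps yields the lemma.

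I do not expect a real obstacle here; the argument is routine semiperfect-ring theory. The one point to keep in mind is that the indecomposability of the projective cover in the definition of ``cyclic'' is genuinely used — it enters precisely in the first step, since a decomposable projective (e.g.\ $P_\ast \oplus P_+$) has a non-simple top, and then its quotients need not be indecomposable.
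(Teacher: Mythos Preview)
Your argument is correct, but it follows a different path from the paper's. The paper argues by projective covers: if $V \cong V' \oplus V''$ with both summands nonzero, then taking projective covers $P' \twoheadrightarrow V'$ and $P'' \twoheadrightarrow V''$ gives a projective cover $P' \oplus P'' \twoheadrightarrow V$, so by uniqueness of projective covers over a semiperfect ring one gets $P \cong P' \oplus P''$, contradicting indecomposability of~$P$. Your approach instead shows directly that any cyclic module has simple top (is local), and deduces indecomposability from that. Both arguments sit at the same level of generality---they work over any semiperfect ring, as the paper itself remarks---and are in fact closely related, since over a semiperfect ring an indecomposable projective has simple top precisely because it is its own projective cover. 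Your route has the mild advantage of avoiding the existence/uniqueness theorem for projective covers and identifying the top of $V$ explicitly; the paper's route is shorter once that theorem is taken as known.

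One small slip: you write ``let $P$ be a finite-dimensional local module'', but the indecomposable projectives $P_\ast, P_+, P_-$ are \emph{not} finite-dimensional over~$\CC$ (they are free of rank~$3$ over $\dD = \CC\llbracket t\rrbracket$). Fortunately your argument does not actually use finite-dimensionality of~$P$: the key step ``$L$ proper implies $L \subseteq \operatorname{rad}(P)$'' needs only that $P$ is finitely generated with a unique maximal submodule, which holds here. Simply drop the adjective.
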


\begin{proof}
Assume that $V \cong V' \oplus V''$ is a direct sum decomposition with $V' \ne 0 \ne V''$. Take projective coverings $P' \xtwoheadrightarrow{f'} V'$ and $P'' \xtwoheadrightarrow{f''} V''$. Then $P' \oplus P'' \xtwoheadrightarrow{(f'\, f'')} V$ is a projective covering of $V$. It follows that $P \cong P' \oplus P''$ is decomposable. Contradiction. 
\end{proof}

\begin{remark}
The above result is true for arbitrary semi-perfect rings (i.e.\@ those rings for which any finitely generated left module has a projective cover).
\end{remark}

Our next goal is to give a classification of cyclic $\dA$--modules. 

\begin{definition}
A finitely generated $\dA$--module $Q$ is called a \emph{lattice} if it is free as a $\dD$--module. 
\end{definition}

The algebra $\dA$ belongs to the class of the so-called nodal orders; see~\cite{BurbanDrozdNodal} and in particular~\cite[Example 3.18]{BurbanDrozdNodal}. The study of lattices over orders is a classical subject of representation theory of associative algebras~\cite{CurtisReiner}. Consider the so-called hereditary envelope
\begin{gather}
  \mathfrak{H}
= 
  \begin{psmatrix}
  \dD & \idm & \idm\\
  \dD & \dD & \dD \\
  \dD & \dD & \dD
  \end{psmatrix}
\end{gather}
of the nodal order $\dA$ (see~\cite{BurbanDrozdNodal} for the definition). Then any indecomposable $\dA$--lattice is a direct summand of $\dA \oplus \mathfrak{H}$ (this result is true for arbitrary nodal orders; see~\cite{BurbanDrozdNodal}). As a consequence, there are only four isomorphism classes of  indecomposable $\dA$--lattices: indecomposable projective modules $P_\ast, P_+$ and $P_-$ and as well as
\begin{gather*}
  Q
= 
  \begin{psmatrix}
  \idm \\
  \dD \\
  \dD
  \end{psmatrix}
\tx{.}
\end{gather*}
This fact is essential for the classification of cyclic $\dA$--modules given below.

For a finite dimensional $\dA$--module
\begin{gather}%
\label{E:GelfandQuiverRepres}
  V
=
  \left[
  \begin{tikzpicture}[baseline]
  \matrix(m)[matrix of math nodes,
  column sep = 4em,
  ampersand replacement=\&]
  { V_- \& V_\ast \& V_+ \\};
  \path[-stealth,bend left=15]
  (m-1-1.east |- m-1-2.north) edge node[above] {$A_-$} (m-1-2.west |- m-1-2.north)
  (m-1-2.west |- m-1-2.south) edge node[below] {$B_-$} (m-1-1.east |- m-1-2.south);
  \path[-stealth,bend right=15]
  (m-1-3.west |- m-1-2.north) edge node[above] {$A_+$} (m-1-2.east |- m-1-2.north)
  (m-1-2.east |- m-1-2.south) edge node[below] {$B_+$} (m-1-3.west |- m-1-2.south);
  \end{tikzpicture}
  \right]
\end{gather}
we put: $C_\pm := B_\pm A_\pm$ and $C_\ast = A_+ B_+ = A_- B_-$. Let $m_\pm$ and $m_\ast$ be the nilpotency degrees of the endomorphisms $C_\pm$ and $C_\ast$, respectively, i.e.\@ $C_\ast^{m_\ast+1} = 0$ but $C_\ast^{m_\ast} \ne 0$ etc., whereas
\begin{gather*}
  \underline{\dim}(V)
=
  \bigl(\dim(V_-),\, \dim(V_\ast),\, \dim(V_+)\bigr) \in \ZZ_{\ge 0}^3
\end{gather*}
is the dimension vector of $V$. 

A cyclic $\dA$--module $V$ has \emph{type} $i \in \{+, -,  \ast\}$ if there exists an epimorphism $P_i \thra V$.

\begin{theorem}%
\label{T:GelfandQuiverCyclicReps}
Let $V$ be a cyclic $\dA$--module of type $i \in \left\{+, -, \ast\right\}$. Then its isomorphism class is uniquely determined by the corresponding dimension vector $\underline{\dim}(V)$. For $d \in \ZZ_{\ge 0}$, the possibilities are as follows:
\begin{enumerate}[label=(\Roman*)]
\item
\label{it:T:GelfandQuiverCyclicReps:ast}
\Needspace*{5\baselineskip}
Cyclic modules of type~``$\ast$''. Possible dimension vectors are:
\begin{enumerate}[label=(\alph*),itemsep=0pt,parsep=0pt]
\item  $(d, d+1, d)$, 
\item  $(d+1, d+1, d+1)$,  
\item  $(d, d+1, d+1)$,
\item  $(d+1, d+1, d)$.
\end{enumerate}
For all these representations we have: $m_\ast = d+1$. 

\item
\label{it:T:GelfandQuiverCyclicReps:plus}
\Needspace*{5\baselineskip}
Cyclic modules of type~``$+$''. Possible dimension vectors are:
\begin{enumerate}[label=(\alph*),itemsep=0pt,parsep=0pt]
\item  $(d, d, d+1)$.,
\item  $(d, d+1, d+1)$,
\item  $(d+1, d+1, d+1)$,
\item  $(d-1, d, d+1)$ (it exists only for $d\ge 1$).
\end{enumerate}
For these  cyclic modules we have: $m_+ = d+1$.

\item
\label{it:T:GelfandQuiverCyclicReps:minus}
\Needspace*{5\baselineskip}
A description of the cyclic modules of type~``$-$'' is symmetric to those given in~\ref{it:T:GelfandQuiverCyclicReps:plus}.
Specifically, possible dimension vectors are:
\begin{enumerate}[label=(\alph*),itemsep=0pt,parsep=0pt]
\item  $(d+1, d, d)$.,
\item  $(d+1, d+1, d)$,
\item  $(d+1, d+1, d+1)$,
\item  $(d+1, d, d-1)$ (it exists only for $d\ge 1$).
\end{enumerate}
For these  cyclic modules we have: $m_- = d+1$.
\end{enumerate}
\end{theorem}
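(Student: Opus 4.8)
My plan is to realise a cyclic module of type~$i$ as a cokernel $P_i/L$, use the lattice classification recalled above to pin down $L$, and then extract the dimension vectors by a rank count in the quiver picture, establishing rigidity (hence uniqueness) along the way. So fix $i \in \{+,-,\ast\}$ and a cyclic $\dA$--module $V$ of type~$i$, with epimorphism $\pi\colon P_i \thra V$ and $L = \ker\pi$. Since $V$ is finite dimensional and $P_i$ is free of rank~$3$ over the complete discrete valuation ring $\dD$, the kernel $L$ is a full-rank $\dD$--submodule of $P_i$, hence a finitely generated torsion-free, thus free, $\dD$--module carrying an $\dA$--action, i.e.\ an $\dA$--lattice of rank~$3$. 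As every indecomposable $\dA$--lattice ($P_\ast,P_+,P_-,Q$) has rank~$3$, a rank-$3$ lattice is necessarily indecomposable, so $L$ is isomorphic to one of these four explicit lattices. This already constrains $V$ severely and, conversely, yields a route to existence: for each target dimension vector I would exhibit an explicit embedding $L \hookrightarrow P_i$ whose cokernel realises it.

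\textbf{Rigidity, hence uniqueness.}
Writing $V = (V_-,V_\ast,V_+;A_\pm,B_\pm)$ with $C_\ast = A_\pm B_\pm$ and $C_\pm = B_\pm A_\pm$, the elementary identities $C_\ast A_\pm = A_\pm C_\pm$ and $C_\pm B_\pm = B_\pm C_\ast$ show that all four connecting maps are $\dD$--linear, with $t$ acting through the relevant $C$. Since $V$ is generated by a single element of $V_i$ and $e_i\dA e_i \cong \dD$, the space $V_i$ is a cyclic $\dD$--module, i.e.\ a single nilpotent Jordan block $\dD/(t^{m_i})$; I set $d = m_i - 1$, so $\dim V_i = d+1$ and $m_i = d+1$ as required by the statement. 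The other two vertex spaces are images of $V_i$ under composites of $\dD$--linear connecting maps, hence again single Jordan blocks. A $\dD$--linear surjection of single Jordan blocks is the canonical truncation up to base change, and a $\dD$--linear map with a prescribed $\dD$--linear composite is likewise rigid, so the whole representation is determined up to isomorphism by $\underline{\dim}(V)$. This is the uniqueness assertion.

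\textbf{Dimension vectors, type~$\ast$.}
With $i = \ast$, generator $v$, and $\dim V_\ast = d+1$, one has $V_- = B_- V_\ast$ and $V_+ = B_+ V_\ast$, so $B_\pm$ are $\dD$--linear surjections out of a block of size $d+1$, whence $\dim V_\pm \le d+1$; and from $\mathrm{rank}(C_\ast) = d$ together with $C_\ast = A_\pm B_\pm$ one gets $d \le \mathrm{rank}(B_\pm) = \dim V_\pm$. Thus each of $\dim V_-, \dim V_+$ equals $d$ or $d+1$, the two legs being independent, which gives exactly the four dimension vectors in part~\ref{it:T:GelfandQuiverCyclicReps:ast}, all with $m_\ast = d+1$. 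For existence I would write $A_\pm, B_\pm$ explicitly as truncation and multiplication-by-$t$ maps between $\dD/(t^{d+1})$ and $\dD/(t^d)$ and check the relation $A_-B_- = A_+B_+$ together with nilpotency.

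\textbf{Type~$+$, type~$-$, and the hard part.}
For $i = +$ the propagation runs along $+ \to \ast \to -$: here $V_\ast = A_+V_+$ and $V_- = B_-V_\ast$, so a first rank count (using $C_+ = B_+A_+$ of rank~$d$, and $A_+$ surjective) gives $\dim V_\ast \in \{d,d+1\}$, and a second one (using $C_\ast = A_-B_-$, with $V_\ast$ a single block of its size~$e$, so $\mathrm{rank}(C_\ast) = e-1$, and $B_-$ surjective) gives $\dim V_- \in \{e-1,e\}$. The four combinations produce precisely the list in part~\ref{it:T:GelfandQuiverCyclicReps:plus}, the vector $(d-1,d,d+1)$ being admissible only for $d\ge1$ since otherwise $e-1<0$; again $m_+ = d+1$, and existence is as before. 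Part~\ref{it:T:GelfandQuiverCyclicReps:minus} follows from the automorphism of the Gelfand quiver swapping the two legs $+\leftrightarrow-$. I expect the main obstacle to be exactly this enumeration: one must see clearly that for type~$\ast$ the two legs may drop \emph{independently}, whereas for type~$+$ the drop is \emph{sequential}, which is what produces the extra case $(d-1,d,d+1)$ occurring only in depth $d\ge1$; and one must confirm that the propagation ``$V_i$ cyclic $\Rightarrow$ all $V_j$ cyclic $\dD$--modules'', combined with the lattice classification, genuinely leaves no cyclic module unaccounted for. Verifying in the rigidity step that the residual freedom in a non-invertible connecting map is fully absorbed by base change also needs a little care.
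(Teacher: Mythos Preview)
Your argument is correct, but it takes a different route from the paper. Both proofs start by observing that the kernel~$L$ of $P_i \twoheadrightarrow V$ is an indecomposable $\dA$--lattice, hence one of~$P_\ast, P_+, P_-, Q$. From there, the paper exploits this more directly: viewing $L$ and $P_i$ as sublattices of the unique simple $\mathfrak{K}\!\otimes\!\dA$--module, one sees that $\Hom_\dA(L, P_i)$ consists exactly of scalar multiplications by elements of~$\dD$, so every nonzero map $L \to P_i$ is injective and two such maps differ by a unit. Hence the cyclic quotients of~$P_i$ are precisely the cokernels $\cok(L \xrightarrow{t^n} P_i)$ for the four choices of~$L$ and admissible~$n$; existence, uniqueness, and the dimension vectors then fall out of a direct inspection of these explicit cokernels in~$\dD^3$. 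Your approach instead works intrinsically with the quiver representation: you use $e_i\dA e_i \cong \dD$ to see that~$V_i$ is a single Jordan block, propagate cyclicity along the connecting maps, constrain the remaining dimensions by rank inequalities, and argue rigidity by normalising the $\dD$--linear maps between cyclic $\dD$--modules. This is more elementary and entirely self-contained within the quiver picture, but it separates existence from uniqueness and requires the normalisation bookkeeping you flag as needing care (it does go through: one can normalise~$B_\pm$, respectively~$A_+$ then~$B_-$, using automorphisms of the outer vertices without disturbing earlier choices). The paper's route is shorter because the identification of $\Hom_\dA(L,P_i)$ with a principal ideal of~$\dD$ packages existence and uniqueness into a single statement.
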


\begin{proof}
Let $V$ be a cyclic $\dA$--module and $P \xtwoheadrightarrow{\pi} V$ be its projective cover. Consider the short exact sequence
\begin{gather*}
  0 \lar N \xlongrightarrow{\imath} P \xlongrightarrow{\pi} V \lar 0
\tx{.}
\end{gather*}
Note that $P$ is free viewed as a $\dD$--module. 
Since $\dD$ is a principal ideal domain and $N$ is a $\dD$-submodule of $P$, $N$ is free over  $\dD$ as well and  thus is an $\dA$--lattice. As $V$ is a $\dD$--module of finite length, we have: 
\begin{gather*}
  \mathfrak{K} \otimes N
\cong
  \mathfrak{K} \otimes P \cong U
:=
  \begin{psmatrix}
  \mathfrak{K} \\
  \mathfrak{K} \\
  \mathfrak{K}
  \end{psmatrix}
\tx{,}
\end{gather*}
where $\mathfrak{K} = \CC\llbrace t\rrbrace$. Hence, $N$ is an \emph{indecomposable} $\dA$--lattice, i.e.\@ $N \in \big\{P_\ast, P_\pm, Q \big\}$. Next, we can view $P$ and $N$ as \emph{subsets} of $U$. Then we get:
\begin{gather*}
  \Hom_{\dA}(N, P)
=
  \big\{a \in \mathfrak{K} \,:\, a N \subseteq P  \big\}
=
  \big\{a \in \dD \,:\, a N \subseteq P  \big\}
\tx{.}
\end{gather*}
It follows that any non-zero $\dA$--module map $N \ra P$ is injective and its cokernel is a cyclic $\dA$--module.  We hence have the following cases, corresponding to the enumeration in the Theorem~\ref{T:GelfandQuiverCyclicReps}.
\begin{enumerate}[label=(\Roman*)]
\item
Cyclic modules of type ``$\ast$''.
\begin{enumerate}[label=(\alph*)]
\item
\label{it:representation_gelfand:Ia}
The dimension vector of
\begin{gather*}
  \cok\Big(Q \xrightarrow{t^{d}} P_\ast\Big)
= 
  \cok\Big(
  \begin{psmatrix}
  \idm \\
  \dD \\
  \dD
  \end{psmatrix}
  \xrightarrow{t^{d}}
  \begin{psmatrix}
  \dD \\
  \dD \\
  \dD
  \end{psmatrix}
  \Big)
\end{gather*}
is $(d, d+1, d)$. 

\item
\label{it:representation_gelfand:Ib}
The dimension vector of
\begin{gather*}
  \cok\Big(P_\ast \xrightarrow{t^{d+1}} P_\ast\Big)
= 
  \cok\Big(
  \begin{psmatrix}
  \dD \\
  \dD \\
  \dD
  \end{psmatrix}
  \xrightarrow{t^{d+1}}
  \begin{psmatrix}
  \dD \\
  \dD \\
  \dD
  \end{psmatrix}
  \Big)
\end{gather*}
is $(d+1, d+1, d+1)$. 

\item
\label{it:representation_gelfand:Ic}
The dimension vector of
\begin{gather*}
  \cok\Big(P_- \xrightarrow{t^{d}} P_\ast\Big)
= 
  \cok\Big(
  \begin{psmatrix}
  \idm \\
  \idm \\
  \dD
  \end{psmatrix}
  \xrightarrow{t^{d}}
  \begin{psmatrix}
  \dD \\
  \dD \\
  \dD
  \end{psmatrix}
  \Big)
\end{gather*}
is $(d, d+1, d+1)$. 

\item
\label{it:representation_gelfand:Id}
The dimension vector of
\begin{gather*}
  \cok\Big(P_+ \xrightarrow{t^{d}} P_\ast\Big)
= 
  \cok\Big(
  \begin{psmatrix}
  \idm \\
  \dD \\
  \idm
  \end{psmatrix}
  \xrightarrow{t^{d}}
  \begin{psmatrix}
  \dD \\
  \dD \\
  \dD
  \end{psmatrix}
  \Big)
\end{gather*}
is $(d+1, d+1, d)$.
\end{enumerate}

\item
\begin{enumerate}[label=(\alph*)]
\item
\label{it:representation_gelfand:IIa}
The dimension vector of
\begin{gather*}
  \cok\Big(P_\ast \xrightarrow{t^{d+1}} P_+\Big)
= 
  \cok\Big(
  \begin{psmatrix}
  \dD \\
  \dD \\
  \dD
  \end{psmatrix}
  \xrightarrow{t^{d+1}} 
  \begin{psmatrix}
  \idm \\
  \dD \\
  \idm
  \end{psmatrix}
  \Big)
\end{gather*}
is $(d, d, d+1)$. 

\item
\label{it:representation_gelfand:IIb}
The dimension vector of
\begin{gather*}
  \cok\Big(Q \xrightarrow{t^{d+1}} P_+\Big)
= 
  \cok\Big(
  \begin{psmatrix}
  \idm \\
  \dD \\
  \dD
  \end{psmatrix}
  \xrightarrow{t^{d+1}} 
  \begin{psmatrix}
  \idm \\
  \dD \\
  \idm
  \end{psmatrix}
  \Big)
\end{gather*}
is $(d, d+1, d+1)$. 

\item
\label{it:representation_gelfand:IIc}
The dimension vector of
\begin{gather*}
  \cok\Big(P_+ \xrightarrow{t^{d+1}} P_+\Big)
= 
  \cok\Big(
  \begin{psmatrix}
  \idm \\
  \dD \\
  \dD
  \end{psmatrix}
  \xrightarrow{t^{d+1}}
  \begin{psmatrix}
  \idm \\
  \dD \\
  \idm
  \end{psmatrix}
  \Big)
\end{gather*}
is $(d+1, d+1, d+1)$. 

\item
\label{it:representation_gelfand:IId}
The dimension vector of
\begin{gather*}
  \cok\Big(P_- \xrightarrow{t^{d}} P_+\Big)
= 
  \cok\Big(
  \begin{psmatrix}
  \idm \\
  \idm \\
  \dD
  \end{psmatrix}
  \xrightarrow{t^{d}}
  \begin{psmatrix}
  \idm \\
  \dD \\
  \idm
  \end{psmatrix}
  \Big)
\end{gather*}
is $(d-1, d, d+1)$ with $d \ge 1$.
\end{enumerate}
\end{enumerate}
The assertion about the nilpotency degrees $m_\pm$ and $m_\ast$ follows from the observation that  $C_\ast$ (respectively, $C_\pm$) is a nilpotent Jordan block of size $\dim(V_\ast)$ (respectively, $\dim(V_\pm)$). 
\end{proof}

\begin{remark}
Theorem \ref{T:GelfandQuiverCyclicReps} asserts that 
cyclic $\dA$--modules are determined by purely ``discrete data'': their types and dimension vectors. It is not true in general, even in the case of arbitrary nodal orders. For example, let $\mathfrak{N} = \CC\llbracket x, y\rrbracket \big\slash \mspace{-5mu}\big\slash  (xy)$.Then for $n, m \in \ZZ_{> 0}$ and $\lambda \in \CC^\ast$ the module $V_{(m, n, \lambda)} = \mathfrak{N} \big\slash (x^m - \lambda y^n)$ is cyclic. However, $V_{(m, n, \lambda)} \cong V_{(m', n', \lambda')}$ if and only if~$(m, n, \lambda) = (m', n', \lambda')$.
\end{remark}

\begin{remark} 
The problem of the classification of \emph{all} indecomposable finite dimensional $\dA$--modules  was posed by I.~Gelfand in~\cite{Gelfand}. 
In 1973 Nazarova and Roiter proved that $\Rep(\dA)$  is representation tame, reducing the problem of the description of its indecomposable objects to a certain problem of linear algebra (matrix problem). However, the correct combinatorics
of indecomposables was obtained only in 1988 by Bondarenko~\cite{mp1, mp}. Independently and about the same time, Crawley-Boevey gave another solution of Gelfand's problem using a  completely different approach~\cite{CB}. In 2004 Burban and Drozd proved that the derived category $\mathsf{D}^b\bigl(\Rep(\dA)\bigr)$ is also representation tame and gave an explicit description of the corresponding indecomposable complexes of projective modules~\cite{Nodal}. See also~\cite{Gnedin} for further elaborations of this approach. 
\end{remark}

\begin{remark}
\label{rm:CyclicQuiverCyclicReps}
Since $\dB$ is a hereditary order, any indecomposable finite dimensional $\dB$--module is cyclic; see for instance \cite{Drozd}.
Let 
\begin{gather*}
  V
=
  \left[
  \begin{tikzpicture}[baseline]
  \matrix(m)[matrix of math nodes,
  column sep = 4em,
  ampersand replacement=\&]
  { V_- \& V_+ \\};
  \path[-stealth,bend left=15]
  (m-1-1.east |- m-1-1.north) edge node[above] {$A_+$} (m-1-2.west |- m-1-1.north)
  (m-1-2.west |- m-1-1.south) edge node[below] {$A_-$} (m-1-1.east |- m-1-1.south);
  \end{tikzpicture}
  \right]
\end{gather*}
be a nilpotent representation of~\eqref{E:CyclicQuiver}. We put: $C_\pm := A_\mp A_\pm$. Let $m_\pm$ be the nilpotency degree of $C_\pm$ and $\underline{\dim}(V) = \bigl(\dim(V_-), \dim(V_+)\bigr)$ the dimension vector of $V$. Let
\begin{gather*}
  e_+
=
  \begin{psmatrix}
  1 & 0 \\
  0 & 0
  \end{psmatrix}
  \in \dB
\quad\tx{and}\quad 
  e_-
=
  \begin{psmatrix}
  0 & 0 \\
  0 & 1
  \end{psmatrix}
  \in \dB
\tx{.}
\end{gather*}
We put:
\begin{gather*}
  P_+ = \dB e_+
\cong
  \begin{psmatrix}
  \dD \\
  \dD
  \end{psmatrix}
\quad\tx{and}\quad 
  P_- = \dB e_-
\cong
  \begin{psmatrix}
  \idm \\
  \dD
  \end{psmatrix}
\tx{.}
\end{gather*}
Cyclic $\dB$--modules are up to isomorphism characterized by their types and dimension vectors. Let $d \in \ZZ_{\ge 0}$.
\begin{enumerate}[label=(\Roman*)]
\item Cyclic modules of type~``$+$'' are the following.
\begin{enumerate}[label=(\alph*)]
\item
\label{it:representation_cyclic:Ia}
The dimension vector of
\begin{gather*}
  \cok\Big(P_- \xrightarrow{t^{d}} P_+\Big)
= 
  \cok\Big(
  \begin{psmatrix}
  \idm \\
  \dD
  \end{psmatrix}
  \xrightarrow{t^{d}}
  \begin{psmatrix}
  \dD\\
  \dD \\
  \end{psmatrix}
  \Big)
\end{gather*}
is $(d, d+1)$. 

\item
\label{it:representation_cyclic:Ib}
The dimension vector of
\begin{gather*}
  \cok\Big(P_+ \xrightarrow{t^{d}} P_+\Big)
= 
  \cok\Big(
  \begin{psmatrix}
  \idm \\
  \dD
  \end{psmatrix}
  \xrightarrow{t^{d+1}}
  \begin{psmatrix}
  \dD\\
  \dD \\
  \end{psmatrix}
  \Big)
\end{gather*}
is $(d+1, d+1)$.
\end{enumerate}
For both types of cyclic modules we have: $m_+ = d + 1$.

\item The classification of cyclic $\dB$--modules of type~``$-$'' is analogous. 
\end{enumerate}
\end{remark}

\section{From polyharmonic Maa\ss{} forms to quiver representations}%
\label{sec:polyharmonic_to_quiver}

We describe and classify quiver representations associated to polyharmonic weak Maa\ss{} forms and provide in each case of the classification polyharmonic weak Maa\ss{} forms in all possible weights that yield the relevant quiver representation. The classification is done in Section~\ref{ssec:harish_chandra_modules_quiver_representations} by linking it to the classification of cyclic quiver representations from Section~\ref{sec:cyclic_modules}. Our modular realizations are given in Section~\ref{sec:modular_realization}.

The representation theoretic labels that arise in Section~\ref{sec:cyclic_modules} do not match the ones for harmonic Maa\ss{} forms that appeared in work of Bringmann--Kudla~\cite{BK}. We provide a translation in Section~\ref{ssec:classification_BK_labels}.

\subsection{Polyharmonic Maa\ss{} forms}%
\label{ssec:polyharmonic_maass_forms}

The group $G = \SL{2}(\RR)$ acts on the upper half-plane $\HH = \{\tau = x + i y \in \CC \,:\, \Im(\tau) > 0\}$ by M\"obius transformations:
\begin{gather*}
  \begin{psmatrix}
  a & b \\
  c & d
  \end{psmatrix}
  \tau
=
  \mfrac{a\tau + b}{c\tau + d}
\tx{.}
\end{gather*}
For~$k \in \ZZ$, we obtain the weight-$k$ slash action on functions~$f : \HS \ra \CC$ defined by
\begin{gather*}
  \big( f \big|_k g \big)(\tau)
=
  (c \tau + d)^{-k} f(g \tau)
\tx{,}\quad
  g = \begin{psmatrix} a & b \\ c & d \end{psmatrix} \in \SL{2}(\RR)
\tx{.}
\end{gather*}

Given~$k \in \ZZ$, we consider the weight-$k$ \emph{Maa\ss{} lowering and raising operators}
\begin{gather*}
  \rmL_k = - 2 i y^2 \partial_{\ov\tau}
\quad\tx{and}\quad
  \rmR_k = 2 i \partial_\tau + k y^{-1}
\tx{,}
\end{gather*}
and the~\emph{Maa\ss{}--Laplace operator}
\begin{gather}\label{eq:laplace}
  \Delta_k
=
  -y^2 (\partial_x^2 + \partial_y^2) + ik y (\partial_x +i  \partial_y)
=
  -\rmR_{k-2}\, \rmL_k
=
  - (\rmL_{k+2} \rmR_k+k)
\tx{,}
\end{gather}
acting on the space of smooth functions on $\HH$.  We also define iterated versions of the lowering and raising operators by
\begin{gather*}
  \rmL_k^j
=
  \rmL_{k-2(j-1)} \circ\,\cdots\,\circ \rmL_{k-2}\circ \rmL_k
\quad\tx{and}\quad
  \rmR_k^j
=
  \rmR_{k+2(j-1)} \circ\,\cdots\,\circ \rmR_{k+2}\circ \rmR_k
\tx{.}
\end{gather*}

For~$k \in \ZZ_{\le 0}$, we define the \emph{flipping operator}
\begin{gather}
\label{eq:def:flipping_operator}
  \rmF_k\, f
=
  \mfrac{y^{-k}}{(-k)!}\,
  \ov{\rmR_k^{-k}\, f}
\tx{.}
\end{gather}
Closely related to this, we record that for all functions~$f$ on~$\HS$ and all~$k \in \ZZ$, $\ga \in \SL{2}(\RR)$, we have
\begin{gather}
\label{eq:mirror_covariance}
  y^k\, \ov{\big( f \big|_k \ga \big)}
=
  \big(y^k\, \ov{f} \big) \big|_{-k} \ga
\tx{.}
\end{gather}

\begin{definition}%
\label{def:polyharmonic_maass_forms}
Let $\Gamma \subset \SL{2}(\ZZ)$ be a finite index subgroup and $\rho :\, \Gamma \ra \GL{}(V(\rho))$ a representation on a finite dimensional, complex vector space~$V(\rho)$. We call a smooth function~$f :\, \HS \ra V(\rho)$ a polyharmonic weak Maa\ss{} form for~$\rho$ of weight~$k \in \ZZ$ and depth~$d \in \ZZ_{\ge 0}$, if
\begin{enumerateroman}
\item $f |_k \ga = \rho(\ga) f$ for all~$\ga \in \Ga$,
\item $\Delta^{d+1}_k\,f = 0$,
\item\label{item:growth} $\| ( f |_k \ga) (\tau) \| \ll \exp(a y)$ as~$y \ra \infty$ for some~$a \in \RR$, some norm~$\|\,\cdot\,\|$ on~$V(\rho)$, and all~$\ga \in \SL{2}(\ZZ)$.
\end{enumerateroman}
\end{definition}

The space of such functions will be denoted by~$\rmH_k^{(d)}(\rho)$. If~$\rho$ is the trivial representation, we may instead write~$\rmH_k^{(d)}(\Ga)$. We say that a nonzero~$f \in \rmH_k^{(d)}(\rho)$ has exact depth~$d$ if~$d = 0$ or if~$f \not\in \rmH_k^{(d-1)}(\rho)$.

\begin{remark}
In analogy with the scalar-valued harmonic case, the first and third condition are compatible with products: Given polyharmonic weak Maa\ss{} forms~$f$ and~$g$ of weights~$k_1$ and~$k_2$ for representations~$\rho_1$ and~$\rho_2$, the tensor product yields a function~$f \otimes g$ that satisfies the first condition of Definition~\ref{def:polyharmonic_maass_forms} for weight~$k_1 + k_2$ and the representation~$\rho_1 \otimes \rho_2$, and the third one holds as well.

The second condition in Definition~\ref{def:polyharmonic_maass_forms}, however, is generally not compatible with products, which leads us to our considerations in Section~\ref{ssec:spectral_families_altering_weight}.
\end{remark}

\begin{remark}
In their seminal work \cite{bruinierfunke04} Bruinier and Funke introduced the notion of harmonic weak Maa\ss{} forms for finite index subgroups of~$\SL{2}(\ZZ)$ and its metaplectic cover (in our notation these are the forms of exact depth $d=0$).
\end{remark}

\begin{remark}
Lagarias--Rhoades~\cite{lagariasrhoades} investigated the space of polyharmonic Maa\ss{} forms and showed that the Taylor coefficients of certain Eisenstein series can be used to build bases for these spaces. Their results were refined by Matsusaka \cite{matsusaka} to apply to spaces of polyharmonic weak Maa\ss{} forms (i.e. forms that satisfy a growth condition as in \ref{item:growth} as opposed to a polynomial growth condition that Maa\ss{} forms satisfy). 
\end{remark}

\begin{example}\label{example:poly}
Following Verdier~\cite{verdier} Bringmann and Kudla introduce certain vector-valued harmonic weak Maa\ss{} forms in \cite{BK}. We let $k \in \ZZ_{\ge 0}$ and set $m=-k$. By $\Pol_m$ we denote the space of polynomials of degree at most $m$ in the variable $X$. The group $\mathrm{SL}_2(\RR)$ acts on $\Pol_m$ via
\begin{gather*}
  \big( \rho_m(\gamma) p \big)(X)
=
  (-cX+a)^m p\left(\mfrac{dX-b}{-cX+a}\right)
\tx{,}\ 
  \gamma = \begin{psmatrix} a&b\\c&d\end{psmatrix}
\tx{.}
\end{gather*}
Let $r\in\ZZ$ with $0\leq r\leq m$ and define
\begin{gather*}
  \frake_{r,m-r}(\tau)(X)
=
  \mfrac{(-1)^{m-r}}{r!} y^{r-m} (X-\tau)^r (X-\overline{\tau})^{m-r}
\tx{.}
\end{gather*}
For $\gamma=\begin{psmatrix} a&b\\c&d\end{psmatrix} \in \mathrm{SL}_2(\RR)$ we then have 
\begin{gather*}
  \frake_{r,m-r}(\gamma\tau)
=
  (c\tau+d)^{m-2r} \rho_m(\gamma) \frake_{r,m-r}(\tau)
\tx{,}
\end{gather*}
that is $\frake_{r,m-r}$ has weight $m-2r$. 
The holomorphic function $\frake_{m,0}$ is a harmonic weak Maa\ss{} form of weight $-m$ and type $\rho_m$. 
\end{example}

The functions $\frake_{r,m-r}$ in Example~\ref{example:poly} behave as follows under the lowering and raising operators:
\begin{gather}
\label{eq:BKbasis_maass_operators}
  \rmL_{m-2r}\frake_{r,m-r}
=
  (r+1)(m-r)\, \frake_{r+1,m-r-1}
\tx{,}\quad
  \rmR_{m-2r}\frake_{r,m-r}
=
  \frake_{r-1,m-r+1}
\tx{.}
\end{gather}
In particular,  $\rmL_{-m}\frake_{m,0}=0$ and $\rmR_{m}\frake_{0,m}=0$. For later reference we also note that
\begin{gather}
\label{eq:BKbasis_laplace}
  \Delta_{m-2r}\, \frake_{r,m-r}
=
  -(r+1)(m-r)\, \frake_{r,m-r}
\end{gather}
and
\begin{gather}
\label{eq:BKbasis_complexconj}
  y^{m-2r}\, \ov{\frake_{r,m-r}}
=
  \frac{(-1)^m (m-r)!}{r!}\, \frake_{m-r,r}
\tx{.}
\end{gather}
Finally, we note that the behavior of the functions in Example~\ref{example:poly} under the flipping operator is
\begin{gather}
  \rmF_{-m} \frake_{m,0} = (-1)^m\, \frake_{m,0}
\tx{.}
\end{gather}

\subsection{Automorphic forms}%
\label{ssec:automorphic_forms}

The map $G \slash K \lar \HH, [g] \mapsto g i$ is an isomorphism of real manifolds and $K$ is the stabilizer of the point $i \in \HH$. This allows us to lift polyharmonic weak Maa\ss{} forms to ``weak'' automorphic forms.

Recall the setting of Definition~\ref{def:polyharmonic_maass_forms}. We associate to $f \in \rmH^{(d)}_k(\rho)$ a smooth function
\begin{gather}
\label{eq:def:lift_to_automorphic_form}
  \varphi_f :\,
  G \ra V(\rho),\ g \lmto \big( f \big|_k g \big)(i)
\tx{.}
\end{gather}
We see that $\varphi_f$ satisfies the following properties:
\begin{gather}
\label{eq:lift_to_automorphic_form_properties}
\begin{alignedat}{2}
  \varphi_f(h g) &= \rho(h) \big(\varphi_f(g)\big)
&&
  \tx{\ for all\ } h \in \Gamma \tx{\ and\ } g \in G
\tx{;}\\
  \varphi_f(g k_\theta) &= \exp(ik\theta)\, \varphi_f(g)
&&
  \tx{\ for all\ } g \in G \tx{\ and\ } \theta \in \RR
\tx{,}
\end{alignedat}
\end{gather}
where $k_\theta = \exp(i \theta H) \in K$ is given in~\eqref{E:Rotation}. The second relation in~\eqref{eq:lift_to_automorphic_form_properties} implies that we have $H \varphi_f = k \varphi_f$. Further, the condition~$\Delta_k^{d+1}\,f = 0$ in Definition~\ref{def:polyharmonic_maass_forms} translates to
\begin{gather}
\label{eq:lift_to_automorphic_form_casimir}
  \bigl(C - (k^2-2k)\bigr)^{d + 1}\, \varphi_f = 0
\tx{,}
\end{gather}
where~$C$ is the Casimir element in~\eqref{eq:def:casimir_element}, which generates~$\frakz \subset U(\lieg)$.

Consider the vector space
\begin{gather}
\label{eq:def:automorphic_forms_arbitrary_growth}
  \kA\bigl(G, \Gamma, \rho\bigr)
:= 
  \left\{
  \HH \xlongrightarrow{\varphi} W \;:\;
  \begin{aligned}
  & \varphi(h g) = \rho(h)\bigl(\varphi(g)\bigr)
    \tx{\ for all\ }h \in \Gamma, g \in G
  \tx{,}\\
  & \varphi \tx{\ is $K$-finite and~$\frakz$-finite}
  \end{aligned}
  \right\}
\tx{.}
\end{gather}
The space $\kA\bigl(G, \Gamma, (W, \rho)\bigr)$ is naturally a $(\lieg, K)$--module and for any $f \in \rmH^{(d)}_k(\rho)$ we have: $\varphi_f \in \kA(G, \Gamma, \rho)$.

\begin{remark}
Compared to the definition of the space of automorphic forms the space in~\eqref{eq:def:automorphic_forms_arbitrary_growth} lacks an (exponential) growth condition mirroring the one in Definition~\ref{def:polyharmonic_maass_forms}. Since we are merely interested in $(\lieg, K)$--submodules that are generated by~$\varphi_f$ for $f \in \rmH^{(d)}_k(\rho)$, this does not affect our further discussion.
\end{remark}

\subsection{Harish-Chandra modules and quiver representations}%
\label{ssec:harish_chandra_modules_quiver_representations}

We continue to work in the setting of Definition~\ref{def:polyharmonic_maass_forms}. We attach to a function $f \in \rmH_k(\rho)$ the Harish-Chandra module $M(\varphi_f) \subset \kA(G, \Gamma, \rho)$ generated by the function $\varphi_f$.

Let
\begin{gather*}
  \gamma = k^2 - 2k = (k-1)^2 - 1
\tx{.}
\end{gather*}
We define~$l \in \ZZ_{\ge 0}$ according to the following cases:
\begin{gather}
\label{eq:def:block_parameter_from_weight}
\begin{alignedat}{2}
  l &= 1 -k \tx{,}\quad &&\tx{if\ } k < 1
\tx{;}\\
  l &= 0 = 1 - k = k - 1 \tx{,}\quad &&\tx{if\ } k = 1
\tx{;}\\
  l &= k - 1 \tx{,}\quad &&\tx{if\ } k > 1
\tx{.}
\end{alignedat}
\end{gather}
With this notion, we have~$\gamma = l^2-1$ and~$M(\varphi_f) \in \HC_l(\lieg, K)$. We assume that~$f \ne 0$ has exact depth~$d$. Then~$d+1 \in \ZZ_{> 0}$ is the nilpotency degree of $f$ with respect to~$\Delta_k$.

Our next goal is to characterize the quiver representation $\EE\bigl(M(\varphi_f)\bigr)$.

\begin{theorem}
\label{thm:corrspondence_polyharmonic_maass_to_harish_chandra}
For any nonzero~$f \in \rmH^{(d)}_k(\rho)$ of exact depth~$d$, the corresponding quiver representation $V_f := \EE\bigl(M(\varphi_f)\bigr)$ is cyclic. In particular, the Harish-Chandra module $M(\varphi_f)$ is indecomposable. 
\end{theorem}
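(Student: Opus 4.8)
The plan is to show that the quiver representation $V_f = \EE(M(\varphi_f))$ is generated by the image of a single element sitting in the "$\star$" vertex (or an appropriate outer vertex when $l=0$), and then invoke the equivalence of categories from Theorem~\ref{T:Equivalence} (or Proposition~\ref{P:Equivalence}) together with the lemma that cyclic $\dA$-modules are indecomposable. Since $M(\varphi_f)$ is generated over $U(\lieg)$ by the single vector $\varphi_f \in M(\varphi_f)_k$, the whole module is obtained by repeatedly applying $X$, $Y$, and $H$ to $\varphi_f$. The key observation is that $H$ acts by a scalar on each $M_n$, so effectively $M(\varphi_f)$ is generated by applying words in $X$ and $Y$ to $\varphi_f$; and because $X|_{M_{p-1}}$ and $Y|_{M_{p+1}}$ are isomorphisms for $p \neq \pm l$ (as recorded after~\eqref{E:DiagramHCmodule}), the entire "interior" and "outer" parts of the diagram are determined by the finitely many spaces near the weights $\pm l$. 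The translation of this under $\EE$ is precisely the statement that the corresponding $\dA$-module is cyclic of one of the four types.

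\textbf{First} I would locate the weight of $\varphi_f$ relative to the block. Using~\eqref{eq:def:block_parameter_from_weight}, the weight $k$ corresponds to one of the spaces $M_{-l-1}$, $M_{-l+1}$, $M_{l-1}$, or $M_{l+1}$ in the diagram description, depending on the sign of $k-1$: if $k < 1$ then $k = -l+1$; if $k > 1$ then $k = l+1$; if $k=1$ then $l=0$ and $k = \pm 1$ is an outer vertex of the two-cyclic quiver. In each case $\varphi_f$ lies in one of the three vertex spaces $V_-, V_\star, V_+$ of the quiver representation $\EE(M(\varphi_f))$ (after transporting through the isomorphism of Remark~\ref{R:SecondDescriptionOfTheEquivalence} when $k > 1$, so that $M_{l-1}$ becomes the middle vertex). **Then** I would argue that $V_f$ is generated as an $\dA$-module by the image of $\varphi_f$: since $M(\varphi_f) = U(\lieg)\varphi_f$, and since the path algebra $\dA$ (resp.\ $\dB$) records exactly the action of $X$ and $Y$ between the three relevant weight spaces, every element of $V_-, V_\star, V_+$ is an $\dA$-multiple of the generator. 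This requires checking that applying $X$ and $Y$ to $\varphi_f$ and then projecting to the three chosen weight spaces produces everything — which follows because moving across any non-critical edge is an isomorphism, so the three chosen spaces already "see" the whole module.

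\textbf{The main obstacle} will be the bookkeeping needed to confirm that the generator $\varphi_f$ really does surject onto the whole quiver representation rather than just a submodule — in other words, that the cyclic vector $\varphi_f \in M_k$ generates the correct one of $V_-, V_\star, V_+$ and that no part of the $\dA$-module is "missed." Concretely: when $k > 1$ and $\varphi_f \in M_{l-1}$ is the middle vertex, one must verify that applying powers of the critical operators $X_+ = X|_{M_{l-1}}$, $Y_+ = Y|_{M_{l+1}}$, together with $\Yz^{-1}\Xm$ (which lands in $M_{-l-1}$), and their composites, exhausts all three vertex spaces; the analogous check in the $k<1$ case uses $\varphi_f \in M_{-l+1}$ and the operators $\Xm, \Ym, \Xp\Xz, \Xz^{-1}\Yp$. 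This amounts to showing there is no proper $\dA$-submodule containing $\varphi_f$, which I would reduce to the fact that $M(\varphi_f)$, being $U(\lieg)$-generated by $\varphi_f$, has no proper $(\lieg,K)$-submodule containing $\varphi_f$, and then transport this property across the equivalence $\EE$. Once cyclicity of $V_f$ is established, indecomposability of $M(\varphi_f)$ is immediate: $\EE$ is an equivalence of categories, cyclic $\dA$-modules (resp.\ $\dB$-modules) are indecomposable by the lemma preceding Theorem~\ref{T:GelfandQuiverCyclicReps} (resp.\ by Remark~\ref{rm:CyclicQuiverCyclicReps}), and equivalences preserve indecomposability.
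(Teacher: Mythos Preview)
Your approach is essentially the paper's: use that $M(\varphi_f) = U(\lieg)\,\varphi_f$ is generated by words in $X,Y$ applied to $\varphi_f$, transport this across the equivalence $\EE$ to conclude $V_f$ is a cyclic $\dA$-module (or $\dB$-module), and then invoke the lemma that cyclic modules over these orders are indecomposable.

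There is one bookkeeping slip. In the case $k>1$ you correctly compute $k = l+1$, so $\varphi_f \in M_{l+1}$, which is the \emph{outer} vertex $V_+$, not the middle vertex $M_{l-1}$. There is no need to pass to Remark~\ref{R:SecondDescriptionOfTheEquivalence}; in both descriptions of $\EE(M)$ the space $M_{l+1}$ sits at $V_+$. Thus $V_f$ is cyclic of type ``$+$'' when $k>1$, of type ``$\ast$'' when $k<1$ (since then $\varphi_f \in M_{-l+1} = V_\ast$), and of type ``$+$'' over $\dB$ when $k=1$. This matters downstream, since the classification of cyclic modules in Theorem~\ref{T:GelfandQuiverCyclicReps} is organized by type.

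The ``main obstacle'' you anticipate is dispatched more quickly than you suggest. Once $\varphi_f$ sits in the correct vertex (say $V_\ast$ for $k<1$), the paper simply writes down $V_\ast = \lspan\{C_\ast^n \varphi_f : n \ge 0\}$ and $V_\pm = \lspan\{B_\pm C_\ast^n \varphi_f : n \ge 0\}$, which follows because $M(\varphi_f)$ is spanned by words in $X,Y$ on $\varphi_f$ and all edges away from $p = \pm l$ are isomorphisms; this gives the epimorphism $P_\ast \twoheadrightarrow V_f$ directly. Your alternative argument---transporting the property ``no proper submodule contains $\varphi_f$'' across the equivalence $\EE$---also works and is arguably cleaner, since equivalences preserve subobject lattices.
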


\begin{proof}
First note that  
\begin{equation}\label{E:HCmoduleSpan}
M(\varphi_f) = \bigl\langle X^{a_1} Y^{b_1} \dots X^{a_s} Y^{b_s} \varphi_f \, \big| \, s \in \ZZ_{\ge 0}, \ a_1, \dots, a_s, b_1, \dots, b_s \in \ZZ_{\ge 0}\bigr\rangle_{\mathbb{C}}.
\end{equation}
Recall that for any $k \ne 1$ we have an 
equivalence of categories $\HC_l(\lieg, K) \xrightarrow{\EE} \Rep(\dA)$, where $\dA$ is the Gelfand quiver (\ref{E:GelfandQuiver}). 

Consider the case $k < 1$. Then $l = 1-k$. We claim that $V_f$ is a cyclic $\dA$--module of type $\ast$. 
To show this, we use the description of the functor $\EE$ given in Theorem~\ref{T:Equivalence}. Let
\begin{gather*}
  V_f
=
  \left[
  \begin{tikzpicture}[baseline]
  \matrix(m)[matrix of math nodes,
  column sep = 2.5em,
  ampersand replacement=\&]
  { V_- \& V_\ast \& V_+ \\};
  \path[-stealth,bend left=15]
  (m-1-1.east |- m-1-2.north) edge node[above] {$A_-$} (m-1-2.west |- m-1-2.north)
  (m-1-2.west |- m-1-2.south) edge node[below] {$B_-$} (m-1-1.east |- m-1-2.south);
  \path[-stealth,bend right=15]
  (m-1-3.west |- m-1-2.north) edge node[above] {$A_+$} (m-1-2.east |- m-1-2.north)
  (m-1-2.east |- m-1-2.south) edge node[below] {$B_+$} (m-1-3.west |- m-1-2.south);
  \end{tikzpicture}
  \right]
= 
  \left[
  \begin{tikzpicture}[baseline]
  \matrix(m)[matrix of math nodes,
  column sep = 2.5em,
  ampersand replacement=\&]
  { M_{-l-1} \& M_{-l+1} \& M_{l+1} \\};
  \path[-stealth,bend left=15]
  (m-1-1.east |- m-1-1.north) edge node[above] {$\Xm$}          (m-1-2.west |- m-1-1.north)
  (m-1-2.east |- m-1-1.north) edge node[above] {$\Xp \Xz$}      (m-1-3.west |- m-1-1.north)
  (m-1-3.west |- m-1-1.south) edge node[below] {$\Xz^{-1} \Yp$} (m-1-2.east |- m-1-1.south)
  (m-1-2.west |- m-1-1.south) edge node[below] {$\Ym$}          (m-1-1.east |- m-1-1.south);
  \end{tikzpicture}
  \right]
\tx{.}
\end{gather*}
We have the following identifications: $A_{-} = X_{-}$, $B_{-} = Y_{-}$ and $B_{+} = X_{+} X_{\ast}$. 
Recall that $A_- B_-  = C_\ast = A_+ B_+$. It follows from (\ref{E:HCmoduleSpan}), the identities (\ref{E:KeyIdentities}) and the fact that $C$ is central in $U(\lieg)$ that the following statements are true:
\begin{gather}\label{E:SpacesCyclicModule}
  V_\ast
=
  \mathrm{span}_{\CC}\, \bigl\{C_\ast^n \varphi_f \, \big| \,  n \in \ZZ_{\ge 0} \bigr\}
\quad\tx{and}\quad 
  V_\pm
=
  \mathrm{span}_{\CC}\, \bigl\{B_\pm C_\ast^n \varphi_f \, \big| \, n \in \ZZ_{\ge 0} \bigr\}
\tx{.}
\end{gather}
Now let us view $\Rep(\dA)$ as the category of finite dimensional modules over the $\CC$-algebra $\dA$ given by (\ref{E:GelfandOrder}). Let $e_\ast$ be the primitive idempotent corresponding to the vertex $\ast$ of the Gelfand quiver; see (\ref{E:Idempotents}).  Note that for any $W \in \mathrm{Ob}\bigl(\Rep(\dA)\bigr)$,  we have an isomorphism of complex vector spaces 
$$
\Hom_{\dA}(\dA e_\ast, W) \cong e_\ast W = W_\ast,
$$
which assigns to a homomorphism $\dA e_\ast \stackrel{\vartheta}\lar W$  the element $\vartheta(e_\ast) \in W_\ast$. 

Consider now the homomorphism of $\dA$-modules $\dA e_\ast \xlongrightarrow{\xi} V_f$ which is determined by the condition  $\xi(e_\ast) = \varphi_f \in M_{-l+1} = V_\ast$.
It follows from $\dA$-linearity of $\xi$ and the description (\ref{E:SpacesCyclicModule}) of the vector spaces $V_\ast$ and $V_\pm$  that 
$
  \xi
$
is an epimorphism. As a consequence, $V_f$ is cyclic of type $\ast$, as asserted. 
Moreover, the nilpotency degree $d+1$ of $f$ is equal to $m_\ast$ (which is the nilpotency degree of $C_\ast$). Since $\EE$ is an equivalence of categories, the Harish-Chandra module $M(\varphi_f)$ is indecomposable. 

The case  $k > 1$ is analogous. We have: $k = l+1$ and $V_f$ is a cyclic $\dA$--module of type $+$. We have: $\varphi_f \in V_+$ and $d + 1 = m_+$  is the nilpotency degree of the endomorphism $C_+ = B_+ A_+$. 

The case $k = 1$ is exceptional since in this case we have an equivalence of categories $\HC_0(\lieg, K) \xrightarrow{\EE} \Rep(\dB)$, where $\dB$ is given by (\ref{E:CyclicQuiver}). In this case
\begin{gather*}
  V_f
=
  \left[
  \begin{tikzpicture}[baseline]
  \matrix(m)[matrix of math nodes,
  column sep = 4em,
  ampersand replacement=\&]
  { V_- \& V_+ \\};
  \path[-stealth,bend left=15]
  (m-1-1.east |- m-1-1.north) edge node[above] {$Z_+$} (m-1-2.west |- m-1-1.north)
  (m-1-2.west |- m-1-1.south) edge node[below] {$Z_-$} (m-1-1.east |- m-1-1.south);
  \end{tikzpicture}
  \right]
= 
  \left[
  \begin{tikzpicture}[baseline]
  \matrix(m)[matrix of math nodes,
  column sep = 4em,
  ampersand replacement=\&]
  { M_{-1} \& M_1 \\};
  \path[-stealth,bend left=15]
  (m-1-1.east |- m-1-1.north) edge node[above] {$X$} (m-1-2.west |- m-1-1.north)
  (m-1-2.west |- m-1-1.south) edge node[below] {$Y$} (m-1-1.east |- m-1-1.south);
  \end{tikzpicture}
  \right]
\end{gather*}
is a cyclic representation of $\dB$ of type $+$ and $d + 1 = m_+$ is the nilpotency degree of $C_+ := Z_+ Z_-$.
\end{proof}

\begin{remark}
It is not true in general that a $\lieg$-module generated by a cyclic vector is automatically indecomposable.\footnote{The second-named author is grateful to Volodymyr Mazorchuk for drawing his attention to this fact.} Indeed, let 
$U$ and $U'$ be two non-isomorphic finite dimensional simple representations of $\lieg$. Then there are precisely two non-trivial  $\lieg$-submodules of $U \oplus U'$: $\left\{(u, 0)\big| u \in U\right\}$ and $\left\{(0, u')\big| u' \in U'\right\}$ (this statement  follows easily from the fact any finite dimensional $\lieg$-module is a direct sum of simple ones and there are no non-zero morphisms between non-isomorphic simple representations). As a consequence, for any $0 \ne u \in U$ and $0 \ne u' \in U'$ the vector $(u, u') \in U \oplus U'$ is cyclic. 
\end{remark}

\subsection{Classification by representation theoretic labels}%
\label{ssec:classification_representation_labels}

Let $f \in \rmH^{(d)}(\rho)$ be a polyharmonic form of weight $k \in \ZZ$ and exact depth~$d$. Recall the operators~$C_\ast = A_- B_-$ and~$C_+ = Z_+ Z_-$ from the proof of Theorem~\ref{thm:corrspondence_polyharmonic_maass_to_harish_chandra}. The correspondence between the Harish-Chandra module $M(\varphi_f)$ and the cyclic quiver representation $V_f = \EE\bigl(M(\varphi_f)\bigr)$ classified in Theorem~\ref{T:GelfandQuiverCyclicReps} and Remark~\ref{rm:CyclicQuiverCyclicReps} is as follows. Labels prefixed with~G stand for representations of the Gelfand quiver, and those prefixed with~C stand for representations of the cyclic quiver.

\begin{enumerate}
\item[(GI)]
\Needspace*{5\baselineskip}
Let $k < 1$. Then $V_f$ be a cyclic representation of $\dA$ of type ``$\ast$'' and $m_\ast = d+1$. Writing $\psi_f := C_\ast^{d}(\varphi_f)$ we have the following cases: 
\begin{enumerate}[label=(\alph*),itemsep=2pt,parsep=0pt]
\item $V_f$ has type~\hyperref[it:representation_gelfand:Ia]{GIa}
  if $\Ym \psi_f  =  0$ and $\Xp \Xz \psi_f  =  0$.
\item $V_f$ has type~\hyperref[it:representation_gelfand:Ib]{GIb}
  if $\Ym \psi_f \ne 0$ and $\Xp \Xz \psi_f \ne 0$.
\item $V_f$ has type~\hyperref[it:representation_gelfand:Ic]{GIc}
  if $\Ym \psi_f  =  0$ but $\Xp \Xz \psi_f \ne 0$.
\item $V_f$ has type~\hyperref[it:representation_gelfand:Id]{GId}
  if $\Ym \psi_f \ne 0$ and $\Xp \Xz \psi_f  =  0$.
\end{enumerate}

\item[(GII)]
Let $k >  1$. Then $V_f$ be a cyclic representation of $\dA$ of type ``$+$'' and $m_+ = d+1$. 
\begin{enumerate}[label=(\arabic*)]
\item
\Needspace*{5\baselineskip}
Assume that $d = 0$. We have the following cases:
\begin{enumerate}[label=(\alph*),itemsep=2pt,parsep=0pt]
\item $V_f$ has type~\hyperref[it:representation_gelfand:IIa]{GIIa}
  if $\Yp \varphi_f  =  0$.
\item $V_f$ has type~\hyperref[it:representation_gelfand:IIb]{GIIb}
  if $\Yp \varphi_f \ne 0$ but $\Ym \Yz \Yp \varphi_f = 0$.
\item $V_f$ has type~\hyperref[it:representation_gelfand:IIc]{GIIc}
  if $\Ym \Yz \Yp \varphi_f \ne 0$.
\item Type~\hyperref[it:representation_gelfand:IId]{GIId} does not occur for $d = 0$.
\end{enumerate}

\item
\Needspace*{5\baselineskip}
Assume that $d \ge 1$. Writing~$\psi_f := C_\ast^{d-1} \Yp \varphi_f$, where we note that $\Xp \psi_f = C_+^{d} \varphi_f \ne 0$, we have the following cases:
\begin{enumerate}[label=(\alph*),itemsep=2pt,parsep=0pt]
\item $V_f$ has type~\hyperref[it:representation_gelfand:IIa]{GIIa}
  if $\Ym \Yz \psi_f \ne 0$ and $C_\ast \psi_f = 0$.
\item $V_f$ has type~\hyperref[it:representation_gelfand:IIb]{GIIb}
  if $\Ym \Yz \psi_f \ne  0$, $C_\ast \psi_f \ne 0$, and $\Ym \Yz C_\ast \psi_f = 0$.
\item $V_f$ has type~\hyperref[it:representation_gelfand:IIc]{GIIc}
  if $\Ym \Yz \psi_f \ne 0$ and $\Ym \Yz C_\ast \psi_f \ne 0$.
\item $V_f$ has type~\hyperref[it:representation_gelfand:IId]{GIId}
  if $\Ym \Yz \psi_f  =  0$.
\end{enumerate}
\end{enumerate}

\item[(CI)]
\Needspace*{4\baselineskip}
Let $k = 1$.  Then $V_f$ is a cyclic representation of $\dB$ of type ``$+$'' and $d = m_+ + 1$. Writing $\psi_f := C_+^{d} \varphi_f$ we have the following cases:
\begin{enumerate}[label=(\alph*),itemsep=2pt,parsep=0pt]
\item $V_f$ has type~\hyperref[it:representation_cyclic:Ia]{CIa}
  if $Y \psi_f  =  0$.
\item $V_f$ has type~\hyperref[it:representation_cyclic:Ib]{CIb}
  if $Y \psi_f \ne  0$.
\end{enumerate}
\end{enumerate}

\subsection{Classification by BK-style labels}%
\label{ssec:classification_BK_labels}

The classification in Section~\ref{ssec:classification_representation_labels} matches with the one obtained by Bringmann and Kudla in Theorem~5.2 of~\cite{BK} in the case $d = 0$. However, the labels, which are natural from the point of view of representation theory, do not agree with the ones of Bringmann--Kudla, which are natural from the perspective of modular forms. Table~\ref{tab:label_translation} provides a translation between them. In higher depth we encounter an additional Case~IIId, which corresponds to the representation theoretic Case~GIId.

Let $f \in \rmH^{(d)}(\rho)$ be a polyharmonic form of weight $k \in \ZZ$ and exact depth~$d$. The Harish-Chandra module~$M(\varphi_f)$ as in Section~\ref{ssec:classification_representation_labels} is classified using the labels of Bring\-mann--Kudla as follows:
\begin{enumerate}[label=(\Roman*)]
\item
\Needspace*{4\baselineskip}
$k < 1$.
\begin{enumerate}[label=(I\alph*),itemsep=0pt,parsep=0pt]
\item $\rmL\, \Delta^d\, f  =  0$ and~$\rmR^{1-k}\, \Delta^d\, f  =  0$.
\item $\rmL\, \Delta^d\, f  =  0$ and~$\rmR^{1-k}\, \Delta^d\, f \ne 0$.
\item $\rmL\, \Delta^d\, f \ne 0$ and~$\rmR^{1-k}\, \Delta^d\, f  =  0$.
\item $\rmL\, \Delta^d\, f \ne 0$ and~$\rmR^{1-k}\, \Delta^d\, f \ne 0$.
\end{enumerate}

\item
\Needspace*{4\baselineskip}
$k = 1$.
\begin{enumerate}[label=(II\alph*),itemsep=0pt,parsep=0pt]
\item $\rmL\, \Delta^d\,f = 0$.
\item $\rmL\, \Delta^d\,f \ne 0$.
\end{enumerate}

\item
\Needspace*{4\baselineskip}
$k > 1$.
\begin{enumerate}[label=(III\alph*),itemsep=0pt,parsep=0pt]
\item $\rmL^k\, \Delta^{d-1}\,f \ne 0$, if~$d \ge 1$, and $\rmL\, \Delta^d\, f = 0$.
\item $\rmL\, \Delta^d\,f \ne 0$ and~$\rmL^k\, \Delta^d\,f = 0$.
\item $\rmL^k\, \Delta^d\, f \ne 0$.
\item $d \ge 1$ and $\rmL^k\, \Delta^{d-1}\,f = 0$.
\end{enumerate}
\end{enumerate}

To translate the classification in Section~\ref{ssec:classification_representation_labels}, it suffices to recall the notation in~\eqref{eq:def:XYrestrictions_cyclic} and~\eqref{eq:def:XYrestrictions}. The operators~$X$ and~$Y$ restricted to~$M_k$ yield nonzero multiples of~$\rmL_k$ and~$\rmR_k$. In Cases~IIIb and~IIIc, we have removed extraneous conditions. For example, in Case~IIIc $\rmL^k\, \Delta^d\, f \ne 0$ implies $\rmL^k\, \Delta^{d-1}\,f \ne 0$, which is therefore omitted.

\begin{figure}[H]
\label{tab:label_translation}
\caption{Translation between labels assigned by Bringmann--Kudla and representation theoretic labels emerging in Section~\ref{sec:cyclic_modules}, including page references to their modular and representation theoretic realizations.}
\begin{tabular}{lcccccccccc}
\toprule
BK label &
\hyperref[sssec:modular_realization:Ia]  {Ia} &
\hyperref[sssec:modular_realization:Ib]  {Ib} &
\hyperref[sssec:modular_realization:Ic]  {Ic} &
\hyperref[sssec:modular_realization:Id]  {Id} &
\hyperref[sssec:modular_realization:IIa] {IIa} &
\hyperref[sssec:modular_realization:IIb] {IIb} &
\hyperref[sssec:modular_realization:IIIa]{IIIa} &
\hyperref[sssec:modular_realization:IIIb]{IIIb} &
\hyperref[sssec:modular_realization:IIIc]{IIIc} &
\hyperref[sssec:modular_realization:IIId]{IIId}
\\
mod.\@ form on p.\@ &
\pageref{sssec:modular_realization:Ia} &
\pageref{sssec:modular_realization:Ib} &
\pageref{sssec:modular_realization:Ic} &
\pageref{sssec:modular_realization:Id} &
\pageref{sssec:modular_realization:IIa} &
\pageref{sssec:modular_realization:IIb} &
\pageref{sssec:modular_realization:IIIa} &
\pageref{sssec:modular_realization:IIIb} &
\pageref{sssec:modular_realization:IIIc} &
\pageref{sssec:modular_realization:IIId}
\\
\cmidrule(r){2-5}
\cmidrule(r){8-11}
repr.\@ label &
\hyperref[it:representation_gelfand:Ia]{GIa} &
\hyperref[it:representation_gelfand:Ic]{GIc} &
\hyperref[it:representation_gelfand:Id]{GId} &
\hyperref[it:representation_gelfand:Ib]{GIb} &
\hyperref[it:representation_cyclic:Ia]{CIa} &
\hyperref[it:representation_cyclic:Ib]{CIb} &
\hyperref[it:representation_gelfand:IIa]{GIIa} &
\hyperref[it:representation_gelfand:IIb]{GIIb} &
\hyperref[it:representation_gelfand:IIc]{GIIc} &
\hyperref[it:representation_gelfand:IId]{GIId}
\\
repr. on p.\@ &
\pageref{it:representation_gelfand:Ia} &
\pageref{it:representation_gelfand:Ic} &
\pageref{it:representation_gelfand:Id} &
\pageref{it:representation_gelfand:Ib} &
\pageref{it:representation_cyclic:Ia} &
\pageref{it:representation_cyclic:Ib} &
\pageref{it:representation_gelfand:IIa} &
\pageref{it:representation_gelfand:IIb} &
\pageref{it:representation_gelfand:IIc} &
\pageref{it:representation_gelfand:IId}
\\
\bottomrule
\end{tabular}
\end{figure}

\section{Construction of spectral derivatives and modular realizations}%
\label{sec:spectral_taylor_coefficients}

In this section we give an existence theorem for spectral derivatives.  In particular,  we employ our theorem in Section~\ref{sec:modular_realization} to provide examples that realize all possible modules that occur in our classification.

\subsection{Commutation relations for differential operators}

To perform the calculations in Section~\ref{ssec:spectral_families_altering_weight}, we need several algebraic relations for the Maa\ss{} operators. We preserve the subscripts of all operators, but when viewing~$\Delta$, $\rmL$, and~$\rmR$ as graded operators they can be suppressed for clarity. In particular, the next relations can be verified by merely using the commutator~$[\rmL,\rmR] = -k$, which follows from~\eqref{eq:laplace} and in which~$k$ on the right hand side is viewed as a graded scalar as well.
We have the commutator relations for the Laplace operator:
\begin{gather}
\label{eq:laplace_commutators}	
\begin{alignedat}{4}
&
  \Delta_{k-2r}\, &&\rmL_k^r
&&=
  \rmL_k^r\, &&\big( \Delta_k - r(k - r - 1) \big)
\tx{,}\\
&
  \Delta_{k+2r}\, &&\rmR_k^r
&&=
  \rmR_k^r\, &&\big( \Delta_k + r(k + r - 1) \big)
\tx{;}
\end{alignedat}
\end{gather}
And for the Maa\ss{} lowering and raising operators:
\begin{gather}
\label{eq:maass_iterated_commutators}
\begin{alignedat}{4}
&
  \rmR_{k-2r} && \rmL_k^r
=
  - \rmL_k^{r-1} && \big( \Delta_k - (r-1)(k-r)) \big)
\tx{,}\\
&
  \rmL_{k+2r} && \rmR_k^r
=
  - \rmR_k^{r-1} && \big( \Delta_k + r(k + r-1) \big)
\tx{.}
\end{alignedat}
\end{gather}

The Bol Identity asserts that for an integer~$k \le 0$ and a weight-$k$ harmonic function~$f$ we have
\begin{gather}
\label{eqref:bol_identity}
  \rmR_k^{1-k}\, f
=
  (2 \pi \partial_\tau)^{1-k}\,f
\tx{.}
\end{gather}
A straightforward calculation shows that for smooth functions~$f$ and~$g$ and integers~$k_1$, $k_2$ we have
\begin{gather}
\label{eq:deltaonproducts}
  \Delta_{k_1+k_2} (f \cdot g)
=
  (\Delta_{k_1} f) \cdot g
\,+\,
  f \cdot (\Delta_{k_2}g)
\,-\,
  (\rmR_{k_1} f) \cdot (\rmL_{k_2} g)
\,-\,
  (\rmL_{k_1} f)\cdot (\rmR_{k_2} g)
\tx{.}
\end{gather}

\begin{proposition}
\label{prop:flipping_commutators}
The intertwining relations for the flipping operator and the Maa\ss{} operators, $k \le 0$, are
\begin{gather}
\label{eq:flipping_commutators}
\begin{aligned}
  \rmL_k\, \rmF_k \Delta_k
&{}=
  -(k-2)(k-1) \rmF_{k-2}\, \rmL_k
\tx{,}
\\
  - k (k+1)\,
  \rmR_k\, \rmF_k
&{}=
  \rmF_{k+2}\, \rmR_k (\Delta_k + k)
\tx{.}
\end{aligned}
\end{gather}
\end{proposition}
\begin{proof}
We begin with the first identity and evaluate by a short calculation
\begin{gather*}
  \rmL_k\, \mfrac{y^{-k}}{(-k)!} \ov{\rmR_k^{-k}\Delta_k\, f}
=
  \mfrac{y^{2-k}}{(-k)!}\,
  \ov{\rmR_k^{-k+1} \Delta_k\, f}
\tx{.}
\end{gather*}
Now we write $\Delta_k = - \rmR_{k-2}\rmL_k$ and see
\begin{gather*}
  \mfrac{y^{-k+2}}{(-k)!}\,
  \ov{\rmR_k^{-k+1} \Delta_k\, f}
= 
  \mfrac{-y^{-k+2}}{(-k)!}\,
  \ov{\rmR_k^{-k+1} \rmR_{k-2} \rmL_k\, f}
=
  -(-k+2)(-k+1)\, \rmF_{k-2} \rmL_k\, f
\tx{.}
\end{gather*}

For the second identity we calculate
\begin{align*}
  \rmR_k \rmF_k\, f
=
  \mfrac{y^{-k-2}}{(-k)!}\,
  \ov{\rmL_{-k} \rmR_k^{-k}\, f}
  \tx{.}
  \end{align*}
Then we use \eqref{eq:laplace_commutators} and obtain
\begin{align*}
  \mfrac{y^{-k-2}}{(-k)!}\,
  \ov{\rmR_k^{-k-1} \big(\Delta_k - k (k - k - 1)\big)\, f}
=
  \mfrac{-1}{(-k)(-k-1)}\,
  \rmF_{k+2} \rmR_k \big(\Delta_k + k\big)\, f
\tx{.}
\end{align*}
\end{proof}

\begin{proposition}%
\label{prop:flipping_laplace_and_involution}
We have
\begin{gather}
\label{eq:flipping_laplace_and_involution}
\begin{aligned}
  \Delta_k\, \rmF_k
&{}=
  \rmF_k\, \Delta_k
\tx{,}\\
  \rmF_k\, \rmF_k
&{}=
  \mfrac{(-1)^{-k}}{(-k)!^2}\,
  \big(\Delta_k + 1k\big)
  \big(\Delta_k + 2(k+1)\big)
  \cdots
  \big(\Delta_k + (-k)(-1)\big)\,
\tx{.}
\end{aligned}
\end{gather}
\end{proposition}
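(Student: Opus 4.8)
The plan is to reduce both identities to the commutation relations recorded in~\eqref{eq:laplace_commutators}--\eqref{eq:flipping_commutators} together with one bookkeeping lemma. The lemma is that for a smooth function $g$ of weight $-k$ — so that $y^{-k}\,\ov g$ has weight $k$ by~\eqref{eq:mirror_covariance} — one has
\begin{gather*}
  \rmR_k\bigl(y^{-k}\,\ov g\bigr)
=
  y^{-k-2}\,\ov{\rmL_{-k}\,g}
\tx{.}
\end{gather*}
This is the infinitesimal counterpart of~\eqref{eq:mirror_covariance} and is a one-line computation from $\rmR_k = 2i\partial_\tau + ky^{-1}$, $\rmL_{-k} = -2iy^2\partial_{\ov\tau}$ and $\partial_\tau\,\ov g = \ov{\partial_{\ov\tau}g}$. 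Since $\rmL_{-k}g$ is again of weight $-k-2$, I would iterate the lemma $-k$ times to obtain
\begin{gather*}
  \rmR_k^{-k}\bigl(y^{-k}\,\ov g\bigr)
=
  y^{k}\,\ov{\rmL_{-k}^{-k}\,g}
\tx{,}
\end{gather*}
with $\rmL_{-k}^{-k}$ the iterated lowering operator from Section~\ref{ssec:polyharmonic_maass_forms}.

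For $\Delta_k\,\rmF_k = \rmF_k\,\Delta_k$ I would argue algebraically. Using $\Delta_k = -\rmL_{k+2}\rmR_k - k$ from~\eqref{eq:laplace}, the second relation in~\eqref{eq:flipping_commutators}, the $r=1$ instance of~\eqref{eq:laplace_commutators} in the form $\rmR_k(\Delta_k+k) = \Delta_{k+2}\rmR_k$, and the first relation in~\eqref{eq:flipping_commutators} with $k$ replaced by $k+2$, one computes
\begin{align*}
  k(k+1)\,\Delta_k\,\rmF_k
&=
  \rmL_{k+2}\bigl(-k(k+1)\,\rmR_k\,\rmF_k\bigr) - k^2(k+1)\,\rmF_k
\\
&=
  \rmL_{k+2}\,\rmF_{k+2}\,\rmR_k(\Delta_k+k) - k^2(k+1)\,\rmF_k
\\
&=
  \rmL_{k+2}\,\rmF_{k+2}\,\Delta_{k+2}\,\rmR_k - k^2(k+1)\,\rmF_k
\\
&=
  -k(k+1)\,\rmF_k\,\rmL_{k+2}\,\rmR_k - k^2(k+1)\,\rmF_k
\\
&=
  k(k+1)\,\rmF_k\,\Delta_k
\tx{.}
\end{align*}
For $k \notin \{0,-1\}$ this gives the claim after dividing by $k(k+1)$; the weights $k=0$ ($\rmF_0$ is complex conjugation) and $k=-1$ are covered by a short direct check. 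Alternatively, and uniformly in $k\le 0$, one may use the coordinate identity $\Delta_k(y^{-k}\,\ov g) = y^{-k}\,\ov{(\Delta_{-k}-k)\,g}$ for $g$ of weight $-k$, apply it with $g = \rmR_k^{-k}f$, and invoke the $r=-k$ case of~\eqref{eq:laplace_commutators}, which yields $(\Delta_{-k}-k)\,\rmR_k^{-k} = \rmR_k^{-k}\,\Delta_k$ and hence the identity.

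For $\rmF_k\,\rmF_k$ I would start from~\eqref{eq:def:flipping_operator}, substitute the definition twice, apply the iterated lemma with $g = \rmR_k^{-k}f$, and collapse the double conjugation via $\ov{y^k\,\ov h} = y^k h$:
\begin{gather*}
  \rmF_k\,\rmF_k\,f
=
  \mfrac{y^{-k}}{(-k)!^2}\,\ov{\rmR_k^{-k}\bigl(y^{-k}\,\ov{\rmR_k^{-k}f}\bigr)}
=
  \mfrac{y^{-k}}{(-k)!^2}\,\ov{y^{k}\,\ov{\rmL_{-k}^{-k}\,\rmR_k^{-k}f}}
=
  \mfrac{1}{(-k)!^2}\,\rmL_{-k}^{-k}\,\rmR_k^{-k}\,f
\tx{.}
\end{gather*}
It then remains to recognize $\rmL_{-k}^{-k}\,\rmR_k^{-k}$ as a polynomial in $\Delta_k$. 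I would establish, by induction on $r\ge 0$, that
\begin{gather*}
  \rmL_{k+2r}^{r}\,\rmR_k^{r}
=
  (-1)^r \prod_{j=1}^{r}\bigl(\Delta_k + j(k+j-1)\bigr)
\tx{,}
\end{gather*}
the inductive step peeling off the outermost factor through $\rmL_{k+2r}\,\rmR_k^{r} = -\rmR_k^{r-1}\bigl(\Delta_k + r(k+r-1)\bigr)$ from~\eqref{eq:maass_iterated_commutators} and using that $\rmL_{k+2r-2}^{r-1}\,\rmR_k^{r-1}$ is, by induction, itself a polynomial in $\Delta_k$ (so the remaining factors multiply as polynomials). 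Taking $r=-k$ (so $k+2r=-k$) and substituting into the previous display gives exactly the stated product, since $j(k+j-1)$ runs through $1\cdot k,\ 2(k+1),\ \dots,\ (-k)(-1)$ for $j=1,\dots,-k$.

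The whole argument is a chain of routine manipulations; the only part demanding genuine care is the weight bookkeeping in the iteration of the $\rmR$/$\rmL$-conjugation lemma and verifying that the signs and the factor $(-k)!^{-2}$ emerge correctly. The single honest case distinction — the weights $k\in\{0,-1\}$ in the slick proof of the first identity — is minor and is avoided entirely by the coordinate argument.
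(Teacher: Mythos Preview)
Your proof is correct and uses the same ingredients as the paper --- the commutation relations~\eqref{eq:laplace_commutators}--\eqref{eq:flipping_commutators} and a conjugation lemma relating $\rmR_k$ on $y^{-k}\ov{g}$ to lowering/Laplace on $g$ --- but the organization differs in two places worth noting.

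For the first identity the paper writes $f=\Delta_k g$ locally (using ellipticity of $\Delta_k$) and expands $\Delta_k=-\rmR_{k-2}\rmL_k$, applying the first relation in~\eqref{eq:flipping_commutators} and then the second with $k\to k-2$; the resulting division is by $(k-2)(k-1)$, which never vanishes in the range $k\le 0$. Your ``slick'' route expands via $\Delta_k=-\rmL_{k+2}\rmR_k-k$ instead, avoiding the $\Delta$-preimage but incurring the factor $k(k+1)$, which forces the separate treatment of $k\in\{0,-1\}$. Your alternative coordinate argument via $\Delta_k(y^{-k}\ov g)=y^{-k}\ov{(\Delta_{-k}-k)g}$ sidesteps both the preimage and the case split, and is arguably the cleanest of the three.

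For the second identity the paper's conjugation lemma already contains an $\rmR_{-k-2}$ on the inside, so that each iteration directly produces a factor of the form $(\Delta+\text{const})$; you instead iterate the bare lemma $\rmR_k(y^{-k}\ov g)=y^{-k-2}\ov{\rmL_{-k}g}$ to reach $\rmL_{-k}^{-k}\rmR_k^{-k}$ and then peel off the $\Delta$-factors afterwards via~\eqref{eq:maass_iterated_commutators}. These are the same computation in a different order; your staging is slightly more transparent about where the polynomial in $\Delta_k$ comes from.
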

\begin{proof}
For the first identity we write~$f = \Delta_k g$ locally and use both identities in~\eqref{eq:flipping_commutators} as well as~\eqref{eq:laplace_commutators} to obtain
\begin{align*}
  \Delta_k \rmF_k\, f
&=
  - \rmR_{k-2} \rmL_k \rmF_k \Delta_k\, g
=
  \rmR_{k-2}
  (k-2)(k-1)\,
  \rmF_{k-2} \rmL_k\, g
\\
&=
  \mfrac{-(k-2)(k-1)}{(k-2)(k-1)}
  \rmF_k \rmR_{k-2} (\Delta_{k-2} + k - 2)\, \rmL_k g
\\
&=
  -
  \rmF_k (\Delta_k - k + 2 + k - 2)\, \rmR_{k-2} \rmL_k\, g
=
  \rmF_k \Delta_k^2\, g
=
  \rmF_k \Delta_k\, f
\tx{.}
\end{align*}
For the second identity we use that for any real-analytic function $g$ we have the identity
\begin{gather*}
  \rmR_k \big( y^{-k}\, \ov{\rmR_{-k-2}g}\big)
=
  y^{-k-2}\,  \ov{(\Delta_{-k-2} +k+2)g}
\tx{.}
\end{gather*}
A repeated application of this identity yields the result.
\end{proof}

\begin{remark}
To maintain the algebraic perspective of Section~\ref{sec:polyharmonic_to_quiver} on polyharmonic Maa\ss{} forms, we can lift the flipping operator to the automorphic forms as remarked on page~1738 of~\cite{BK}:
\begin{gather*}
  \phi_{\rmF_k\, f}
=
  \frac{1}{(-k)!} \ov{ X^{-k} \phi_f }
\tx{.}
\end{gather*}
Then Propositions~\ref{prop:flipping_commutators} and~\ref{prop:flipping_laplace_and_involution} can be verified by a Lie algebra computation. This drastically reduces the calculation required for~\eqref{eq:flipping_commutators} and the first relation in~\eqref{eq:flipping_laplace_and_involution}.
\end{remark}

A direct calculation shows that for smooth functions~$f$ we have 
\begin{gather}
\label{eq:flipping_maass_low}
  \rmL_k\, \rmF_k\, f
=
  \mfrac{y^{2-k}}{(-k)!}\, \ov{\rmR^{1-k}_k\,f}
\tx{.}
\end{gather}

As a consequence of Proposition~\ref{prop:flipping_laplace_and_involution} if~$\Delta_k\, f = 0$, we have~$\rmF_k \rmF_k\, f = f$ and, compare for example with Proposition 5.14 of \cite{bringmann-folsom-ono-rolen-2018},
\begin{gather}
\label{eq:flipping_maass_bol}
  \rmR_k^{1-k}\, \rmF_k\, f
=
  (-k)! y^{k-2}\, \ov{ \rmL_k\, f}
\tx{.}
\end{gather}

\subsection{Eisenstein and Poincar\'e series}

In this section we revisit some Eisenstein and Poincar\'e series that we will later use as an input for spectral families when constructing examples of polyharmonic Maa\ss{} forms. Let~$\Gamma_\infty \subset \SL{2}(\ZZ)$ be the subgroup of upper triangular matrices.

We first define the Eisenstein series of weight~$k \in \ZZ$. Let~$s\in\CC$ with~$\Re(s)>1-\frac{k}{2}$. Then we set
\begin{gather}\label{eq:def:eisensteink}
  E_k(\tau,s)
=
  \sum_{\gamma\in \Gamma_\infty \backslash \SL{2}(\ZZ)} y^s\big|_k\, \gamma
\tx{.}
\end{gather}
Via analytic continuation~$E_k(\tau,s)$ extends to all~$s \in \CC$ except for possible simple poles. We set~$E_k(\tau) = E_k(\tau,0)$. The Maa\ss{} lowering and raising operators act as
\begin{gather}%
\label{eq:eis:maass_operators}
\begin{aligned}
  \rmL_k\, E_k(\tau, s)
&=
  s\,
  E_{k-2}(\tau,s+1)
\tx{,}
\\
  \rmR_k\, E_k(\tau, s)
&=
  (s+k)\,
  E_{k+2}(\tau,s-1)
\tx{.}
\end{aligned}
\end{gather}
In particular, we have~$\Delta_k\, E_k(\tau,s) = s(1-k-s) E_k(\tau,s)$. Moreover, we have
\begin{gather}
  y^k\, \ov{E_k(\,\cdot\,, s)} 
=
  E_{-k}(\,\cdot\,, \ov{s}+k)
\tx{.}
\end{gather}

Now we recall some facts on Poincar\'{e} series with exponential growth at the cusps following~\cite{bruinier-2002a}. We let~$M_{\nu,\mu}(z)$ and~$W_{\nu,\mu}(z)$ denote the usual Whittaker functions (see p.~190 of~\cite{pocket}). For integers~$k$ and~$m \ne 0$, $\tau=x+iy\in\HS$, and $s\in \CC$ with $\Re(s)>1$, we define
\begin{multline}%
\label{eq:def:poincare}
  F_{k,m}(z,s) 
=
\\
  \mfrac{1}{2\Gamma(2s)}
  \sum_{\gamma \in \Gamma_\infty \backslash \SL{2}(\ZZ)} \Big(
  \big(-\sgn(m)\big)^{1-k}
  \big(4\pi |m| y\big)^{-\frac{k}{2}}
  M_{\sgn(m)\frac{k}{2}, s-\frac12}\big(4\pi |m| y\big)\,
  e(mx)
  \Big) \Big|_k\, \gamma
\tx{.}
\end{multline}
This Poincar\'{e} series converges for~$\Re(s)>1$ and is an eigenfunction of $\Delta_k$ with eigenvalue $s(1-s)+(k^2-2k)\slash 4$. 

We recall its behavior under the Maa\ss{} raising and lowering operators.

\begin{proposition}
We have
\begin{gather}
\begin{aligned}
 \rmR_k\, F_{k,m}(\tau,s) 
 & = 
 4\pi |m|\, \big( s+ \tfrac{k}{2}\big)\,  F_{k+2,m}(\tau,s) 
 \tx{,}
 \\
 \rmL_k  \, F_{k,m}(\tau,s) 
 & = 
 \mfrac{1}{4\pi |m|}\, \big( s-\tfrac{k}{2}\big)\,  F_{k-2,m}(\tau,s) 
  \tx{.}
\end{aligned}
\end{gather}
\end{proposition}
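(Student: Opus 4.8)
The plan is to pull the Maa\ss{} operators inside the defining series~\eqref{eq:def:poincare} and reduce to a one-variable identity for Whittaker functions. First I would use the standard intertwining relations $\rmR_k(f|_k\ga) = (\rmR_k f)\big|_{k+2}\ga$ and $\rmL_k(f|_k\ga) = (\rmL_k f)\big|_{k-2}\ga$ for $\ga \in \SL{2}(\RR)$. Since for $\Re(s) > 1$ the series~\eqref{eq:def:poincare} and its first-order derivatives converge absolutely and locally uniformly, and the normalizing factor $1/(2\Gamma(2s))$ does not depend on $k$, both claimed identities follow once we prove them for the seed function, which I denote
\[
  \phi_{k,m}(\tau,s)
=
  \big(-\sgn(m)\big)^{1-k}\,\big(4\pi|m|y\big)^{-\frac{k}{2}}\,
  M_{\sgn(m)\frac{k}{2},\, s-\frac12}\big(4\pi|m|y\big)\,e(mx)
\tx{,}
\]
namely $\rmR_k\,\phi_{k,m}(\tau,s) = 4\pi|m|\big(s+\tfrac{k}{2}\big)\,\phi_{k+2,m}(\tau,s)$ and $\rmL_k\,\phi_{k,m}(\tau,s) = \tfrac{1}{4\pi|m|}\big(s-\tfrac{k}{2}\big)\,\phi_{k-2,m}(\tau,s)$.

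Next I would make the action of $\rmR_k$ and $\rmL_k$ on a product $g(y)e(mx)$ explicit: from $\partial_\tau = \tfrac12(\partial_x - i\partial_y)$ and $\partial_{\ov\tau} = \tfrac12(\partial_x + i\partial_y)$ one gets
\[
  \rmR_k\big(g(y)e(mx)\big)
=
  \big(g'(y) - 2\pi m\, g(y) + k\,y^{-1}g(y)\big)\,e(mx)
\tx{,}\qquad
  \rmL_k\big(g(y)e(mx)\big)
=
  y^2\big(g'(y) + 2\pi m\, g(y)\big)\,e(mx)
\tx{.}
\]
I then substitute $g(y) = (4\pi|m|y)^{-k/2}M_{\kappa,\mu}(4\pi|m|y)$ with $\kappa = \sgn(m)\tfrac{k}{2}$, $\mu = s-\tfrac12$, and pass to $w = 4\pi|m|y$, using $2\pi m = \tfrac{\sgn(m)}{2}\cdot 4\pi|m|$, $y^{-1} = 4\pi|m|/w$ and $k/2 = \sgn(m)\kappa$. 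Collecting the term produced by differentiating $w^{-k/2}$, the right-hand sides become $c\cdot 4\pi|m|\,w^{-k/2}\big(\tfrac{d}{dw} + \sgn(m)(\tfrac{\kappa}{w} - \tfrac12)\big)M_{\kappa,\mu}(w)\,e(mx)$ and $c\cdot\tfrac{w^2}{4\pi|m|}\,w^{-k/2}\big(\tfrac{d}{dw} - \sgn(m)(\tfrac{\kappa}{w} - \tfrac12)\big)M_{\kappa,\mu}(w)\,e(mx)$ respectively, where $c = (-\sgn(m))^{1-k}$. Finally I would invoke the Whittaker contiguity relations that shift the first index (cf.\@ DLMF~13.15.11 and~13.15.13),
\[
  \Big(\tfrac{d}{dw} + \tfrac{\kappa}{w} - \tfrac12\Big)M_{\kappa,\mu}(w)
=
  \tfrac1w\big(\kappa+\mu+\tfrac12\big)M_{\kappa+1,\mu}(w)
\tx{,}\qquad
  \Big(\tfrac{d}{dw} - \tfrac{\kappa}{w} + \tfrac12\Big)M_{\kappa,\mu}(w)
=
  \tfrac1w\big(\mu-\kappa+\tfrac12\big)M_{\kappa-1,\mu}(w)
\tx{.}
\]
Because $\sgn(m)\tfrac{k\pm2}{2} = \sgn(m)\tfrac{k}{2} \pm \sgn(m)$, in each of the four cases (choice of operator times sign of $m$) exactly one of these applies and produces $w^{-(k\pm2)/2}M_{\sgn(m)(k\pm2)/2,\,s-1/2}(w)$ with accompanying scalar $\kappa+\mu+\tfrac12 = s+\tfrac{k}{2}$ for $\rmR_k$ and $\mu-\kappa+\tfrac12 = s-\tfrac{k}{2}$ for $\rmL_k$, independently of the sign of $m$; since $(-\sgn(m))^{1-(k\pm2)} = (-\sgn(m))^{1-k} = c$, the output is exactly $\phi_{k\pm2,m}(\tau,s)$ times the asserted constant.

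The step I expect to be most delicate is purely sign bookkeeping: $\sgn(m)$ enters simultaneously the Whittaker index $\kappa = \sgn(m)k/2$, the coefficient $2\pi m$, and the prefactor $(-\sgn(m))^{1-k}$, and one must pick the correctly signed contiguity relation in each of the four cases so that all spurious $\sgn(m)$-dependence cancels and the clean formulas survive. As a less computational alternative I would mention, but not use as the main argument: $\rmR_k F_{k,m}(\tau,s)$ is automatically a weight-$(k+2)$ form for $\SL{2}(\ZZ)$, a $\Delta_{k+2}$-eigenfunction with eigenvalue $s(1-s) + \big((k+2)^2-2(k+2)\big)\slash 4$, whose only exponentially growing Fourier mode at the cusp is the $m$-th one; the space of such functions is one-dimensional (cf.\@ \cite{bruinier-2002a}), so $\rmR_k F_{k,m}(\tau,s)$ is a scalar multiple of $F_{k+2,m}(\tau,s)$, and the scalar is read off from the $y\to 0$ asymptotics via $M_{\kappa,\mu}(w)\sim w^{\mu+1/2}$, with the argument for $\rmL_k$ analogous.
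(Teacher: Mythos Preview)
Your proposal is correct and follows essentially the same route as the paper: reduce to the seed function via the intertwining of $\rmR_k,\rmL_k$ with the slash action, then apply Whittaker contiguity relations shifting the first parameter. The paper cites the equivalent identities from Abramowitz--Stegun (13.4.10, 13.4.11 together with 13.1.32) rather than DLMF, but your computation is simply a more explicit unpacking of the same argument, including the sign bookkeeping in~$m$ that the paper leaves implicit.
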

\begin{proof}
Since $\rmL_k$ and $\rmR_k$ commute with the slash operator, it suffices to show the identity on the corresponding Whittaker functions. We use equations~(13.4.10),~(13.4.11) and~(13.1.32) in \cite{pocket} which imply the desired identity. Note that parts of these identities were already proven in~\cite{bruinier-ono-partition} and~\cite{alfes2014}. 
\end{proof}

\begin{proposition}
Complex conjugation yields
\begin{gather}
\label{eq:pcseries:conj}
  y^k\, \ov{F_{k,m}(\tau,s)}
=
  (-1)^{1-k}
  (4 \pi |m|)^{-k}\,
  F_{-k,-m}(\tau,\ov{s})
\tx{.}
\end{gather}
\end{proposition}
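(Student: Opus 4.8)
The plan is to apply the operation $f \mapsto y^k\,\ov{f}$ directly to the defining series~\eqref{eq:def:poincare} and to exploit the covariance~\eqref{eq:mirror_covariance}, according to which this operation turns a weight-$k$ slash into a weight-$(-k)$ slash. Since the series~\eqref{eq:def:poincare} converges absolutely and locally uniformly for~$\Re(s) > 1$, one may conjugate it term by term. Writing
\begin{gather*}
  g_{k,m}(\tau, s)
=
  \mfrac{1}{2\Gamma(2s)}\,
  \big(-\sgn(m)\big)^{1-k}\,
  \big(4\pi|m|y\big)^{-\frac{k}{2}}\,
  M_{\sgn(m)\frac{k}{2},\, s - \frac12}\big(4\pi|m|y\big)\,
  e(mx)
\end{gather*}
for the seed function, so that~$F_{k,m}(\tau,s) = \sum_{\gamma \in \Gamma_\infty \backslash \SL{2}(\ZZ)} g_{k,m}\big|_k\, \gamma$, identity~\eqref{eq:mirror_covariance} yields
\begin{gather*}
  y^k\, \ov{F_{k,m}(\tau, s)}
=
  \sum_{\gamma \in \Gamma_\infty \backslash \SL{2}(\ZZ)}
  \big( y^k\, \ov{g_{k,m}(\,\cdot\,, s)} \big)\big|_{-k}\, \gamma
\tx{.}
\end{gather*}
Hence it suffices to show that~$y^k\, \ov{g_{k,m}(\tau, s)}$ equals~$(-1)^{1-k}(4\pi|m|)^{-k}$ times~$g_{-k,-m}(\tau, \ov{s})$; substituting this back into the displayed sum then proves the identity for~$\Re(s) > 1$, and the general case follows by analytic continuation wherever both sides are defined, both being meromorphic in~$\ov{s}$.

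The second step is the pointwise verification of this seed identity. Here~$\ov{\Gamma(2s)} = \Gamma(2\ov{s})$, the prefactors~$(-\sgn(m))^{1-k}$ and~$(4\pi|m|y)^{-k/2}$ are real, and~$\ov{e(mx)} = e(-mx)$ since~$x \in \RR$. The only nontrivial ingredient is the conjugation rule for the Whittaker function on the positive real axis, namely~$\ov{M_{\nu,\mu}(x)} = M_{\ov{\nu},\ov{\mu}}(x)$ for~$x > 0$, which is immediate from its confluent-hypergeometric series expansion (p.~190 of~\cite{pocket}). Multiplying through by~$y^k$, using~$\sgn(-m)\tfrac{-k}{2} = \sgn(m)\tfrac{k}{2}$ so that the Whittaker indices align with those of~$g_{-k,-m}(\,\cdot\,, \ov{s})$, and collecting the remaining elementary factors, one is left with the scalar
\begin{gather*}
  \big(-\sgn(m)\big)^{1-k}\, \big(\sgn(m)\big)^{-1-k}\, (4\pi|m|)^{-k}
=
  (-1)^{1-k}\, \big(\sgn(m)\big)^{-2k}\, (4\pi|m|)^{-k}
=
  (-1)^{1-k}\, (4\pi|m|)^{-k}
\tx{,}
\end{gather*}
where the last equality uses~$k \in \ZZ$, so that~$\sgn(m)^{-2k} = 1$.

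I expect the only real obstacle to be careful bookkeeping: tracking the sign factor~$(-\sgn(m))^{1-k}$, the half-integral powers~$(4\pi|m|y)^{\mp k/2}$, and the effect of~$m \mapsto -m$ on the Whittaker index, together with correctly stating the (standard, but easy to mis-state) conjugation rule for~$M_{\nu,\mu}$. No analytic subtlety arises beyond the absolute convergence for~$\Re(s) > 1$ already recorded after~\eqref{eq:def:poincare}, which is what licenses the term-by-term conjugation.
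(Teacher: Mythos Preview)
Your proof is correct and follows essentially the same approach as the paper's own argument. The only organizational difference is that you invoke the covariance identity~\eqref{eq:mirror_covariance} up front to reduce the claim to the seed function, whereas the paper expands the slash action termwise and uses $\Im(\gamma\tau) = y(c\tau+d)^{-1}(c\ov\tau+d)^{-1}$ directly; the bookkeeping on the Whittaker indices and sign factors is identical in both.
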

\begin{proof}
We have 
\begin{align*}
&
 y^k \Big(\ov{
  \mfrac{1}{2\Gamma(2s)}
  \big(-\sgn(m)\big)^{1-k}
  \big(4\pi |m| y\big)^{-\frac{k}{2}}
  M_{\sgn(m)\frac{k}{2}, s-\frac12}\big(4\pi |m| y\big)\,
  e(mx)
  \Big)\big|_{k}\gamma
  }
  \\
={}& 
  y^k
  \mfrac{1}{2\Gamma(2\ov{s})}
  \big(-\sgn(m)\big)^{1-k}
  \big(4\pi |m| \Im(\gamma \tau)\big)^{-\frac{k}{2}}
  \\
  & \quad \quad \times 
  M_{\sgn(m)\frac{k}{2}, \ov{s}-\frac12}\big(4\pi |m| \Im(\gamma\tau)\big)\,
  e(-m\Re(\gamma\tau)) (c\ov{\tau}+d)^{-k}.
\end{align*}
Note that $\Im(\gamma\tau)=y (c\tau+d)^{-1} (c\ov{\tau}+d)^{-1}$. A short calculation then yields the desired result.
\end{proof}

We also recall the existence of spectral families of Poincar\'{e} series (compare \cite{bruinier-funke-imamoglu-2015} and \cite{strathausen}).
\begin{proposition}\label{prop:poincarespectral}
 Let $f$ be a harmonic weak Maa\ss{} form of weight $k\leq 0$. There exists an open neighborhood $U$ in $\CC$ of\/~$1-k/2$ and a holomorphic family of functions $(f_s)_{s\in U}$ on $\HS$, where $f_s$ is a weak Maa\ss{} form of weight $k$ of eigenvalue $s(1-s)+(k^2-2k) \slash 4$, and $f_{1-k/2}=f$.
\end{proposition}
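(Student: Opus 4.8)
The plan is to realize $f$ as an explicit finite $\CC$-linear combination of spectral families of Eisenstein and Poincaré series attached to the cusps of the Fuchsian group for which $f$ is modular, and then to let the spectral parameter vary. Observe first that, by the Proposition above on the action of $\rmL_k$ and $\rmR_k$, the series $F_{k,m}(\tau,s)$ is an eigenfunction of $\Delta_k$ with eigenvalue $s(1-s)+(k^2-2k)/4$, which vanishes exactly at $s=1-k/2$; moreover for $k<0$ one has $\Re(1-k/2)>1$, so the defining series \eqref{eq:def:poincare} converges at $s=1-k/2$. Likewise $\Delta_k E_k(\tau,s)=s(1-k-s)E_k(\tau,s)$, and the eigenvalue-preserving reparametrization $s\mapsto s-k/2$ turns $E_k(\tau,\,\cdot\,)$ into a family with eigenvalue $s(1-s)+(k^2-2k)/4$ taking the value $E_k(\tau,1-k)$ at $s=1-k/2$, which by \eqref{eq:def:eisensteink} lies in the half-plane of convergence when $k<0$. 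Thus, in the normalization of the Proposition, both $F_{k,m}(\tau,\,\cdot\,)$ and $E_k(\tau,\,\cdot-k/2)$ are holomorphic spectral families of weight-$k$ weak Maaß forms passing through harmonic forms at $s=1-k/2$.

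Assume $k<0$. By the structure theory of harmonic weak Maaß forms \cite{bruinierfunke04}, $\xi_k f:=y^{k-2}\,\overline{\rmL_k f}$ is a weakly holomorphic modular form of weight $2-k$, and two harmonic weak Maaß forms with the same $\xi_k$-image and the same principal parts at all cusps coincide. Decompose $\xi_k f$ into holomorphic Poincaré series $F_{2-k,n}(\tau,1-k/2)$ ($n>0$, which span the cusp forms of weight $2-k$), holomorphic Eisenstein series $E_{2-k}(\tau,0)$ at the cusps, and Poincaré series carrying its finite principal parts. With the lowering formula for $F_{k,m}$ above, the relations \eqref{eq:eis:maass_operators} and \eqref{eq:pcseries:conj}, and the commutation relations \eqref{eq:laplace_commutators}, each constituent is exhibited as the $\xi_k$-image of the value at $s=1-k/2$ of one of the holomorphic families $F_{k,m}(\tau,\,\cdot\,)$, $E_k(\tau,\,\cdot-k/2)$, or — for the cusp-form constituents — $\bigl(s-(1-k/2)\bigr)^{-1}\rmL_k^{1-k}F_{2-k,n}(\tau,\,\cdot\,)$, the normalization being forced because $\rmL_k^{1-k}F_{2-k,n}(\tau,s)$ develops a simple zero at the harmonic point $s=1-k/2$. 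Choosing a finite $\CC$-linear combination of the corresponding values at $s=1-k/2$ that has the same $\xi_k$-image and the same principal parts as $f$, the difference is a harmonic weak Maaß form of weight $k<0$ with vanishing $\xi_k$-image and no principal parts, hence zero. This writes $f=\sum_j c_j\,B_j(1-k/2)$ with each $B_j$ one of the holomorphic families above attached to some cusp.

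Now set $f_s:=\sum_j c_j\,B_j(s)$ and let $U:=\bigcap_j U_j$ be the intersection of the finitely many open neighborhoods of $1-k/2$ on which the $B_j$ are defined and holomorphic; $U$ is an open neighborhood of $1-k/2$. Each $B_j(s)$ is a weak Maaß form of weight $k$ with $\Delta_k$-eigenvalue $s(1-s)+(k^2-2k)/4$, hence so is $f_s$, and $f_{1-k/2}=\sum_j c_j B_j(1-k/2)=f$. This proves the statement for $k<0$; for $k=0$ the same scheme applies with two adjustments furnished by the cited works — the point $s=1-k/2=1$ is the classical pole of the real-analytic Eisenstein series, so the Eisenstein constituents must be taken as differences $E_0^{(\mathfrak a)}(\tau,\,\cdot\,)-E_0^{(\mathfrak b)}(\tau,\,\cdot\,)$ whose poles cancel (the Kronecker limit formula phenomenon), and $F_{0,m}(\tau,s)$ must be meromorphically continued across $\Re(s)=1$.

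The step I expect to be the main obstacle is the decomposition of the second paragraph: showing that an arbitrary harmonic weak Maaß form of weight $k\le 0$ lies in the span of these named families requires the full exact sequence of \cite{bruinierfunke04}, the classical spanning properties of holomorphic Poincaré and Eisenstein series, the construction of spectral families for the $\xi_k$-preimages of cusp forms by differentiating Poincaré series, and the linear-algebra argument that all of this can be matched simultaneously with the principal parts of $f$; the meromorphic continuations and pole cancellations needed in the case $k=0$ are equally delicate. These are precisely the ingredients assembled in \cite{bruinier-funke-imamoglu-2015,strathausen}, whose results I would invoke for the technical heart of the proof.
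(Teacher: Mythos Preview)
The paper does not give its own proof of this proposition: it merely states the result and cites \cite{bruinier-funke-imamoglu-2015} and \cite{strathausen} for it. So there is no in-paper argument to compare your proposal against; what you have written is essentially a sketch of the argument contained in those references, and you yourself acknowledge that the technical core must be drawn from them.

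Your outline is broadly along the right lines --- realize~$f$ as a finite linear combination of values at the harmonic point of Eisenstein and Poincar\'e spectral families, then let~$s$ vary --- but a few of the details you state are not quite in shape. The uniqueness claim ``two harmonic weak Maa\ss{} forms with the same $\xi_k$-image and the same principal parts coincide'' is correct for~$k<0$ but fails for~$k=0$ (constants), so your $k=0$ treatment needs an extra ingredient beyond pole cancellation. More seriously, your handling of the cusp-form constituents via the family $(s-(1-k/2))^{-1}\rmL_k^{1-k}F_{2-k,n}(\tau,\,\cdot\,)$ is not the standard mechanism: the usual way to produce $\xi_k$-preimages of cusp forms inside spectral families is to use the negative-index Poincar\'e series $F_{k,m}$ with~$m<0$ directly (these are harmonic at $s=1-k/2$ and their $\xi_k$-images span the cusp forms), rather than lowering a weight-$(2-k)$ family and dividing out a zero. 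As written, your family need not be modular of weight~$k$, and the zero you divide by need not be simple in general. Since you ultimately defer to \cite{bruinier-funke-imamoglu-2015,strathausen} anyway, this does not undermine the proposal, but if you want a self-contained sketch you should replace that step with the negative-index Poincar\'e construction.
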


\subsection{Derivatives of spectral families: constant weight}
\label{ssec:spectral_families_constant_weight}

Differentials of spectral families are one tool to provide higher depth Maa\ss{} forms. We briefly revisit the case of fixed weight for comparison with the later approach that allows us to change the weight. Prototypical~$f_s$ that fit into the following setup are provided by Eisenstein series and Poincar\'e series.

\begin{lemma}
\label{la:spectral_derivative_laplace_classical}
Let~$k$ be an integer, and~$U \subseteq \CC$ be an open neighborhood of~$0$. Given functions~$f_s :\, \HS \ra V$ for a complex vector space~$V$ that are smooth in~$s$ and~$\tau$, we assume that for all~$\tau \in \HS$ and~$s \in U$ we have
\begin{gather*}
  \Delta_k\,f_s
=
  s (1-k-s) f_s
\tx{.}
\end{gather*}
Then with
\begin{gather*}
  f^{(d)}
\;:=\;
  \big( \partial_s^d\, f_s \big)_{s=0}
\end{gather*}
we have	
\begin{gather}
\label{eq:spectral_derivative_laplace_classical}
  \Delta_k\, f^{(d)}
=
  d
  (1-k)\,
  f^{(d-1)}
\,-\,
  d (d-1)\,
  f^{(d-2)}
\tx{.}
\end{gather}
\end{lemma}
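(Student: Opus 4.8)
The plan is to differentiate the eigenvalue equation $\Delta_k\, f_s = s(1-k-s) f_s$ exactly $d$ times with respect to the spectral parameter $s$ and then specialize at $s = 0$. The one point that needs a word of justification is that $\Delta_k$ is a differential operator in the variable $\tau$ whose coefficients do not involve $s$; since $f_s$ is smooth jointly in $s$ and $\tau$, the mixed partial derivatives commute, so $\partial_s^d\big(\Delta_k\, f_s\big) = \Delta_k\big(\partial_s^d\, f_s\big)$, and evaluating at $s = 0$ gives $\partial_s^d\big(\Delta_k\, f_s\big)\big|_{s=0} = \Delta_k\, f^{(d)}$.

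For the right-hand side I would first write $s(1-k-s) = (1-k)s - s^2$ and apply the Leibniz rule:
\begin{gather*}
  \partial_s^d\big( ((1-k)s - s^2)\, f_s \big)
=
  \sum_{j=0}^d \binom{d}{j}\, \partial_s^j\big( (1-k)s - s^2 \big)\, \partial_s^{d-j} f_s
\tx{.}
\end{gather*}
The polynomial $(1-k)s - s^2$ has $j$-th derivative equal to $(1-k) - 2s$ for $j = 1$, equal to $-2$ for $j = 2$, and equal to $0$ for $j \ge 3$; moreover its value at $s = 0$ vanishes. Hence, upon specializing at $s = 0$, only the terms $j = 1$ and $j = 2$ survive, and they contribute $\binom{d}{1}(1-k)\, f^{(d-1)} + \binom{d}{2}(-2)\, f^{(d-2)} = d(1-k)\, f^{(d-1)} - d(d-1)\, f^{(d-2)}$. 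This is precisely~\eqref{eq:spectral_derivative_laplace_classical}.

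There is essentially no obstacle beyond this bookkeeping: the only substantive step is the interchange of $\partial_s$ with the $\tau$-derivatives comprising $\Delta_k$, which is immediate from the joint smoothness hypothesis. The formula is also consistent in the low-depth cases, since $\binom{d}{1} = 0$ for $d = 0$ and $\binom{d}{2} = 0$ for $d \le 1$, so the terms involving $f^{(d-1)}$ or $f^{(d-2)}$ automatically drop out when they would be meaningless.
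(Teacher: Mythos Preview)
Your proof is correct and follows essentially the same approach as the paper: interchange $\partial_s$ with $\Delta_k$ using joint smoothness, apply the Leibniz rule to $s(1-k-s)f_s$, and specialize at $s=0$. The paper's version is slightly terser but the argument is identical.
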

\begin{proof}
Since~$f_s(\tau)$ is smooth in~$s$ and~$\tau$, then~$\partial_s$ and~$\Delta_k$, which is a differential operator with respect to~$\tau$, intertwine. For simplicity, we set~$\partial_s^d f_s = 0$ if~$d < 0$. By the product rule, we have
\begin{align*}
  \Delta_k\, \partial_s^d f_s
&{}=
  \partial_s^d\, \Delta_k f_s
=
  \partial_s^d\, s (1-k-s) f_s
\\
&{}=
  s (1-k-s)\,
  \partial_s^d f_s
\,+\,
  d
  (1-k-2s)\,
  \partial_s^{d-1} f_s
\,-\,
  d (d-1)
  \,
  \partial_s^{d-2} f_s
\tx{.}
\end{align*}
Inserting~$s = 0$ yields the statement.	
\end{proof}

We can employ this lemma to produce polyharmonic Maa\ss{} forms from spectral families. More specifically, we obtain a preimage of~$f^{(0)}$ under~$\Delta_k^d$. Note that the case~$k = 1$ requires special treatment.

\begin{corollary}
\label{cor:spectral_derivative_laplace_classical}
Let~$k$ and~$f^{(d)}$ be as in Lemma~\ref{la:spectral_derivative_laplace_classical}. Then
\begin{gather*}
  \Delta_k^d\,
  \mfrac{1}{d! (1-k)^d}
  f^{(d)}
{}=
  f^{(0)}
\tx{,}
\quad
  \tx{if\ }k \ne 1
\tx{;}
\qquad
  \Delta_k^d\,
  \mfrac{(-1)^d}{(2d)!}
  f^{(2d)}
{}=
  f^{(0)}
\tx{,}
\quad
  \tx{if\ }k = 1
\tx{.}
\end{gather*}
\end{corollary}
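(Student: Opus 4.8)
The plan is a short induction on $d$ based solely on the recursion~\eqref{eq:spectral_derivative_laplace_classical} of Lemma~\ref{la:spectral_derivative_laplace_classical}, which reads $\Delta_k\, f^{(m)} = m(1-k)\, f^{(m-1)} - m(m-1)\, f^{(m-2)}$ once we adopt the convention $f^{(m)} = 0$ for $m < 0$.

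The first ingredient is an elementary \emph{degree-drop} observation: for all $j \ge 0$ and all $m$, the function $\Delta_k^j\, f^{(m)}$ is a linear combination of $f^{(m-j)}, f^{(m-j-1)}, \dots, f^{(m-2j)}$. This follows by induction on $j$, since applying $\Delta_k$ to any $f^{(i)}$ produces only $f^{(i-1)}$ and $f^{(i-2)}$, so a single application lowers the top index by $1$ and the bottom index by $2$. In particular $\Delta_k^{d-1}\, f^{(d-2)}$ involves only $f^{(i)}$ with $i \le -1$ and therefore vanishes; this is the point that forces us to establish an identity for $\Delta_k^d\, f^{(d)}$ as a whole rather than iterating the two-term recursion naively.

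For $k \ne 1$ I would prove $\Delta_k^d\, f^{(d)} = d!\,(1-k)^d\, f^{(0)}$ by induction on $d$, the case $d = 0$ being trivial. For the inductive step, write $\Delta_k^d\, f^{(d)} = \Delta_k^{d-1}\big(\Delta_k\, f^{(d)}\big) = d(1-k)\, \Delta_k^{d-1}\, f^{(d-1)} - d(d-1)\, \Delta_k^{d-1}\, f^{(d-2)}$; the last term vanishes by the degree-drop observation, and the induction hypothesis rewrites the first as $d(1-k)\cdot(d-1)!\,(1-k)^{d-1}\, f^{(0)} = d!\,(1-k)^d\, f^{(0)}$. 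Dividing through by the nonzero scalar $d!\,(1-k)^d$ (here $k \ne 1$ is used) gives the asserted identity.

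For $k = 1$ the recursion collapses to $\Delta_1\, f^{(m)} = -m(m-1)\, f^{(m-2)}$, and a one-line induction on $j$ gives $\Delta_1^j\, f^{(m)} = (-1)^j\, m(m-1)\cdots(m-2j+1)\, f^{(m-2j)}$. Specializing to $m = 2d$ and $j = d$, the falling product runs over the $2d$ consecutive integers from $2d$ down to $1$ and hence equals $(2d)!$, so $\Delta_1^d\, f^{(2d)} = (-1)^d (2d)!\, f^{(0)}$; multiplying by $(-1)^d/(2d)!$ completes the proof. No step here is a genuine obstacle — the only care required is the index bookkeeping in the degree-drop observation.
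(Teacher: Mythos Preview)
Your proof is correct and supplies exactly the induction argument the paper leaves implicit (the paper states the corollary without proof, as an immediate consequence of Lemma~\ref{la:spectral_derivative_laplace_classical}). The degree-drop observation is the natural device for handling the two-term recursion, and your treatment of both the $k\neq 1$ and $k=1$ cases is clean and complete.
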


\subsection{Altering weights of harmonic Maa\ss{} forms}

The construction of polyharmonic Maa\ss{} forms in Section~\ref{ssec:spectral_families_constant_weight} preserves the weight~$k$ of~$f_s$. We next derive a formalism that allows us to produce polyharmonic Maa\ss{} forms of weight~$k \pm m$ for~$m \in \ZZ_{\ge 0}$ by extending the approach of Section~\ref{ssec:spectral_families_constant_weight}.

\begin{lemma}
\label{la:laplace_BKbasis_eigenform}
Let $k$ and~$m \ge 0$ be integers, and consider a smooth function~$f :\, \HS \ra V$ for a complex vector space~$V$. We assume that~$\Delta_k\,f = \alpha f$ for some~$\alpha \in \CC$.

Writing for integers~$0 \le r \le m$
\begin{gather*}
  f_{\rmL,r}
\;:=\;
  \frake_{r,m-r}\, \rmL^{m-r}_k f
\quad\tx{and}\quad
  f_{\rmR,r}
\;:=\;
  \frake_{r,m-r}\, \rmR^r_k f
\end{gather*}
and~$f_{\rmL,r} = f_{\rmR,r} = 0$ for~$r < 0$ and~$r > m$, we have
\begin{align*}
  \Delta_{k-m}\, f_{\rmL,r}
&{}\;=\;
  \big( \alpha + (m-r)(m-2r-k) \big)\,
  f_{\rmL,r}
\\&\qquad
-\,
  f_{\rmL,r-1}
\,+\,
  (r+1)(m-r) \big( \alpha + (m-r-1)(m-r-k) \big)\,
  f_{\rmL,r+1}
\tx{,}
\\[.3\baselineskip]
  \Delta_{k+m}\, f_{\rmR,r}
&{}\;=\;
  \big( \alpha - r(m-2r-k) - m \big)\,
  f_{\rmR,r}
\\&\qquad
+\,
  \big( \alpha + r (r-1+k) \big)\,
  f_{\rmR,r-1}
\,-\,
  (r+1) (m-r)\,
  f_{\rmR,r+1}
\tx{.}
\end{align*}
\end{lemma}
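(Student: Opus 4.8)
The plan is to apply the product formula \eqref{eq:deltaonproducts} for the Laplace operator acting on a product of two functions, with the first factor being $\frake_{r,m-r}$ and the second being an iterated lowering or raising derivative of $f$. The key inputs are: (i) formula \eqref{eq:deltaonproducts}, which expresses $\Delta_{k_1+k_2}(gh)$ in terms of $\Delta_{k_1}g$, $\Delta_{k_2}h$, and cross-terms involving $\rmR_{k_1}g\cdot\rmL_{k_2}h$ and $\rmL_{k_1}g\cdot\rmR_{k_2}h$; (ii) the eigenvalue equation \eqref{eq:BKbasis_laplace} for $\frake_{r,m-r}$, and the action of the Maa\ss{} operators on it given by \eqref{eq:BKbasis_maass_operators}; and (iii) the iterated commutator relations \eqref{eq:laplace_commutators}, \eqref{eq:maass_iterated_commutators} that govern how $\Delta$ and the single operators $\rmL$, $\rmR$ interact with the powers $\rmL_k^{m-r}$ and $\rmR_k^r$ applied to the eigenfunction $f$.

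For the first identity I would set $g = \frake_{r,m-r}$, which has weight $m-2r$, and $h = \rmL_k^{m-r}f$, which has weight $k-2(m-r)$, so that the product has weight $(m-2r)+(k-2(m-r)) = k-m$, matching the subscript on the left-hand side. Then:
\begin{itemize}
\item The term $(\Delta_{m-2r}\frake_{r,m-r})\cdot h$ contributes $-(r+1)(m-r)f_{\rmL,r}$ by \eqref{eq:BKbasis_laplace}; wait — I should be careful: one computes $\Delta$ of $g$ at its own weight $m-2r$. Using \eqref{eq:BKbasis_laplace} with $r$ replaced appropriately gives the diagonal contribution from this piece.
\item The term $g\cdot(\Delta_{k-2(m-r)}h)$ requires $\Delta_{k-2(m-r)}\rmL_k^{m-r}f$; by the first line of \eqref{eq:laplace_commutators} with $r\rightsquigarrow m-r$ this equals $\rmL_k^{m-r}(\Delta_k - (m-r)(k-(m-r)-1))f = (\alpha - (m-r)(k-m+r-1))\rmL_k^{m-r}f$, contributing a multiple of $f_{\rmL,r}$.
\item The cross-term $-(\rmR_{m-2r}\frake_{r,m-r})\cdot(\rmL_{k-2(m-r)}\rmL_k^{m-r}f)$: by \eqref{eq:BKbasis_maass_operators}, $\rmR_{m-2r}\frake_{r,m-r} = \frake_{r-1,m-r+1}$, and $\rmL_{k-2(m-r)}\rmL_k^{m-r}f = \rmL_k^{m-r+1}f$, so this term is $-\frake_{r-1,m-r+1}\rmL_k^{m-(r-1)}f = -f_{\rmL,r-1}$.
\item The cross-term $-(\rmL_{m-2r}\frake_{r,m-r})\cdot(\rmR_{k-2(m-r)}\rmL_k^{m-r}f)$: by \eqref{eq:BKbasis_maass_operators}, $\rmL_{m-2r}\frake_{r,m-r} = (r+1)(m-r)\frake_{r+1,m-r-1}$, and by the first line of \eqref{eq:maass_iterated_commutators} with $r\rightsquigarrow m-r$, $\rmR_{k-2(m-r)}\rmL_k^{m-r}f = -\rmL_k^{m-r-1}(\Delta_k - (m-r-1)(k-(m-r)))f = -(\alpha-(m-r-1)(m-r-k))\rmL_k^{m-r-1}f$. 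Combining, this cross-term is $(r+1)(m-r)(\alpha+(m-r-1)(m-r-k))\frake_{r+1,m-r-1}\rmL_k^{m-(r+1)}f = (r+1)(m-r)(\alpha+(m-r-1)(m-r-k))f_{\rmL,r+1}$, which is the asserted $f_{\rmL,r+1}$-coefficient. The two diagonal contributions then have to be collected and shown to equal $\alpha+(m-r)(m-2r-k)$; this is the one genuinely bookkeeping-heavy simplification.
\end{itemize}

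The second identity is entirely parallel: set $g=\frake_{r,m-r}$ and $h=\rmR_k^r f$, note the weight of the product is $(m-2r)+(k+2r)=k+m$, and run the same four-term expansion, now using the second lines of \eqref{eq:laplace_commutators} and \eqref{eq:maass_iterated_commutators} together with $\rmL_{m-2r}\frake_{r,m-r}=(r+1)(m-r)\frake_{r+1,m-r-1}$, $\rmR_{m-2r}\frake_{r,m-r}=\frake_{r-1,m-r+1}$, $\rmR_{k+2r}\rmR_k^r f = \rmR_k^{r+1}f$, and $\rmL_{k+2r}\rmR_k^r f = -\rmR_k^{r-1}(\Delta_k+r(k+r-1))f = -(\alpha+r(k+r-1))\rmR_k^{r-1}f$. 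I expect the main obstacle to be purely organizational rather than conceptual: keeping the many shifted indices and the signs straight, and verifying that the boundary conventions $f_{\rmL,r}=f_{\rmR,r}=0$ for $r<0$ or $r>m$ are consistent with the vanishing $\rmL_{-m}\frake_{m,0}=0$ and $\rmR_m\frake_{0,m}=0$ noted after \eqref{eq:BKbasis_maass_operators}, so that the formulas remain valid at the extreme values $r=0$ and $r=m$ without special-casing. Since \eqref{eq:deltaonproducts} and all the commutator identities are already available, the proof is a direct, if somewhat lengthy, computation, and I would present it by exhibiting the four terms of \eqref{eq:deltaonproducts} one at a time and then collecting coefficients.
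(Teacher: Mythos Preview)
Your proposal is correct and follows essentially the same approach as the paper: both apply the product formula~\eqref{eq:deltaonproducts} with the factorization $f_{\rmL,r}=\frake_{r,m-r}\cdot\rmL_k^{m-r}f$ (respectively $f_{\rmR,r}=\frake_{r,m-r}\cdot\rmR_k^r f$), then evaluate the four resulting terms using~\eqref{eq:BKbasis_laplace}, \eqref{eq:BKbasis_maass_operators}, \eqref{eq:laplace_commutators}, and~\eqref{eq:maass_iterated_commutators}. Your write-up is in fact more detailed than the paper's, which merely records the intermediate eigenvalue identities; note only a harmless sign slip in your intermediate expression for $\rmR_{k-2(m-r)}\rmL_k^{m-r}f$ (it should read $-(\alpha+(m-r-1)(m-r-k))\rmL_k^{m-r-1}f$), which you correct in the next line anyway.
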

\begin{proof}
We will apply~\eqref{eq:deltaonproducts} and simplify the contributions arising from the first two terms. To this end, recall from~\eqref{eq:BKbasis_laplace} the Laplace eigenvalues~$-(r+1)(m-r)$ of~$\frake_{r,m-r}$. From~\eqref{eq:laplace_commutators}, we find that
\begin{align*}
  \Delta_{k+2m-2r}\, \rmL_k^{m-r} f
&=
  \big( \alpha + (m-r)(m-r-k+1) \big)\, \rmL_k^{m-r} f
\tx{,}
\\
  \Delta_{k+2r}\, \rmR_k^r f
&=
  \big( \alpha + r(r-1+k) \big)\, \rmR_k^r f
\tx{.}
\end{align*}
The images of~$\frake_{r,m-r}$ under the Maa\ss{} operators are given in~\eqref{eq:BKbasis_maass_operators}. Finally, we obtain from~\eqref{eq:maass_iterated_commutators} that
\begin{align*}
  -
  \rmR_{k-2m+2r}\, \rmL_k^{m-r} f
&=
  \big( \alpha - (m-r-1)(m-r-k) \big)\,
  \rmL_k^{m-r-1} f
\tx{,}
\\
  -
  \rmL_{k-2r}\, \rmR_k^r f
&=
  \big( \alpha - r (r-1+k) \big)\,
  \rmR_k^{r-1} f
\tx{.}
\end{align*}
\end{proof}

\begin{proposition}%
\label{prop:maass_form_change_weight_L}
Given a harmonic weak Maa\ss{} form~$f$ of weight~$k$ and a non-ne\-ga\-tive integer~$m \ge k$, set
\begin{gather*}
  f_\rmL
=
  \sum_{r=0}^{\min\{m,m-k\}}
  \mfrac{1}{(m-r)! (m-r-k)!}
  \frake_{r,m-r} \rmL^{m-r} f
\tx{.}
\end{gather*}
If~$m < k$, then set
\begin{gather*}
  f_\rmL
=
  \sum_{r=0}^m
  \mfrac{1}{(m-r)! (1-k)_{m-r}}
  \frake_{r,m-r} \rmL^{m-r} f
\tx{,}
\end{gather*}
where~$(a)_r = a (a + 1) \cdots (a + r - 1)$ is the Pochhammer symbol.

We have~$\Delta_{k-m}\, f_\rmL = 0$. Further, if~$f_\rmL \ne 0$ then any linear combination of the functions~$\frake_{r,m-r} \rmL^{m-r} f$ that vanishes under~$\Delta_{k-m}$ is a scalar multiple of~$f_\rmL$.
\end{proposition}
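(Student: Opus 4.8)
The plan is to reduce everything to Lemma~\ref{la:laplace_BKbasis_eigenform} with~$\alpha = 0$, which applies since a harmonic weak Maa\ss{} form satisfies~$\Delta_k\, f = 0$. Retaining the notation~$f_{\rmL,r} = \frake_{r,m-r}\, \rmL^{m-r}_k f$ of that lemma (and the convention~$f_{\rmL,r} = 0$ for~$r < 0$ or~$r > m$), every~$f_{\rmL,r}$ has weight~$(m-2r) + \big(k - 2(m-r)\big) = k - m$, so~$f_\rmL$ has weight~$k-m$ and~$\Delta_{k-m}$ acts on the linear span of the~$f_{\rmL,r}$. Setting~$\alpha = 0$ in the first identity of Lemma~\ref{la:laplace_BKbasis_eigenform} gives
\begin{gather*}
  \Delta_{k-m}\, f_{\rmL,r}
=
  (m-r)(m-2r-k)\, f_{\rmL,r}
-
  f_{\rmL,r-1}
+
  (r+1)(m-r)(m-r-1)(m-r-k)\, f_{\rmL,r+1}
\tx{.}
\end{gather*}

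First I would write~$f_\rmL = \sum_{r \ge 0} c_r\, f_{\rmL,r}$ with~$c_r$ the coefficient occurring in the statement, extended by the convention~$1 \slash n! = 0$ for~$n < 0$, so that~$c_r = 0$ for~$r > \min\{m,m-k\}$ in the first case and for~$r > m$ in the second. Collecting the coefficient of~$f_{\rmL,j}$ in~$\Delta_{k-m}\, f_\rmL$ and equating it to zero produces the three-term recurrence
\begin{gather*}
  c_{j+1}
=
  (m-j)(m-2j-k)\, c_j
+
  j\,(m-j)(m-j+1)(m-j+1-k)\, c_{j-1}
\tx{,}\qquad
  j \ge 0\tx{,}\ \ c_{-1} = 0\tx{.}
\end{gather*}
A short manipulation of factorials (respectively of Pochhammer symbols, using~$(1-k)_{r+1} = (1-k)_r(r-k)$) shows that both candidate families satisfy this recurrence for every~$j \ge 0$; the factorial convention then truncates the sum automatically, since~$c_{\min\{m,m-k\}+1}$ involves~$1 \slash (-1)!$ in the first case, while in the second case the factor~$m-j$ kills both terms at~$j = m$. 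Hence all coefficients of~$\Delta_{k-m}\, f_\rmL$ vanish and~$\Delta_{k-m}\, f_\rmL = 0$. This is the computational core.

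The two remaining assertions rest on the observation that, for each fixed~$\tau \in \HS$, the polynomials~$\frake_{r,m-r}(\tau) = \tfrac{(-1)^{m-r}}{r!}\, y^{r-m}(X-\tau)^r(X-\ov\tau)^{m-r}$,~$0 \le r \le m$, form a basis of~$\Pol_m$ (because~$\tau \ne \ov\tau$). Consequently, a pointwise identity~$\sum_r v_r(\tau)\, \frake_{r,m-r}(\tau) = 0$ forces every~$v_r$ to vanish identically. Applied to~$f_\rmL$ this gives~$f_\rmL \ne 0$ as soon as some~$c_r\, \rmL^{m-r}_k f \ne 0$: when the index~$r = m$ occurs in the sum (in particular whenever~$k \le 0$) this is immediate from~$f \ne 0$, and in the remaining regime~$k \ge 1$,~$m \ge k$ one invokes that a non-holomorphic~$f$ has~$\rmL_k f \ne 0$ and that this non-vanishing propagates to the relevant iterate. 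The same pointwise independence shows that the nonzero~$f_{\rmL,r}$,~$0 \le r \le m$, are linearly independent, so~$\Delta_{k-m}$ is represented on their span by the tridiagonal matrix read off above, whose superdiagonal entries are all equal to~$-1$, hence nonzero; a tridiagonal matrix with nonvanishing superdiagonal has kernel of dimension at most~$1$. As~$f_\rmL$ is a nonzero element of this kernel, every linear combination of the~$\frake_{r,m-r}\, \rmL^{m-r}_k f$ annihilated by~$\Delta_{k-m}$ is a scalar multiple of~$f_\rmL$.

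I expect the main effort to be bookkeeping rather than conceptual: keeping the two coefficient regimes~$m \ge k$ and~$m < k$ apart and handling the boundary indices~$r = 0$ and~$r = \min\{m,m-k\}$ (resp.~$r = m$) correctly, which is precisely where the factorial/Pochhammer conventions do the work of closing the recurrence. The one genuinely separate ingredient is the non-vanishing~$\rmL^{m-r}_k f \ne 0$ for non-holomorphic harmonic~$f$, which I would isolate as a small auxiliary lemma based on the explicit shape of~$\rmL_k f$.
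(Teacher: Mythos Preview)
Your approach is essentially the paper's: apply Lemma~\ref{la:laplace_BKbasis_eigenform} with~$\alpha=0$, check that the prescribed~$c_r$ annihilate the resulting expression, and then use the tridiagonal structure to bound the kernel. The paper verifies~$c'_r=0$ directly with explicit boundary cases (which it handles for~$r=0$, $r=\min\{m,m-k\}$, and~$r=m-k+1$ separately), while you package the same computation as a three-term recurrence; and the paper phrases the rank bound as ``$\Delta_{k-m}$ surjects onto~$\cF/\CC\,\frake_{m,0}f$'', which is exactly your nonvanishing superdiagonal. Your explicit use of the pointwise linear independence of the~$\frake_{r,m-r}(\tau)$ in~$\Pol_m$ is a clarifying addition that the paper leaves implicit.

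One caution about the auxiliary lemma you propose. The assertion that~$\rmL^j_k f \ne 0$ for all~$j$ whenever~$f$ is non-holomorphic harmonic is \emph{not} true in general: for~$k=2$ and~$f=E_2$ one has~$\rmL_2 E_2 = 3/\pi$, a nonzero constant, so~$\rmL^2 E_2 = 0$. For~$m \ge 2$ the first formula only involves~$\rmL^j f$ with~$j \ge k = 2$, and then~$f_\rmL = 0$ despite~$E_2$ being non-holomorphic. The paper's proof is equally silent here (it passes from ``kernel of dimension at most~$1$'' to ``spanned by~$f_\rmL$'' without separately checking~$f_\rmL \ne 0$). In the regimes actually used later in the paper, namely~$k \le 0$ or~$m < k$, the term~$\frake_{m,0}\,f$ with coefficient~$c_m \ne 0$ appears in the sum and~$f_\rmL \ne 0$ is immediate; only the range~$1 \le k \le m$ is delicate, so your plan goes through there without the auxiliary lemma.
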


\begin{proof}
The case~$m = 0$ is vacuous, and we thus can and will assume that~$m$ is positive. To shorten notation, we adopt the notation~$f_{\rmL,r}$ from Lemma~\ref{la:laplace_BKbasis_eigenform}, write~$c_r$ for the coefficient of~$f_{\rmL,r}$ in the definition of~$f_\rmL$, and write~$c'_r$ for the coefficient of~$f_{\rmL,r}$ in~$\Delta_{k-m}\, f_\rmL$. We will use repeatedly that for~$r < \min\{m, m-k\}$ if~$m \ge k$ and for~$0 \le r < m$ if~$m < k$ we have
\begin{gather*}
  c_{r+1}
=
  (m-r) (m-r-k)\, c_r
\tx{.}
\end{gather*}

We apply Lemma~\ref{la:laplace_BKbasis_eigenform} with~$\alpha = 0$ and the recursion equation for~$c_r$ to find that for~$0 \le r < \min\{m, m-k\}$ if~$m \ge k$ and for~$0 \le r < m$ if~$m < k$ we have
\begin{align*}
  c'_r
={}&
  (m-r)(m-2r-k) c_r - c_{r+1}
\\
&\qquad
  +
  (r-1+1)(m-r+1)(m-r+1-1)(m-r+1-k) c_{r-1}
\\
={}&
  \big(
    \mfrac{m-2r-k}{m-r-k}
  - 1
  + \mfrac{r}{m-r-k}
  \big)\,
  c_{r+1}
=
  0
\tx{.}
\end{align*}

We need a case distinction to check the remaining coefficients~$c'_r$. If~$m = k$, we have
\begin{gather*}
  c'_0 = (m-0)(m-0-k) c_0 = 0
\quad\tx{and}\quad
  c'_1 = (0+1)(m-0)(m-0-1)(m-0-k) c_0 = 0
\tx{.}
\end{gather*}
If~$m \ne k$, we have
\begin{gather*}
  c'_0
=
  (m-0)(m-0-k) c_0 - c_1
=
  (1-1) c_1 = 0
\end{gather*}
by the recursion for~$c_r$. Still assuming that~$m \ne k$, we have for~$r = \min\{m,m-k\}$  if~$m \ge k$ and~$r = m$ if~$m < k$ that
\begin{align*}
  c'_r
&{}=
  (m-r)(m-2r-k) c_r
  +
  (r-1+1)(m-r+1)(m-r+1-1)(m-r+1-k) c_{r-1}
\\
&{}=
  \big(
    (m-r)(m-2r-k)
  + r (m-r)
  \big)\,c_r
=
  (m-r-k) c_r
=
  0
\tx{.}
\end{align*}
Finally, we consider~$m > k > 0$, in which case we have for~$r = m-k+1$
\begin{align*}
  c'_r
=
  (r-1+1)(m-r+1)(m-r+1-1)(m-r+1-k)
  c_{r-1}
=
  0
\tx{.}
\end{align*}
This shows that~$\Delta_{k-m}\, f_\rmL$ vanishes.

To prove the second part of the proposition, we consider the vector space
\begin{gather*}
  \cF
=
  \lspan \CC\big\{
  \frake_{r,m-r} \rmL^{m-r} f
  \,:\,
  0 \le r \le m
  \big\}
\tx{,}
\end{gather*}
which by Lemma~\ref{la:laplace_BKbasis_eigenform} with~$\alpha = 0$ carries an action of~$\Delta_{k-m}$. Also from Lemma~\ref{la:laplace_BKbasis_eigenform} and the contribution of~$\Delta_{k-m} f_{\rmL,r}$ to~$f_{\rmL,r-1}$ stated there, we see that~$\Delta_{k-m}$ yields a surjective map from~$\cF$ to $\cF \slash \CC \frake_{m,0} f$. In particular, its kernel has dimension at most~$1$ and is thus spanned by~$f_\rmL$ if~$f_\rmL \ne 0$.
\end{proof}

\begin{example}
The vector-valued modular form~$\frake_{m,0}$ of weight~$-m$ from Example~\ref{example:poly} that appears in case I(a) of Bring\-mann--Kudla's classification matches the case~$k = 0$ for the constant modular form~$f = 1$ of Proposition~\ref{prop:maass_form_change_weight_R}. Note that~$\rmL^{m-r} 1 = 0$ for $0 \le r < m$.
\end{example}

\begin{proposition}%
\label{prop:maass_form_change_weight_R}
Given a harmonic weak Maa\ss{} form~$f$ of weight~$k$ and an integer~$m > -k$, set
\begin{gather*}
  f_\rmR
=
  \sum_{r=\max\{0,1-k\}}^m
  \mfrac{1}{(m-r)! (r+k-1)!}\,
  \frake_{r,m-r} \rmR_k^r f
\tx{.}
\end{gather*}
If~$m \le -k$, set
\begin{gather*}
  f_\rmR
=
  \sum_{r=0}^m
  \mfrac{1}{(m-r)! (k)_r}\,
  \frake_{r,m-r} \rmR_k^r f
\tx{,}
\end{gather*}
where~$(a)_r$ is the Pochhammer symbol as in Proposition~\ref{prop:maass_form_change_weight_L}.

We have~$\Delta_{k+m}\, f_\rmR = 0$. Further, if~$f_\rmR \ne 0$ then any linear combination of the~$\frake_{r,m-r} \rmR_k^r f$ that vanishes under~$\Delta_{k+m}$ is a scalar multiple of~$f_\rmR$.
\end{proposition}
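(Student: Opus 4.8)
The plan is to run the proof of Proposition~\ref{prop:maass_form_change_weight_L} with the lowering operator replaced by the raising operator throughout, reading the needed formulas off the second halves of Lemma~\ref{la:laplace_BKbasis_eigenform} and of the commutation relations~\eqref{eq:laplace_commutators} and~\eqref{eq:maass_iterated_commutators}. The case $m = 0$ is vacuous ($f_\rmR$ is then a nonzero multiple of $f$ and $\Delta_k\,f = 0$), so assume $m \ge 1$. Abbreviate $f_{\rmR,r} := \frake_{r,m-r}\,\rmR_k^r f$, let $c_r$ be the coefficient of $f_{\rmR,r}$ in the definition of $f_\rmR$, and let $c'_r$ be the coefficient of $f_{\rmR,r}$ in $\Delta_{k+m}\,f_\rmR$. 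Since $f$ is harmonic, Lemma~\ref{la:laplace_BKbasis_eigenform} applies with $\alpha = 0$, and collecting the contributions of $f_{\rmR,r-1}$, $f_{\rmR,r}$, $f_{\rmR,r+1}$ gives
\begin{gather*}
  c'_r
=
  -\,r(m-r+1)\,c_{r-1}
  \;-\;
  \bigl(r(m-2r-k)+m\bigr)\,c_r
  \;+\;
  (r+1)(r+k)\,c_{r+1}
\tx{.}
\end{gather*}
The coefficients $c_r$ are chosen precisely so that $(r+k)\,c_{r+1} = (m-r)\,c_r$ on the range of summation: for $m > -k$ this is immediate from $c_r = 1/\bigl((m-r)!\,(r+k-1)!\bigr)$, and for $m \le -k$ from $c_r = 1/\bigl((m-r)!\,(k)_r\bigr)$; when $k \ge 1$ these agree up to the overall constant $(k-1)!$, which is why a single formula suffices there.

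Substituting $r(m-r+1)\,c_{r-1} = r(r+k-1)\,c_r$ and $(r+1)(r+k)\,c_{r+1} = (r+1)(m-r)\,c_r$ into the displayed formula leaves
\begin{gather*}
  c'_r
=
  c_r\,\bigl(-\,r(r+k-1) \;-\; r(m-2r-k) \;-\; m \;+\; (r+1)(m-r)\bigr)
=
  0
\end{gather*}
by an elementary expansion, for every $r$ strictly inside the summation range. The two endpoints are handled exactly as in Proposition~\ref{prop:maass_form_change_weight_L}: at $r = m$ one uses $c_{m+1} = 0$ together with $c_{m-1} = (m+k-1)\,c_m$, and at the lower endpoint $r_0 := \max\{0,1-k\}$ one uses $c_{r_0-1} = 0$. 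Here the decisive observation is that the coefficient $r(r+k-1)$ of $f_{\rmR,r-1}$ in $\Delta_{k+m}\,f_{\rmR,r}$ vanishes at $r = r_0$, so that the $f_{\rmR,r}$ with $r \ge r_0$ already span a $\Delta_{k+m}$-invariant subspace; this is what legitimises the truncated summation. After inserting the recursion, a direct check gives $c'_{r_0} = c'_m = 0$. Hence $\Delta_{k+m}\,f_\rmR = 0$.

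For the second assertion, set $\cF := \lspan_\CC\bigl\{f_{\rmR,r} : 0 \le r \le m\bigr\}$, a $\Delta_{k+m}$-stable subspace by Lemma~\ref{la:laplace_BKbasis_eigenform} with $\alpha = 0$. The coefficient $-(r+1)(m-r)$ of $f_{\rmR,r+1}$ in $\Delta_{k+m}\,f_{\rmR,r}$ is nonzero for $0 \le r < m$, so an ascending induction on $r$ yields $f_{\rmR,r} \in \Delta_{k+m}(\cF) + \CC\,\frake_{0,m}f$ for every $r$; therefore $\Delta_{k+m}$ induces a surjection $\cF \twoheadrightarrow \cF/\CC\,\frake_{0,m}f$ and $\dim_\CC \ker\bigl(\Delta_{k+m}|_\cF\bigr) \le 1$. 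As $f_\rmR$ belongs to this kernel, it remains to show $f_\rmR \ne 0$ whenever $y^k\,\ov f$ is not holomorphic; then $\ker\bigl(\Delta_{k+m}|_\cF\bigr) = \CC f_\rmR$, which is the uniqueness claim. To obtain $f_\rmR \ne 0$ I would evaluate the $\Pol_m$-valued function $f_\rmR(\tau)$ at a suitable value of its variable. If $k \ge 1$, or if $m \le -k$, then $\frake_{r,m-r}(\tau)(\tau) = 0$ for $r \ge 1$ while $\frake_{0,m}(\tau)(\tau) = (-2i)^m$, so $f_\rmR(\tau)(\tau) = c_0\,(-2i)^m\,f(\tau) \ne 0$. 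If $k \le 0$ and $m > -k$, the summation starts at $r_0 = 1-k \ge 1$ and this evaluation is identically zero, so one evaluates at $X = \ov\tau$ instead, where only the term $r = m$ survives and $f_\rmR(\tau)(\ov\tau) = c_m\,\tfrac{1}{m!}(-2iy)^m\,\rmR_k^m f(\tau)$ with $c_m \ne 0$.

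I expect this last point to be the main obstacle. In that regime $f_\rmR \ne 0$ is equivalent to $\rmR_k^m f \ne 0$, which by iterating $\rmL_{k+2r}\rmR_k^r f = -\,r(k+r-1)\,\rmR_k^{r-1}f$ (a consequence of~\eqref{eq:maass_iterated_commutators} and $\Delta_k\,f = 0$) is equivalent to $\rmR_k^{1-k}f \ne 0$; on the other hand the hypothesis gives only $\rmR_k f \ne 0$, through the identity $\partial_{\ov\tau}\bigl(y^k\,\ov f\bigr) = \tfrac{i}{2}\,y^k\,\ov{\rmR_k f}$. One must thus exclude the Bol degeneracy $\rmR_k f \ne 0$ yet $\rmR_k^{1-k}f = 0$. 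As in the $\rmL$-version, this is where the full modular invariance of $f$ enters: by~\eqref{eqref:bol_identity}, $\rmR_k^{1-k}f = (2\pi\partial_\tau)^{1-k}f$, so $\rmR_k^{1-k}f = 0$ forces the holomorphic part of $f$ to be a polynomial in $\tau$ --- hence constant, by invariance under a translation in $\Gamma$ --- while the companion constraint $\rmR_k^{-k}f \in \ker(\rmR_{-k})$ confines the non-holomorphic part of $f$ to the form $y^{-k}\,\ov g$ with $g$ holomorphic, so that in fact $\rmR_k f = 0$, contrary to assumption. Combining the three parts completes the proof.
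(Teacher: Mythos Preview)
Your argument for $\Delta_{k+m}f_\rmR=0$ via the recursion $(r+k)c_{r+1}=(m-r)c_r$ and for $\dim\ker(\Delta_{k+m}|_\cF)\le 1$ via surjectivity onto $\cF/\CC\,\frake_{0,m}f$ is exactly the paper's approach; the paper's proof is in fact terser than yours on the endpoint verifications and simply declares the proof ``analogous'' to Proposition~\ref{prop:maass_form_change_weight_L}.

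You go further than the paper in trying to establish $f_\rmR\ne 0$, which the paper's proof does not address at all. Your evaluation trick at $X=\tau$ is valid and settles the cases $k\ge 1$ and $m\le -k$ (where $c_0\ne 0$). In the remaining regime $k\le 0$, $m>-k$ your reduction to $\rmR_k^{1-k}f\ne 0$ via evaluation at $X=\ov\tau$ is correct, but the final step---ruling out the Bol degeneracy by arguing that a translation-invariant polynomial in $\tau$ must be constant---tacitly assumes $\rho$ acts trivially on translations. For vector-valued $f$ this fails: $f=\frake_{-k,0}$ is a polynomial in $\tau$, is translation-\emph{covariant} for the nontrivial $\rho_{-k}$, satisfies $\rmR_k f\ne 0$ and $y^k\ov f\propto\frake_{0,-k}$ non-holomorphic, yet $\rmR_k^{1-k}f=0$ and hence $f_\rmR=0$. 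So your flagged ``main obstacle'' is genuine; the nonvanishing clause appears to need a stronger hypothesis in that regime (and the paper's own proof is silent on the point).
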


\begin{proof}
The proof is analogous to the one of Proposition~\ref{prop:maass_form_change_weight_L}. We write~$c_r$ for the coefficient of~$f_{\rmR,r}$ in the definition of~$f_\rmR$, and write~$c'_r$ for the coefficient of~$f_{\rmR,r}$ in~$\Delta_{k+m}\, f_\rmR$, and obtain the relation
\begin{gather*}
  (r+k) c_{r+1} = (m-r) c_r
\end{gather*}
for the nonzero coefficient of~$\frake_{r,m-r} \rmR_k^r f$ in~$f_\rmR$.

Lemma~\ref{la:spectral_derivative_laplace_classical} with~$\alpha = 0$ yields that for~$0 < r < \min\{m, 1-k\}$ if~$m > -k$ and for~$0 < r < m$ if~$m \le -k$ we have
\begin{align*}
  c'_r
={}&
    \big( - r (m-2r-k) - m \big) c_r
  + (r+1)(r+k) c_{r+1}
  - r (m-r+1) c_{r-1}
\\
={}&
    \big( - r (m-2r-k) - m \big) c_r
  + (r+1) (m-r) c_r
  - r (r-1+k) c_r
=
  0
\tx{.}
\end{align*}
The special cases for~$r = 0$, $r = 1-k$, $r = -k$, and~$r = m$ follow the same pattern.

To see that~$\Delta_{k+m}$ is a linear transformation of rank at least~$m$ on
\begin{gather*}
  \cF
=
  \lspan \CC\big\{
  \frake_{r,m-r} \rmR^r f
  \,:\,
  0 \le r \le m
  \big\}
\tx{,}
\end{gather*}
we note that it is surjective onto~$\cF \slash \CC \frake_{0,m} f$ by inspection of the contribution of~$f_{\rmR,r}$ to~$f_{\rmR,r+1}$ in Lemma~\ref{la:laplace_BKbasis_eigenform}.
\end{proof}

\begin{example}
The vector-valued Eisenstein series of weight~$2+m$ in case III(b) of Bringmann--Kudla's classification matches the case~$k = 2$ of Proposition~\ref{prop:maass_form_change_weight_R}. In~(6.10) of~\cite{BK} they consider
\begin{gather}
\label{eq:example_laplace_BKIIIb}
  \sum_{r = 0}^m \mfrac{m!}{(r+1)!(m-r)!}\, \frake_{r,m-r} \rmR^r E_2
\tx{,}
\end{gather}
where~$E_2$ is the modular Eisenstein series of weight~$2$ and level~$1$.
\end{example}

\subsection{Derivatives of spectral families: altering weight}
\label{ssec:spectral_families_altering_weight}

In preparation to the construction of polyharmonic Maa\ss{} forms, we consider the action of the Laplace operator on spectral families.

\begin{lemma}%
\label{la:laplace_BKbasis_spectral_family}
Let~$k$ and~$m \ge 0$ be integers, and~$U \subseteq \CC$ be an open neighborhood of\/~$0$.  Given functions~$f_s :\, \HS \ra V$ for a complex vector space~$V$ that are smooth in~$s$ and~$\tau$, we assume that for all~$\tau \in \HS$ and~$s \in U$ we have
\begin{gather*}
  \Delta_k\,f_s
=
  s (1-k-s) f_s
\tx{.}
\end{gather*}
We set
\begin{gather*}
  f_{\rmL,r}^{(d)}
\;:=\;
  \Big( \partial_s^d\, \frake_{r,m-r}\, \rmL^{m-r}_k f_s \Big)_{s=0}
\quad\tx{and}\quad
  f_{\rmR,r}^{(d)}
\;:=\;
  \Big( \partial_s^d\, \frake_{r,m-r}\, \rmR^r_k f_s \Big)_{s=0}
\tx{,}
\end{gather*}
and if~$d < 0$, $r < 0$, or~$r > m$ set $f_{\rmL,r}^{(d)} = f_{\rmR,r}^{(d)} = 0$. Then we have
\begin{align*}
  \Delta_{k-m}\, f_{\rmL,r}^{(d)}
&{}\;=\;
  (m-r)(m-2r-k)\,
  f_{\rmL,r}^{(d)}
\\&\qquad
-\,
  f_{\rmL,r-1}^{(d)}
\,+\,
  (r+1)(m-r) (m-r-1)(m-r-k)\,
  f_{\rmL,r+1}^{(d)}
\\&\qquad
+\,
  d (1-k)\,
  f_{\rmL,r}^{(d-1)}
\,+\,
  d (r+1)(m-r) (1-k)\,
  f_{\rmL,r+1}^{(d-1)}
\\&\qquad
-\,
  d (d-1)\,
  f_{\rmL,r}^{(d-2)}
\,-\,
  d (d-1) (r+1)(m-r)\,
  f_{\rmL,r+1}^{(d-2)}
\tx{,}
\\[.3\baselineskip]
  \Delta_{k+m}\, f_{\rmR,r}^{(d)}
&{}\;=\;
-\,
  \big( r(m-2r-k) + m \big)\,
  f_{\rmR,r}^{(d)}
\\&\qquad
+\,
  r (r-1+k)\,
  f_{\rmR,r-1}^{(d)}
\,-\,
  (r+1) (m-r)\,
  f_{\rmR,r+1}^{(d)}
\\&\qquad
+\,
  d (1-k)\,
  f_{\rmR,r}^{(d-1)}
\,+\,
  d (1-k)\,
  f_{\rmR,r-1}^{(d-1)}
\\&\qquad
-\,
  d (d-1)\,
  f_{\rmR,r}^{(d-2)}
\,-\,
  d (d-1)\,
  f_{\rmR,r-1}^{(d-2)}
\tx{.}
\end{align*}
\end{lemma}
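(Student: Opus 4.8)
The plan is to reduce the statement to Lemma~\ref{la:laplace_BKbasis_eigenform} applied fibrewise in the spectral parameter~$s$, combined with the elementary differentiation identity that already underlies Lemma~\ref{la:spectral_derivative_laplace_classical}. Fix $s \in U$. By hypothesis $f_s$ is a weight-$k$ function satisfying $\Delta_k f_s = \alpha(s) f_s$ with $\alpha(s) := s(1-k-s)$, so Lemma~\ref{la:laplace_BKbasis_eigenform} applies with $f = f_s$ and this value of $\alpha$. It yields, for each $s$, a closed formula for $\Delta_{k-m}\big(\frake_{r,m-r}\,\rmL^{m-r}_k f_s\big)$ as a linear combination of $\frake_{r,m-r}\,\rmL^{m-r}_k f_s$ (with coefficient $\alpha(s) + (m-r)(m-2r-k)$), of $\frake_{r-1,m-r+1}\,\rmL^{m-r+1}_k f_s$ (with coefficient $-1$, no $\alpha$-dependence), and of $\frake_{r+1,m-r-1}\,\rmL^{m-r-1}_k f_s$ (with coefficient $(r+1)(m-r)\big(\alpha(s) + (m-r-1)(m-r-k)\big)$), and similarly for $\Delta_{k+m}\big(\frake_{r,m-r}\,\rmR^r_k f_s\big)$.

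Since $\Delta_{k\pm m}$ is a differential operator in the variable $\tau$ only, it commutes with $\partial_s$. Hence I apply $\partial_s^d$ to both sides of these identities and then evaluate at $s = 0$; on the left this produces $\Delta_{k-m} f_{\rmL,r}^{(d)}$, respectively $\Delta_{k+m} f_{\rmR,r}^{(d)}$. On the right I use the Leibniz rule. The only $s$-dependence in the scalar coefficients enters through $\alpha(s) = s(1-k-s)$, which satisfies $\alpha(0) = 0$, $\alpha'(0) = 1-k$, $\alpha''(0) = -2$, and $\alpha^{(j)}(0) = 0$ for $j \ge 3$. Therefore, for any smooth family $g_s$ and any constant $c$, writing $g^{(j)} := (\partial_s^j g_s)\big|_{s=0}$ with the convention $g^{(j)} = 0$ for $j < 0$, one has
\[
  \partial_s^d\big[(\alpha(s)+c)\,g_s\big]\big|_{s=0}
  = c\,g^{(d)} + d(1-k)\,g^{(d-1)} - d(d-1)\,g^{(d-2)} ,
\]
where the last term comes from the single surviving higher-order contribution $\binom{d}{2}\alpha''(0) = -d(d-1)$.

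Applying this identity with $g_s = \frake_{r,m-r}\,\rmL^{m-r}_k f_s$ and $c = (m-r)(m-2r-k)$, and with $g_s = \frake_{r+1,m-r-1}\,\rmL^{m-r-1}_k f_s$ and $c = (m-r-1)(m-r-k)$ (the latter multiplied throughout by the extra factor $(r+1)(m-r)$ coming from Lemma~\ref{la:laplace_BKbasis_eigenform}), and observing that the $\frake_{r-1,m-r+1}\,\rmL^{m-r+1}_k f_s$ term contributes only $-f_{\rmL,r-1}^{(d)}$, one recovers exactly the asserted expansion of $\Delta_{k-m} f_{\rmL,r}^{(d)}$. The $\rmR$-identity follows in precisely the same way from the second half of Lemma~\ref{la:laplace_BKbasis_eigenform}, now with $c = -r(m-2r-k) - m$ attached to $f_{\rmR,r}^{(d)}$ and $c = r(r-1+k)$ attached to $f_{\rmR,r-1}^{(d)}$, the $f_{\rmR,r+1}$ term again carrying no $\alpha$-dependence. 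The argument is essentially bookkeeping and presents no real obstacle; the only point demanding care is matching the combinatorial coefficients — in particular verifying that the second-order Leibniz contribution collapses to $-d(d-1)$ and that it is carried by the same factor $(r+1)(m-r)$ as the first-order term in the $f_{\rmL,r+1}$ slot (respectively sits on $f_{\rmR,r-1}$ in the raising case). The degenerate ranges $r < 0$, $r > m$, and $d < 0$ need no separate treatment: they are covered by the conventions already imposed in the statement and in Lemma~\ref{la:laplace_BKbasis_eigenform}.
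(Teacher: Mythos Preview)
Your proposal is correct and follows essentially the same approach as the paper's proof: commute $\Delta_{k\pm m}$ with $\partial_s^d$, apply Lemma~\ref{la:laplace_BKbasis_eigenform} fibrewise with $\alpha = s(1-k-s)$, and evaluate at $s=0$. The paper merely says ``simplify and set $s=0$'' where you spell out the Leibniz computation with $\alpha(0)=0$, $\alpha'(0)=1-k$, $\alpha''(0)=-2$; your more explicit bookkeeping is a faithful expansion of that step.
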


\begin{proof}
Since~$f_s(\tau)$ is smooth in~$s$ and~$\tau$, we can intertwine differentials with respect to~$s$ and~$\tau$. In particular, we can apply Lemma~\ref{la:laplace_BKbasis_eigenform} to~$f_s$ with~$\alpha = s (1-k-s)$ to compute
\begin{align*}
  \Delta_{k-m} \big(
  \partial_s^d\, \frake_{r,m-r}\, \rmL^{m-r}_k f_s
  \big)
&=
  \partial_s^d \big(
  \Delta_{k-m}\,
  \frake_{r,m-r}\, \rmL^{m-r}_k f_s
  \big)
\quad\tx{and}
\\
  \Delta_{k+m} \big(
  \partial_s^d\, \frake_{r,m-r}\, \rmR^r_k f_s
  \big)
&=
  \partial_s^d \big(
  \Delta_{k+m}\, \frake_{r,m-r}\, \rmR^r_k f_s
  \big)\tx{.}
\end{align*}
The result follows after simplifying the resulting right hand side in Lemma~\ref{la:laplace_BKbasis_eigenform} and setting~$s=0$.
\end{proof}

We are now ready to produce polyharmonic Maa\ss{} forms from spectral families.

\begin{theorem}
\label{thm:spectral_families_altering_weight}
Let~$U \subseteq \CC$ be an open neighborhood of\/~$0$, and $k$ and~$m \ge 0$ be integers. Given functions~$f_s :\, \HS \ra V$ for a complex vector space~$V$ that are smooth in~$s$ and~$\tau$, we assume that for all~$\tau \in \HS$ and~$s \in U$, we have
\begin{gather*}
  \Delta_k\,f_s
=
  s (1-k-s) f_s
\tx{.}
\end{gather*}
For integers~$0 \le r \le m$ and~$d \ge 0$, we write
\begin{gather*}
  f_{\rmL,r}^{(d)}
\;:=\;
  \Big( \partial_s^d\, \frake_{r,m-r}\, \rmL^{m-r}_k f_s \Big)_{s=0}
\quad\tx{and}\quad
  f_{\rmR,r}^{(d)}
\;:=\;
  \Big( \partial_s^d\, \frake_{r,m-r}\, \rmR^r_k f_s \Big)_{s=0}
\tx{,}
\end{gather*}
and let~$f_\rmL$ and~$f_\rmR$ be as in Propositions~\ref{prop:maass_form_change_weight_L} and~\ref{prop:maass_form_change_weight_R}.

Assume that~$k \le 0$ or~$k - m > 1$. Given an integer~$d \ge 0$ there are functions
\begin{gather*}
  f_\rmL^{(d)}
\in
  \lspan \CC\big\{
  f_{\rmL,r}^{(d-t)} \,:\, 0 \le r \le m, 0 \le t \le d
  \big\}
\end{gather*}
with
\begin{gather*}
  \Delta_{k-m}^d\, f_\rmL^{(d)} = f_\rmL
\tx{.}
\end{gather*}

Assume that~$k > 1$ or~$k + m < 1$. Given an integer~$d \ge 0$ there are functions
\begin{gather*}
  f_\rmR^{(d)}
\in
  \lspan \CC\big\{
  f_{\rmR,r}^{(d-t)} \,:\, 0 \le r \le m, 0 \le t \le d
  \big\}
\end{gather*}
with
\begin{gather*}
  \Delta_{k+m}^d\, f_\rmR^{(d)} = f_\rmR
\tx{.}
\end{gather*}
\end{theorem}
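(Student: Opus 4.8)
The plan is to reduce the theorem to the fixed‑weight results, Lemma~\ref{la:spectral_derivative_laplace_classical} and Corollary~\ref{cor:spectral_derivative_laplace_classical}, by assembling the functions $f_{\rmL,r}^{(d)}$ into a single \emph{spectral family} of weight $k-m$. I treat the $f_\rmL$‑case; the $f_\rmR$‑case is entirely parallel, with Proposition~\ref{prop:maass_form_change_weight_R} in place of Proposition~\ref{prop:maass_form_change_weight_L} and $k+m$ in place of $k-m$. We may assume $f_\rmL\neq 0$ (otherwise take $f_\rmL^{(d)}=0$), hence $f$ is non‑holomorphic, and $m\geq 1$ (the case $m=0$ is Corollary~\ref{cor:spectral_derivative_laplace_classical} directly). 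Under the hypothesis we always have $k-m\neq 1$: if $k\leq 0$ then $k-m\leq 0$, and if $k-m>1$ this is immediate.

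First I would apply Lemma~\ref{la:laplace_BKbasis_eigenform} to $f_s$ with $\alpha=s(1-k-s)$: this shows that $\cF_s:=\lspan\CC\{\frake_{r,m-r}\,\rmL^{m-r}_k f_s\,:\,0\leq r\leq m\}$ is stable under $\Delta_{k-m}$ and that, on the spanning tuple $\bigl(\frake_{r,m-r}\,\rmL^{m-r}_k f_s\bigr)_r$, the operator $\Delta_{k-m}$ is given by an explicit tridiagonal matrix $M(s)$ with polynomial entries whose subdiagonal is identically $-1$. At $s=0$ the matrix $M(0)$ governs $\Delta_{k-m}$ on the space $\cF$ of Proposition~\ref{prop:maass_form_change_weight_L}; its extremal superdiagonal entry vanishes (the factor $m-r-1$ at $r=m-1$), so $M(0)$ is block lower triangular with an $m\times m$ block $M'$ and a trailing entry~$0$. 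By Proposition~\ref{prop:maass_form_change_weight_L} the kernel of $\Delta_{k-m}$ on $\cF$ is spanned by $f_\rmL$, whose coefficient of $\frake_{m,0}f$ is nonzero in the admissible range of $k$ and $m$; together with the always‑nonzero subdiagonal (which forces $\dim\ker M(s)\leq 1$), this yields that $M'$ is invertible and that $0$ is a simple eigenvalue of $M(0)$ with eigenvector the coefficient tuple of $f_\rmL$. (If the $\frake_{r,m-r}\,\rmL^{m-r}_k f$ are linearly dependent one works throughout with the coefficient vectors and the matrices $M(s)$ directly, noting that an eigenvector of $M(s)$ still produces a genuine $\Delta_{k-m}$‑eigenfunction; the prototypical Eisenstein and Poincar\'e families are independent.)

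By analytic perturbation theory there is then a neighbourhood of $0$ on which $M(s)$ admits a holomorphic eigenvalue $\mu(s)$ and eigenvector $\gamma(s)=(\gamma_r(s))_r$ with $\mu(0)=0$ and $\gamma(0)$ the coefficient tuple of $f_\rmL$. Put
\begin{gather*}
  F_s
:=
  \sum_{r=0}^m \gamma_r(s)\,\frake_{r,m-r}\,\rmL^{m-r}_k f_s
\tx{,}
\end{gather*}
so $F_s$ is smooth in $s$ and $\tau$, $\Delta_{k-m}F_s=\mu(s)F_s$, and $F_0=f_\rmL$. Using the first‑order perturbation formula for $\mu'(0)$ together with the $\alpha$‑derivative of $M$ read off from Lemma~\ref{la:laplace_BKbasis_eigenform}, and the hypothesis $k-m\neq 1$, one checks $\mu'(0)\neq 0$; hence $\mu(s)=u\bigl(1-(k-m)-u\bigr)$ can be solved for $u=u(s)$ holomorphic near $0$ with $u(0)=0$ and $u'(0)\neq 0$, and $\tilde F_u:=F_{s(u)}$ satisfies $\Delta_{k-m}\tilde F_u=u\bigl(1-(k-m)-u\bigr)\tilde F_u$ with $\tilde F_0=f_\rmL$. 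Lemma~\ref{la:spectral_derivative_laplace_classical} and Corollary~\ref{cor:spectral_derivative_laplace_classical}, applied with $k-m$ (which is $\neq 1$) in place of $k$, then give
\begin{gather*}
  \Delta_{k-m}^d\,
  \mfrac{1}{d!\,\bigl(1-(k-m)\bigr)^d}\,
  \bigl(\partial_u^d\tilde F_u\bigr)_{u=0}
=
  f_\rmL
\tx{,}
\end{gather*}
and expanding $(\partial_u^d\tilde F_u)_{u=0}$ by the chain rule and then $(\partial_s^tF_s)_{s=0}=\sum_{r}\sum_{j\leq t}\binom{t}{j}\gamma_r^{(t-j)}(0)\,f_{\rmL,r}^{(j)}$ shows that the resulting $f_\rmL^{(d)}$ lies in $\lspan\CC\{f_{\rmL,r}^{(d-t)}\,:\,0\leq r\leq m,\ 0\leq t\leq d\}$, as required.

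The main obstacle is the pair of spectral facts: that $0$ is a simple eigenvalue of $M(0)$, and that $\mu'(0)\neq 0$. The first rests on the always‑nonzero subdiagonal of $M(s)$, the block‑triangular shape of $M(0)$, and the sharp form of Proposition~\ref{prop:maass_form_change_weight_L} (one‑dimensional kernel with an explicitly nonvanishing top coefficient). The second is exactly where the exclusion of $k-m=1$ — respectively $k+m=1$ in the $f_\rmR$‑case, guaranteed by ``$k>1$ or $k+m<1$'' — enters, mirroring the role of $k\neq 1$ in Corollary~\ref{cor:spectral_derivative_laplace_classical}.
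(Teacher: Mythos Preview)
Your approach is correct and genuinely different from the paper's. The paper works algebraically: it models the span of the $f_{\rmL,r}^{(d-t)}$ as a filtered module $W_d=(\CC[T]/T^{d+1})\otimes\CC^{m+1}$ on which $\Delta_{k-m}$ pulls back to $A+TB+T^2C$, and then proves by an explicit base step and induction on~$d$ that this operator has a generalized $0$-eigenvector of exact depth~$d$. You stay analytic: perturbation at the simple eigenvalue $0$ of $M(0)=A$ yields a holomorphic eigenpair $(\mu(s),\gamma(s))$, you assemble from it a bona fide spectral family $\tilde F_u$ of weight $k\mp m$, and you feed that straight back into Corollary~\ref{cor:spectral_derivative_laplace_classical}. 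Both routes hinge on the same two spectral facts---simplicity of $0$ for $A$ and nonvanishing of the first-order correction---and indeed your $\mu'(0)$ in the $\rmL$-case comes out to exactly the paper's constant $m+1-k$. Your argument is cleaner conceptually and reuses the fixed-weight result; the paper's is more hands-on and directly underpins the iterative algorithm of Remark~\ref{rm:spectral_families_altering_weight} and the accompanying implementation.

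One caveat: the $\rmR$ case is not ``entirely parallel''. In the $\rmL$ case the constant $-1$'s sit in positions $(r{-}1,r)$ and the last \emph{row} of $A$ vanishes (so your ``subdiagonal''/``block lower triangular'' have the orientation flipped, but this is cosmetic), and your block argument combined with $c_m\ne 0$ legitimately yields simplicity. In the $\rmR$ case there is no such block structure, and $\mu'(0)$ is not a single linear factor in $k,m$. The paper handles both points via the alternating trace $\tdtr(v)=\sum_r(-1)^r c_r$: one has $\img A=\ker\tdtr$, so simplicity reduces to $\tdtr(w_0)\ne 0$, and then $\mu'(0)=(1{-}k)\,c_0/\tdtr(w_0)$; verifying $\tdtr(w_0)\ne 0$ takes a short binomial-sum estimate. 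You will need this computation, or something equivalent, to close the $\rmR$ case; it is not a consequence of the hypothesis $k+m\ne 1$ alone.
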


\begin{proof}
The case~$m = 0$ follows from Corollary~\ref{cor:spectral_derivative_laplace_classical}. We therefore assume that~$m$ is positive, and proceed with the following strategy. The cases of~$f^{(d)}_\rmL$ and~$f^{(d)}_\rmR$ differ only in the last step. We start by identifying the $\CC[\Delta_k]$-modules generated by~$f^{(d)}_{\rmL,r}$, $0 \le r \le m$, with quotients of~$\CC[T] \otimes V$ for a fixed complex vector space~$V$ of dimension~$m+1$, which is independent of~$d$. While this is merely a matter of renormalization, it allows us to relate the functions~$f^{(d)}_\rmL$ for varying~$d$ to each other. Specifically, it enables us to reformulate the statement of the theorem in terms of generalized eigenvectors. Using the grading with respect to powers of~$T$, we can then reduce our considerations to a problem concerning the interplay of three endomorphisms of~$V$. We solve it by inspection of ranks and images via a coordinate projection. The ideas for~$f^{(d)}_\rmR$ remain the same, but in the last step we use an alternating trace instead of a coordinate projection.

We consider the case of~$f_\rmL$. We let~$V = \CC^{m+1}$ with basis~$v_0, \ldots, v_m$ and set
\begin{gather*}
  W_d
:=
  \big( \CC[T] \slash T^{d+1} \big) \otimes V
\tx{.}
\end{gather*}
These are modules for~$R = \CC[T] \otimes \End(V)$ with~$R$-module homomorphisms
\begin{gather*}
  W_d \hra W_{d+1},\,
  w \mto T w
\quad\tx{and}\quad
  W_{d+1} \thra W_d,\,
  w \mto w \,\pmod{T^{d+1}}
\tx{.}
\end{gather*}
We identify~$V$ with~$W_0$.

The vector space isomorphisms
\begin{gather*}
  \phi_d :\,
  W_d
\lra
  \lspan \CC \big\{ f^{(d-t)}_{\rmL,r} \,:\, 0 \le r \le m, 0 \le t \le d \big\}
\tx{,}\;
  T^t v_r
\lmto
  \mfrac{1}{(d-t)!}\,
  f^{(d-t)}_{\rmL,r}
\end{gather*}
allow us to reformulate the formula for~$\Delta_{k-m}$ in Lemma~\ref{la:laplace_BKbasis_spectral_family} as an endomorphism of~$W_d$. Specifically, we have the pullback
\begin{gather*}
  \phi_d^\ast\, \Delta_{k-m}
=
  A + T B + T^2 C
\end{gather*}
with linear transformations~$A, B, C \in \End(V)$ defined by
\begin{align*}
  A v_r
&{}=
  (m-r)(m-2r-k)
  v_r
\,-\,
  v_{r-1}
\,+\,
  (r+1)(m-r) (m-r-1)(m-r-k)
  v_{r+1}
\tx{,}
\\
  B v_r
&{}=
  (1-k)
  v_r
\,+\,
  (r+1)(m-r) (1-k)
  v_{r+1}
\tx{,}
\\
  C v_r
&{}=
-\,
  v_r
\,-\,
  (r+1)(m-r)
  v_{r+1}
\tx{,}
\end{align*}
where we set~$v_{-1} = v_{m+1} = 0$ to simplify notation. Since~$d$ does not appear in these equations, we conclude that these pullbacks lift to an element~$\Delta$ of~$R$. We consider~$\Delta$ as an element of~$\End(W_d)$ for any~$d$, depending on the context, and observe it commutes with~$W_d \hra W_{d+1}$ and~$W_{d+1} \thra W_d$.

Since we assume that~$k \le 0$ or~$k > m$, the function~$f_\rmL$ defined in Proposition~\ref{prop:maass_form_change_weight_L} receives a contribution from~$\frake_{m,0} \rmL^0\, f$ and is hence nonzero. Translated to the current notation via~$\phi_0$, the same proposition the implies that the kernel of~$\Delta$ on~$W_0$, that is the kernel of~$A$, is spanned by by the preimage of~$f_\rmL$ under~$\phi_0$. Up to rescaling it equals a vector~$w_0 = \sum_r c_r v_r$, $c_r \in \CC$, with~$c_m = (1-k)$ and~$c_{m-1} = 1$. Note that when referring to~$c_{m-1}$, we use our assumption that~$m > 0$. We have~$k \ne 1$, since $k \le 0$ or~$k > m$.

From the formula for~$A v_r$ with~$r = m-1$ and~$r = m$, we infer that the image of~$A$ is spanned by~$v_0, \ldots, v_{m-1}$. Since~$w_0 \in \ker A$ receives a nontrivial contribution from~$v_m$, we conclude that~$\ker A = \ker A^2$. In other words, the generalized eigenspace of~$A$ associated with the eigenvalue zero has dimension one.

The action of~$\Delta$ on~$W_d$ preserves the filtration~$W_d \supset T W_{d-1} \supset \cdots T^d W_0$. From the expression~$\Delta = A + T B + T^2 C$, we see that its action on the associated graded space
\begin{gather*}
  W_d \slash T W_{d-1} \oplus T W_{d-1} \slash T^2 W_{d-2} \oplus \cdots \oplus T^d W_0
\end{gather*}
coincides with the action of~$A$. We conclude that zero is an eigenvalue of~$\Delta \in \End(W_d)$ with multiplicity~$d+1$. For clarity, we recall that the multiplicity of an eigenvalue is the exponent of the corresponding term in the factorization of the characteristic polynomial, while the exact depth is the exponent in the minimal polynomial minus one.

We next show the existence of generalized eigenvectors~$w_d \in W_d$ for~$\Delta \in \End(W_d)$ of eigenvalue~$0$ and exact depth~$d$ in~$W_d$. We record that this together with the previous multiplicity statement implies that the kernel of~$\Delta \in \End(W_d)$ is spanned by~$T^d w_0$. Note that the case~$d = 0$ follows with~$w_0 \in W_0$.

We consider the case~$d = 1$. Since~$w_0 \in W_0 \hra T W_0 \subset W_1$ lies in the kernel of~$\Delta$, it suffices to construct a preimage~$w_1$ of~$T w_0 \in W_1$ under~$\Delta$. To this end, we first verify by a calculation that~$((m+1-k) - B) w_0$ lies in the image of~$A$, which we recall is spanned by~$v_0, \ldots, v_{m-1}$. We have
\begin{gather*}
  B w_0
=
  B \big( \cdots + v_{m-1} + (1-k) v_m \big)
=
  \cdots + (m + 1 - k) (1 - k) v_m
\tx{.}
\end{gather*}
In particular, $((m+1-k) - B) w_0$ lies in the span of~$v_0, \ldots, v_{m-1}$ as desired. We let~$\wtd{w}_0$ be a preimage under~$A$, and set~$w_1 = (w_0 + T \wtd{w}_0) \slash (m+1-k) \in W_1$. The assumption~$k \le 0$ or~$k-m > 1$ guarantees that~$m+1-k \ne 0$, i.e.~$w_1$ is well-defined.

We have
\begin{gather*}
  \Delta \frac{w_0 + T \wtd{w}_0}{m+1-k}
\equiv
  \frac{(A + T B) (w_0 + T \wtd{w}_0}{m+1-k}
\equiv
  \frac{T (B w_0 + A \wtd{w}_0)}{m+1-k}
\equiv
  T w_0
  \;\pmod{T^2}
\tx{.}
\end{gather*}
That is, $w_1$ is indeed a preimage of~$T w_0$ and thus a generalized eigenvector of exact depth~$1$.

Now by induction on~$d \ge 2$, we assume that the desired vector~$w_{d-1}$ exists. We recall that~$\Delta \in \End(W_{d-1})$ has eigenvalue~$0$ with multiplicity~$d$. We record that this in conjunction with the depth of~$w_{d-1}$ implies that the kernel of~$\Delta^{d-1} \in \End(W_{d-1})$ is contained in~$T W_{d-2}$.

We will now show that every~$v \in W_d$ with~$\Delta^d v = 0$ lies in~$T W_{d-1} \subset W_d$. By contraposition, we suppose that there were~$v \in W_d \setminus T W_{d-1}$ with~$\Delta^d v = 0$. Then~$v \,\pmod{T}$ in~$W_d \slash T W_{d-1}$, i.e., the $0$\thdash{} coefficient of~$v$ with respect to~$T$, is nonzero, but lies in the kernel of~$\Delta^d$. Since~$\Delta$ acts on~$W_d \slash T W_{d-1} \cong W_0$ as~$A$ and since~$\ker A = \ker A^d$ is spanned by~$w_0$, the $0$\thdash{} coefficient of~$v$ is a nonzero multiple of~$w_0$. That is, after rescaling~$v$ we can assume that
\begin{gather*}
  v
=
  (m + 1 - k) w_0 + \sum_{i = 1}^d T^i v_i
\tx{.}
\end{gather*}

We next consider~$v \,\pmod{T^2}$ in~$W_d \slash T^2 W_{d-2} \cong W_1$. Recall that the generalized eigenspace of~$\Delta \in \End(W_1)$ associated with the eigenvalue~$0$ is $2$-dimensional and spanned by the vectors~$T w_0$ and~$w_1$. By matching coefficients of~$v_0$, we thus find that~$v \equiv w_1 + T c w_0 \,\pmod{T^2}$ for some~$c \in \CC$, and~$\Delta v \equiv  T w_0 \,\pmod{T^2}$.

Since~$\Delta^d v = 0$ by assumption, we see that~$\Delta v \in T W_{d-1}$ lies in the kernel of~$\Delta^{d-1}$. We recorded before that the kernel of~$\Delta^{d-1}$ acting on~$W_{d-1}$ is contained in~$T W_{d-2}$. Since further~$T W_{d-1} \cong W_{d-1}$ as~$\CC[\Delta]$-modules, we conclude that~$\Delta v \in T^2 W_{d-2}$. This implies that~$\Delta v \equiv 0 \,\pmod{T^2}$, a contradiction

We have shown by contradiction that the kernel~$\Delta^d$ acting on~$W_d$ is contained in~$T W_{d-1}$ and thus has dimension~$d$ by the multiplicity statement for the eigenvalue~$0$ of~$\Delta$ acting on the associated graded module. By the same multiplicity statement and the fact that~$A$ has one-dimensional kernel on~$W_d \slash T W_{d-1}$, we conclude that there is a generalized eigenvector~$w_d \in W_d$ for~$\Delta$ of exact depth~$d+1$. Its image under~$\phi_d$ yields~$f_\rmL^{(d)}$. This finishes the proof in the case of~$f_\rmL$.

In the case of~$f_\rmR$, we follow the same pattern with
\begin{gather*}
  \phi_d :\,
  W_d
\lra
  \lspan \CC \big\{ f^{(d-t)}_{\rmR,r} \,:\, 0 \le r \le m, 0 \le t \le d \big\}
\tx{,}\;
  T^t v_r
\lmto
  \mfrac{1}{(d-t)!}\,
  f^{(d-t)}_{\rmR,r}
\end{gather*}
and~$\Delta = A + T B + T^2 C \equiv \phi^\ast_d \Delta_{k+m}$, where
\begin{align*}
  A v_r
&{}=
-\,
  \big( r(m-2r-k) + m \big)
  v_r
\,+\,
  r (r-1+k)
  v_{r-1}
\,-\,
  (r+1) (m-r)
  v_{r+1}
\tx{,}
\\
  B v_r
&{}=
  (1-k)
  v_r
\,+\,
  (1-k)
  v_{r-1}
\tx{,}
\\
  C v_r
&{}=
-\,
  v_r
\,-\,
  v_{r-1}
\tx{.}
\end{align*}

The image of~$A$ consists of all vectors~$v = \sum c_r v_r$ with vanishing alternating trace
\begin{gather*}
  \tdtr(v)
:=
  \sum (-1)^r c_r
\tx{.}
\end{gather*}
Proposition~\ref{prop:maass_form_change_weight_R} provides an element $w_0$ in the kernel of~$A$. We want to verify that~$w_0$ does not lie in the image of~$A$, that is, $\tdtr(w_0) \ne 0$. In the case~$k \le -m$, we have~$k < 0$ and~$\tdtr(w_0) \ne 0$ follows from the inspection of the expression for~$f_\rmR$ in Proposition~\ref{prop:maass_form_change_weight_R}, since the coefficients of~$w_0$ with respect to~$v_0,\ldots,v_{m+1}$ have alternating sign. We consider the case~$k > 1$, which implies~$m > -k$. Then we need to check that the following expression does not vanish:
\begin{gather*}
  \tdtr(w_0)
=
  \sum_{r = 0}^m \mfrac{(-1)^r}{(m-r)! (r+k-1)!}
\tx{.}
\end{gather*}
We multiply by~$(-1)^{k-1} (m+k-1)!$ and shift~$r$ by~$k-1$ to obtain a tail of the binomial expansion of~$0 = (1-1)^{m+k-1}$:
\begin{gather*}
  \sum_{r = k-1}^{m+k-1} \mbinom{m+k-1}{r} (-1)^r
=
  - \sum_{r = 0}^{k-1} \mbinom{m+k-1}{r} (-1)^r
\tx{.}
\end{gather*}
If~$2m \le m+k-1$, the summands on the left hand side have monotone absolute value, otherwise the ones on the right hand side do. We conclude that the sum is nonzero as desired by grouping successive terms.

To finish the proof, we show the existence of generalized eigenvectors~$w_d \in W_d$ as before. We can choose
\begin{gather*}
  w_1
=
  \frac{\tdtr(w_0) (w_0 - T \wtd{w}_0)}{(1-k) c_0}
\tx{,}
\end{gather*}
where~$\wtd{w}_0$ is a preimage under~$A$ of~$((1-k)c_0 \slash \tdtr(w_0) - B) w_0$ and~$w_0 = \sum c_r v_r$. The inductive proof of the existence of~$w_d$ for~$d \ge 2$ follows the same line as before.
\end{proof}

\begin{remark}
\label{rm:spectral_families_altering_weight}
The maps~$W_{d-1} \hra W_d$ and~$W_d \thra W_{d-1}$ can be used to iteratively determine the vectors~$w_d$. More specifically, $w_0$ is given by Propositions~\ref{prop:maass_form_change_weight_L} and~\ref{prop:maass_form_change_weight_R}. Further, for~$d \ge 1$ we have~$w_d \equiv w_{d-1} \,\pmod{T^d}$ and~$\Delta w_d$ lies in the span of~$T^t w_{d-t}$ for~$1 \le t \le d$. That is, we can set~$w_d = w_{d-1} + T^d v$ for some~$v \in V$ that is unique up to scalar multiples of~$w_0$, and determine~$A v$ uniquely from
\begin{gather*}
  \Delta w_{d-1} + T^d A v
\in
  \img \Delta
  \cap
  \lspan \CC\{ T^t w_{d-t} \,:\, 1 \le t \le d \}
\tx{.}
\end{gather*}
Note that in this step we use~$\img A \cap \ker A = \{0\}$.
\end{remark}

\section{Modular realizations}%
\label{sec:modular_realization}

We next provide modular realizations for each representation that we found in Section~\ref{ssec:classification_representation_labels} by providing the correct input to Corollary~\ref{cor:spectral_derivative_laplace_classical} and Theorem~\ref{thm:spectral_families_altering_weight}. Both require a spectral family~$f_s$. We primarily use spectral families that specialize at~$s = 0$ to the modular realizations provided by Bringmann--Kudla in the case of~$d = 0$. Case~IIId, which does not occur for~$d = 0$, can be constructed from Case~Ib.

Many of the examples in this section are products of functions that are modular co-variant. Note that by the remark following Definition~\ref{def:polyharmonic_maass_forms} both the modular co-variance and the growth condition on polyharmonic weak Maa\ss{} forms are compatible with products. Our application of Theorem~\ref{thm:spectral_families_altering_weight} then ensures that the products that we encounter in this section are also polyharmonic.

We calculated the examples in this section via an implementation of Remark~\ref{rm:spectral_families_altering_weight} in the computer algebra system Nemo~\cite{fieker-hart-hofmann-johansson-2017}. The code is available on the third named author's homepage. We use the notation
\begin{gather}
\begin{alignedat}{2}
  E_{k,s_0}^{(d)}(\tau)
&=
  \big( \partial_s^d E_k(\tau,s) \big)_{s=s_0}
\tx{,}\qquad
&
  E_k^{(d)}(\tau)
&=
  E_{k,0}^{(d)}(\tau)
\tx{,}\\
  P_{k,n,s_0}^{(d)}(\tau)
&=
  \big( \partial_s^d F_{k,n}(\tau,1-\tfrac{k}{2}-s) \big)_{s=s_0}
\tx{,}\qquad
&
  P_{k,n}^{(d)}(\tau)
&=
  P_{k,n,0}^{(d)}(\tau)
\tx{.}
\end{alignedat}
\end{gather}
The primary purpose of our implementation is to determine linear combinations of these functions that fall under Cases~Ia--IIId.

The cases in this section are labeled in a compatible way with Bringmann--Kudla; See Table~\ref{tab:label_translation} for a translation between these labels and the representation theoretic labels that we employed in Section~\ref{ssec:classification_representation_labels}.

\subsection{Case Ia}%
\label{sssec:modular_realization:Ia}

We provide a polyharmonic weak Maa\ss{} form~$f$ of exact depth~$d$ and weight~$k < 1$ with~$\rmL_k\, \Delta_k^d\, f = 0$ and~$\rmR_k^{1-k}\, \Delta_k^d\, f = 0$.

The case of~$k = 0$ and~$d = 0$ is classical: We have the modular form~$f(\tau) = 1$, which vanishes under~$\rmL_0$ and~$\rmR_0$. Bringmann and Kudla extended this to all~$k \le 0$ via a vector-valued construction, which also falls under the scope of Proposition~\ref{prop:maass_form_change_weight_L}. Specifically, we have a modular form~$\frake_{-k,0}$ of weight~$k \le 0$ that vanishes under~$\rmL_k$ and~$\rmR_k^{1-k}$ by~\eqref{eq:BKbasis_maass_operators}.

Theorem~\ref{thm:spectral_families_altering_weight} gives the existence of preimages of~$\frake_{-k,0}$ under~$\Delta_k^d$ when setting~$f_s =E_0(\,\cdot\,,s)$, $m=-k$, and~$k = 0$.

\begin{example}
\label{ex:modular_realization_Ia}
We obtain a modular realization~$f^{(2)}$ of this case in weight~$-3$ and depth~$2$ using pure Nemo code with~$f^{(0)} = \Delta_{-3}^2\, f^{(2)}$ given by
\begin{align*}
&\hphantom{{}={}}
  \tfrac{1}{72}\,
  \frake_{0,3}\,
  \rmL^3 E_0^{(2)}
  \,+\,
  \tfrac{1}{8}\,
  \frake_{1,2}\,
  \rmL^2 E_0^{(2)}
  \,+\,
  \tfrac{1}{2}\,
  \frake_{2,1}\,
  \rmL^1 E_0^{(2)}
  \,+\,
  \tfrac{1}{2}\,
  \frake_{3,0}\,
  E_0^{(2)}
\\
&\qquad
  +\,
  \tfrac{11}{216}\,
  \frake_{0,3}\,
  \rmL^3 E_0^{(1)}
  \,+\,
  \tfrac{3}{8}\,
  \frake_{1,2}\,
  \rmL^2 E_0^{(1)}
  \,+\,
  \frake_{2,1}\,
  \rmL^1 E_0^{(1)}
\\
&{}=
 \tfrac{1}{2}\,
 \frake_{3,0}\,
 E^{(2)}_{0,0}
 \,+\,
 \tfrac{1}{18}
 \frake_{0,3}\,
 E^{(1)}_{-6,3}
 \,+\,
 \tfrac{1}{4}
 \frake_{1,2}\,
 E^{(1)}_{-4,2}
 \,+\,
 \frake_{2,1}\,
 E^{(1)}_{-2,1}
\\
&\qquad
 +\,
 \tfrac{5}{27}\,
 \frake_{0,3}\,
 E^{(0)}_{-6,3}
 \,+\,
 \tfrac{5}{8}
 \frake_{1,2}\,
 E^{(0)}_{-4,2}
 \,+\,
 \frake_{2,1}\,
 E^{(0)}_{-2,1}
\tx{.}
\end{align*}
\end{example}

\subsection{Case Ib}%
\label{sssec:modular_realization:Ib}

We provide a polyharmonic weak Maa\ss{} form~$f$ of exact depth~$d$ and weight~$k < 1$ with~$\rmL_k\, \Delta_k^d\, f = 0$, and~$\rmR_k^{1-k}\, \Delta_k^d\, f \ne 0$.

Bringmann--Kudla realized Case~Ib for~$d = 0$ in terms of weakly holomorphic modular forms (excluding constants in weight~$k = 0$). Indeed, a weakly holomorphic modular form~$f$ with nonzero principal part behaves as required under~$\rmL_k$ and~$\rmR_k^{1-k}$. We can obtain further  modular realizations~$\frake_{m,0} f$ in depth~$0$ and weight~$k - m$ by Proposition~\ref{prop:maass_form_change_weight_L} for such $f$ and a non-negative integer~$m$. Indeed, we have~$\rmL\, \frake_{m,0} f = \frake_{m,0}\, \rmL f = 0$ and~$\rmR^{1-k+m}\, \frake_{m,0} f$ can be written as a linear combination~$\sum_r c_r \frake_{r,m-r} \rmR^{1-k-r} f$, $c_r \in \CC$, with nonzero~$c_0$. Since we have~$\frake_{0,m}\, \rmR^{1-k} f \ne 0$ by the Bol Identity, we conclude that~$\rmR^{1-k+m}\, \frake_{m,0} f \ne 0$.

Given such a weakly holomorphic modular form~$f$ there is a holomorphic family~$f_s$, $s\in\CC$, with~$f = f_0$ and~$\Delta\, f_s = s (1 - s - k)\, f_s$ after substituting~$1 - \frac{k}{2} - s$ for~$s$ in Proposition~\ref{prop:poincarespectral}. This family can be explicitly constructed via Poincar\'e series. The desired modular realization of Case~Ib therefore exists for all~$k \le 0$ by Corollary~\ref{cor:spectral_derivative_laplace_classical} applied to this family~$f_s$ and given~$k$. The vector-valued generalizations~$\frake_{m,0} f_s$ give rise to preimages under~$\Delta^d$ of~$\frake_{m,0} f$ by Theorem~\ref{thm:spectral_families_altering_weight} applied to~$f_s$, $k$, and~$m$.

\subsection{Case Ic}%
\label{sssec:modular_realization:Ic}

We provide a polyharmonic weak Maa\ss{} form~$f$ of exact depth~$d$ and weight~$k < 1$ with~$\rmL_k\, \Delta_k^d\, f \ne 0$, and~$\rmR_k^{1-k}\, \Delta_k^d\, f = 0$.

Recall the flipping operator from~\eqref{eq:def:flipping_operator}. Given a polyharmonic Maa\ss{} form of exact depth~$d$ and weight~$k$ that realizes Case~Ib, $\rmF_k\, f$ realizes Case~Ic in the same depth and weight. The depth of~$f$ and~$\rmF_k\,f$ coincides by the commutator relation in~\eqref{eq:flipping_laplace_and_involution}. We apply Equations~\eqref{eq:flipping_maass_bol} and~\eqref{eq:flipping_maass_low} to find that
\begin{gather*}
  \rmL_k\, \Delta_k^d\, \rmF_k\, f
=
  \rmL_k\, \rmF_k\, \Delta_k^d\, f
=
  \mfrac{y^{2-k}}{(-k)!}\, \ov{\rmR^{1-k}\,\Delta_k^d f}
\ne
  0
\end{gather*}
and likewise~$\rmR_k^{1-k}\, \Delta_k^d\, \rmF_k\, f = 0$.

\subsection{Case Id}%
\label{sssec:modular_realization:Id}

We provide a polyharmonic weak Maa\ss{} form~$f$ of exact depth~$d$ and weight~$k < 1$ with~$\rmL_k\, \Delta_k^d\, f \ne 0$, and~$\rmR_k^{1-k}\, \Delta_k^d\, f \ne 0$.

As in the work of Bringmann--Kudla, we can obtain this form from polyharmonic weak Maa\ss{} forms~$f_{\mathrm{Ib}}$ and~$f_{\mathrm{Ic}}$ that realize Cases~Ib and~Ic. Then~$f = f_{\mathrm{Ib}} + f_{\mathrm{Ic}}$ is harmonic of depth~$d$ and satisfies
\begin{alignat*}{3}
  \rmL_k \Delta_k^d\, f
&{}=
  \rmL_k \Delta_k^d\, f_{\mathrm{Ib}} + \rmL_k \Delta_k^d\, f_{\mathrm{Ic}}
&&{}=
  \rmL_k \Delta_k^d\, f_{\mathrm{Ic}}
&&{}\ne
  0
\tx{,}
\\
  \rmR_k \Delta_k^d\, f
&{}=
  \rmR_k \Delta_k^d\, f_{\mathrm{Ib}} + \rmR_k \Delta_k^d\, f_{\mathrm{Ic}}
&&{}=
  \rmR_k \Delta_k^d\, f_{\mathrm{Ib}}
&&{}\ne
  0
\tx{.}
\end{alignat*}

Another construction is given by Eisenstein series. Recall that~$E_k(\,\cdot\,,1-k)$ is harmonic. We have~$\rmL\, E_k(\,\cdot\,,1-k) = E_{k-2}(\,\cdot\,,-k) \ne 0$ and~$\rmR^{1-k}_k\, E_k(\,\cdot\,,1-k) = (1-k)! E_{2-k} \ne 0$ by \eqref{eq:eis:maass_operators}.

For any non-negative integer~$m$, Proposition~\ref{prop:maass_form_change_weight_L} applied to~$E_k(\,\cdot\,,1-k)$ yields further realizations in weight~$k - m$. If~$m \le -k$, then Proposition~\ref{prop:maass_form_change_weight_R} yields yet further realizations in weight~$k + m$. Note that this includes the choice~$m = -k$, which provides modular realizations in weight~$0$.

If~$k < 0$ we obtain modular realizations in weight~$k$ and depth~$d$ from Corollary~\ref{cor:spectral_derivative_laplace_classical} applied to~$E_k(\,\cdot\,,1-k-s)$. The vector-valued realizations in depth~$0$ and weight~$k \pm m$ arising from~$E_k(\,\cdot\,,1-k)$ via Propositions~\ref{prop:maass_form_change_weight_L} and~\ref{prop:maass_form_change_weight_R}, can be extended to higher depth by virtue of Theorem~\ref{thm:spectral_families_altering_weight}.

\begin{remark}
The exceptional role of~$k = 0$ in Case~Id is connected to the realization of Case~Ia via~$E_0(\,\cdot\,,0)$.
\end{remark}

\subsection{Case IIa}%
\label{sssec:modular_realization:IIa}

We provide a polyharmonic weak Maa\ss{} form~$f$ of exact depth~$d$ and weight~$k = 1$ with~$\rmL_k\, \Delta_k^d\, f = 0$.

In analogy with Case~Ib, weakly holomorphic modular forms provide modular realizations of Case~IIa in depth~$d = 0$. Corollary~\ref{cor:spectral_derivative_laplace_classical} in conjunction with Proposition~\ref{prop:poincarespectral} allows us to extend them to positive depth. More specifically, given a weakly holomorphic modular form~$f$ of weight~$k = 1$, as in Case~Ib Proposition~\ref{prop:poincarespectral} yields a spectral family~$f_s$ with~$f_0 = f$ and~$\Delta_1\,f_s = s(1-1-s) f_s = -s^2 f_s$. Note that as opposed to Cases~Ia--Id, only even iterated derivatives~$\big(\partial_s^{2d-2t}\, f_s\big)_{s=0}$, $0 \le t \le d$, of~$f_s$ occur when applying Corollary~\ref{cor:spectral_derivative_laplace_classical} to~$f_s$.

\subsection{Case IIb}%
\label{sssec:modular_realization:IIb}

We provide a polyharmonic weak Maa\ss{} form~$f$ of exact depth~$d$ and weight~$k = 1$ with~$\rmL_k\, \Delta_k^d\, f \ne 0$.

Bringmann--Kudla realized depth~$0$ of Case~IIb via incoherent Eisenstein series associated to prime fundamental discriminants~$-D < 0$ and the function
\begin{gather*}
  \phi_D^-\big(\begin{psmatrix} a & b \\ c & d \end{psmatrix}\big)
=
  -i \sqrt{D} \big(\mfrac{-D}{c}\big)
  \tx{, if\ } \gcd(D, c) = 1
\tx{;}\quad
  \phi_D^-\big(\begin{psmatrix} a & b \\ c & d \end{psmatrix}\big)
=
  \big(\mfrac{-D}{a}\big)
  \tx{, otherwise,}
\end{gather*}
where~$(-D \slash c)$ is the quadratic symbol. In the next definition, the normalization of~$s$ differs from the one in~\cite{BK} in order to obtain a spectral family that satisfies the usual eigenvalue equation~$\Delta_1 E^-_D(\tau,s) = s(1-1-s) E^-_D(\tau,s) = -s^2 E^-_D(\tau,s)$:
\begin{gather*}
  E^-_D(\tau,s)
:=
  \sum_{\ga \in \Ga_\infty \backslash \SL{2}(\ZZ)}
  \phi_D^-(\ga)\,
  y^s \big|_1 \ga
\tx{.}
\end{gather*}
Since~$E^-_D(\tau,s)$ vanishes at~$s = 0$, the next definition features the exponent~$d+1$ as opposed to~$d$. We set
\begin{gather*}
  E^{-(d)}_D(\tau)
:=
  \big( \partial_s^{d+1}\, E^-_D(\tau) \big)_{s=0}
\tx{.}
\end{gather*}

A modular realization for~$d = 0$ is given by~$E^{-(0)}_D(\tau)$. We can apply Corollary~\ref{cor:spectral_derivative_laplace_classical} to the spectral family~$E^-_D(\,\cdot\,,s) \slash s$ in weight~$k = 1$ to obtain modular realizations in positive depth~$d$ from linear combinations of~$E^{-(2d-2t)}_D$ for~$0 \le t \le d$.

\subsection{Case IIIa}%
\label{sssec:modular_realization:IIIa}

We provide a polyharmonic weak Maa\ss{} form~$f$ of exact depth~$d$ and weight~$k > 1$ with~$\rmL_k^k \Delta_k^{d-1}\, f \ne 0$, if~$d > 0$, and~$\rmL_k\, \Delta_k^d\, f = 0$.

A modular realization in depth~$0$ is provided by holomorphic modular forms. For general~$d$, let~$f$ be a modular realization of Case~Id in depth~$d > 0$ and weight~$2-k$. We claim that then~$\rmR^{k-1}_{2-k} \,f$ is a modular realization of Case~IIIa in depth~$d$ and weight~$k$. Indeed, by~\eqref{eq:laplace_commutators} and~\eqref{eq:maass_iterated_commutators} we have
\begin{alignat*}{2}
  \rmL_k\, \Delta_k^d\, \rmR_{2-k}^{k-1}\, f
&{}=
  \rmL_k\, \rmR_{2-k}^{k-1}\, \Delta_{2-k}^d\, f
&&{}=
  - \rmR_{2-k}^{k-2}\, \Delta_{2-k}^{d+1}\, f
=
  0
\tx{,}
\\
  \rmL_k^k\, \Delta_k^{d-1}\, \rmR_{2-k}^{k-1}\, f
&{}=
  \rmL_k^k\, \rmR_{2-k}^{k-1}\, \Delta_{2-k}^{d-1}\, f
&&{}=
  (k-1)! (k-2)!\,
  \rmL_{2-k}\,
  \Delta_{2-k}^d\, f
\ne
  0
\tx{.}
\end{alignat*}

\begin{remark}
A spectral deformation of holomorphic cusp forms can be achieved directly via Theorem~\ref{thm:spectral_families_altering_weight} applied to the Poincar\'e series in Theorem~3.1 and~3.4 of~\cite{fay-1977}.
\end{remark}

\subsection{Case IIIb}%
\label{sssec:modular_realization:IIIb}

We provide a polyharmonic weak Maa\ss{} form~$f$ of exact depth~$d$ and weight~$k > 1$ with~$\rmL_k \Delta^d\, f \ne 0$ and~$\rmL_k^k\, \Delta_k^d\, f = 0$.

A modular realization in depth~$0$ and weight~$2$ is given by the weight-$2$ Eisenstein series~$E_2$. We have~$\rmL\, E_2 = \frac{3}{\pi} \ne 0$ and~$\rmL^2\, E_2 = 0$ as required. For any positive integer~$m$ Proposition~\ref{prop:maass_form_change_weight_R} applied to~$E_2$ yields a modular realization in weight~$2 + m$, which has already appeared in the work of Bringmann--Kudla:
\begin{gather*}
  \sum_{r=0}^m
  \mfrac{1}{(m-r)! (r+1)!}\,
  \frake_{r,m-r} \rmR_2^r E_2
\tx{.}
\end{gather*}

We employ Theorem~\ref{thm:spectral_families_altering_weight} to the spectral family~$E_2(\,\cdot\,,s)$ of weight~$k = 2$ and any non-negative integer~$m$ to obtain modular realization of given depth~$d$ and weight~$2 + m$.

\subsection{Case IIIc}%
\label{sssec:modular_realization:IIIc}

We provide a polyharmonic weak Maa\ss{} form~$f$ of exact depth~$d$ and weight~$k > 1$ with~$\rmL_k^k\, \Delta_k^d\, f \ne 0$.

Bringmann--Kudla utilized Poincar\'e series to provide a modular realization. They fit, more generally, in the following framework. Let~$f$ be a modular realization of Case~Ic in depth~$d+1$ and weight~$2-k$. We claim that then~$\rmR^{k-1}_{2-k}\,f$ is a modular realization of Case~IIIc in depth~$d$ and weight~$k$. Similar to the treatment of Case~IIIa, we employ~\eqref{eq:laplace_commutators} and~\eqref{eq:maass_iterated_commutators} to find that if~$d > 0$ then
\begin{align*}
  \rmL_k^k\, \Delta_k^d\, \rmR_{2-k}^{k-1}\, f
&{}=
  \rmL_k^k\, \rmR_{2-k}^{k-1}\, \Delta_{2-k}^d\, f
=
  (k-1)! (k-2)!\,
  \rmL_k\,
  \Delta_{2-k}^{d+1}\, f
\ne
  0
\tx{,}
\\
  \Delta_k^{d+1}\, \rmR_{2-k}^{k-1}\, f
&{}=
  \rmR_{2-k}^{k-1}\, \Delta_{2-k}^{d+1}\, f
=
  0
\tx{.}
\end{align*}

\subsection{Case IIId}%
\label{sssec:modular_realization:IIId}

We provide a polyharmonic weak Maa\ss{} form~$f$ of exact depth~$d > 0$ and weight~$k > 1$ with~$\rmL^k\, \Delta^{d-1}\, f = 0$. Recall that this case does not appear in depth~$0$ and thus is absent from the classification of Bringmann--Kudla.

Let~$f$ be a modular realization of Case~Ib in depth~$d+1$ and weight~$2-k$. We claim that then~$\rmR^{k-1}_{2-k}\,f$ is a modular realization of Case~IIId in depth~$d$ and weight~$k$. Repeating the computation of Case~IIIc based on \eqref{eq:laplace_commutators} and~\eqref{eq:maass_iterated_commutators}, we confirm that
\begin{align*}
  \rmL_k^k\, \Delta_k^{d-1}\, \rmR_{2-k}^{k-1}\, f
&{}=
  \rmL_k^k\, \rmR_{2-k}^{k-1}\, \Delta_{2-k}^{d-1}\, f
=
  (k-1)! (k-2)!\,
  \rmL_{2-k}\,
  \Delta_{2-k}^d\, f
=
  0
\tx{,}
\\
  \Delta_k^d\, \rmR_{2-k}^{k-1}\, f
&{}=
  \rmR_{2-k}^{k-1}\, \Delta_{2-k}^d\, f
\ne
  0
\tx{.}
\end{align*}
Observe that the first equality also implies that~$\rmR_{2-k}^{k-1}\,f$ vanishes under~$\Delta_k^{d+1}$.

\smallskip
\noindent
\emph{Acknowledgement}. We thank the anonymous referee for a thorough read and his/her helpful suggestions and remarks.


\ifbool{nobiblatex}{
  \sloppy
  \bibliographystyle{alpha}
  \bibliography{bibliography.bib}
}{%
  \sloppy
  \Needspace*{4em}
  \printbibliography[heading=none]
}


\Needspace*{3\baselineskip}
\noindent
\rule{\textwidth}{0.15em}

{\noindent\small
Universität Bielefeld, Fakultät für Mathematik, Postfach 100 131, 33501 Bielefeld, Germany\\
E-Mail: \url{alfes@math.uni-bielefeld.de}
}\vspace{.5\baselineskip}

{\noindent\small
Universit\"at Paderborn,
Mathematisches Institut,
Warburger Stra\ss{}e 100,
33098 Paderborn,
Germany\\
E-mail: \url{burban@math.uni-paderborn.de}
}\vspace{.5\baselineskip}

{\noindent\small
Chalmers tekniska högskola och G\"oteborgs Universitet,
Institutionen f\"or Matematiska vetenskaper,
SE-412 96 G\"oteborg, Sweden\\
E-mail: \url{martin@raum-brothers.eu}\\
Homepage: \url{https://martin.raum-brothers.eu}
}


\ifdraft{%
\listoftodos%
}

\end{document}
